\numberwithin{equation}{section}
\numberwithin{figure}{section}
\theoremstyle{plain}
\newtheorem{thm}{Theorem}[section]
  \theoremstyle{definition}
  \newtheorem{defn}[thm]{Definition}
  \theoremstyle{plain}
  \newtheorem{prop}[thm]{Proposition}
 \theoremstyle{definition}
  \newtheorem{example}[thm]{Example}
  \theoremstyle{remark}
  \newtheorem{rem}[thm]{Remark}
  \theoremstyle{plain}
  \newtheorem{cor}[thm]{Corollary}
  \theoremstyle{plain}
  \newtheorem{lem}[thm]{Lemma}
  \theoremstyle{remark}
  \newtheorem*{acknowledgement*}{Acknowledgement}
\newcommand{\gph}{\mbox{\rm gph}}
\newcommand{\dom}{\mbox{\rm dom}}
\newcommand{\conv}{\mbox{\rm conv}}
\newcommand{\lip}{\mbox{\rm lip}}
\newcommand{\cl}{\mbox{\rm cl}}
\newcommand{\calm}{\mbox{\rm clm}}
\title[Generalized differentiation with positively homogeneous maps]{Generalized differentiation with \\positively homogeneous maps: Applications \\in set-valued analysis and metric regularity}
\begin{document}

\author{C.H. Jeffrey Pang}

\curraddr{Massachusetts Institute of Technology, Department of Mathematics,
2-334, 77 Massachusetts Avenue, Cambridge MA 02139-4307.}

\email{chj2pang@mit.edu}

\date{\today}
\begin{abstract}
We propose a new concept of generalized differentiation of set-valued
maps that captures first order information. This concept encompasses
the standard notions of Fr\'{e}chet differentiability, strict differentiability,
calmness and Lipschitz continuity in single-valued maps, and the Aubin
property and Lipschitz continuity in set-valued maps. We present calculus
rules, sharpen the relationship between the Aubin property and coderivatives,
and study how metric regularity and open covering can be refined to
have a directional property similar to our concept of generalized
differentiation. Finally, we discuss the relationship between the
robust form of generalized differentiation and its one sided counterpart.
\end{abstract}

\keywords{multi-function differentiability, metric regularity, coderivatives,
calmness, Lipschitz continuity.}

\subjclass[2000]{26E25, 46G05, 46T20, 47H04, 49J50, 49J52, 49J53, 54C05, 54C50, 54C60,
58C06, 58C07, 58C20, 58C25, 90C31\\
III\emph{OR/MS classification words}: Mathematics (Functions, Sets)}

\maketitle
\tableofcontents{}

\section{Introduction}

We say that $S$ is a \emph{set-valued map }(or a \emph{multi-valued
function}, or \emph{multifunction}) if for all $x\in X$, $S(x)$
is a subset of $Y$, and we denote set-valued maps by $S:X\rightrightarrows Y$.
Many problems of feasibility, control, optimality and equilibrium
are set-valued in nature, and are best treated with methods in set-valued
analysis. The texts \cite{AF90,Beer93,KT84} contain much of the theory
of set-valued analysis. Set-valued analysis serves as a foundation
for the theory of differential inclusions \cite{AC84,Aub91}, control
theory \cite{Fran88} and variational analysis \cite{RW98,Mor06,DR09},
which in turn have many applications in applied mathematics. We refer
to these texts for the abundant bibliography on the history of set-valued
analysis.

The main contribution of this paper is to introduce a new concept
of generalized differentiation (Definitions \ref{def:set-valued-diff}
and \ref{def:pseudo-T-diff}) using positively homogeneous maps. Any
reasonable definition of a derivative for set-valued maps has to describe
changes in the set in terms of the input variables. Using the Pompieu-Hausdorff
distance (a metric on the space of nonempty compact sets), one obtains
the classical definition of Lipschitz continuity of set-valued maps.
The concept introduced in this paper provides a more precise tool
that incoporates first order information in a set-valued map, encompassing
the standard notions of Fr\'{e}chet differentiability, strict differentiability,
calmness and Lipschitz continuity in single-valued maps, and the Aubin
property and Lipschitz continuity in set-valued maps. We illustrate
how this new concept relates to, and extends, existing methods in
variational and set-valued analysis. To motivate our discussion,
we revisit the relation between the Clarke subdifferential and Clarke
Jacobian and the nonsmooth behavior of functions that can be traced
back to \cite{Iof81}.

Other than the first four sections which provide the necessary background
for the rest of the paper, the last four sections can be read in any
order. In Section \ref{sec:single-valued-diff}, we recall generalized
differentiation for single-valued functions, which was first proposed
by Ioffe \cite{Ioffe79,Iof81}. We relate the generalized derivative
to common notions in classical and variational analysis, paying particular
attention to the Clarke subdifferential and the Clarke Jacobian. In
 Section \ref{sec:set-valued-diff}, we define generalized differentiation
for set-valued maps, and illustrate the lack of relation between our
generalized derivatives and the notions of set-valued derivatives
based on the tangent cones, namely semidifferentiability \cite{Penot84}
and proto-differentiability \cite{Roc89}. We present calculus rules
in Section \ref{sec:Calculus}.

The Aubin property (see Definition \ref{def:calm-aubin}), which is
commonly attributed to \cite{Aub84}, is a method of analyzing local
Lipschitz continuity of set-valued maps. In Section \ref{sec:Boris-extended},
we revisit the classical relationship between the Aubin property and
the coderivatives of a set-valued map. This relationship is referred
to as the Mordukhovich criterion in \cite{RW98}. Since the coderivatives
of a set-valued map can be calculated in many applications and enjoy
an effective calculus, this relationship is an important tool in the
study of the Lipschitz properties of set-valued maps. We will show
that the coderivatives actually give more information on the local
Lipschitz continuity property in our language of generalized derivatives.

It is well known that the Aubin property is related to metric regularity
and open covering \cite{BZ88,Mor93,Penot89}. Open covering is sometimes
known as linear openness. Metric regularity is important in the analysis
of solutions to $\bar{y}\in S(\bar{x})$, while open covering studies
local covering properties of a set-valued map. Both metric regularity
and open covering can be viewed as a study of set-valued maps whose
inverse has the Aubin property. For more on metric regularity, we
refer the reader to \cite{RW98,Mor06,DR04,Iof00,KK02}. In Section
\ref{sec:extended-metric-regularity}, we take a new look at metric
regularity and open covering in view of our definitions of the generalized
derivatives. We study metric regularity and open covering in a much
broader framework, illustrating that a directional behavior similar
to that in our definition of generalized derivatives is present in
metric regularity and open covering.

In Section \ref{sec:Strict--T-from-T}, we discuss how the (basic
and strict) generalized derivatives defined in Sections \ref{sec:single-valued-diff}
and \ref{sec:set-valued-diff} relate to each other. As particular
cases, we obtain an equivalent criterion for strict differentiability
of set-valued maps, and a relationship between calmness and Lipschitz
continuity in both single-valued and set-valued maps. As far as we
are aware, the relation between calmness and Lipschitz continuity
in set-valued maps was first discussed in \cite{Li94,Rob07}.

\section{Preliminaries and notation}

Throughout this paper, we shall assume that $X$ and $Y$ are Banach
spaces. In most cases, we follow the notation of \cite{RW98}. Given
two sets $A,B\subset X$, the notation $A+B$ stands for the \emph{Minkowski
sum} of two sets, defined by\[
A+B:=\{a+b\mid a\in A,b\in B\}.\]
The notation $A-B$ is interpreted as $A+(-B)$. We use $\left\langle \cdot,\cdot\right\rangle :X^{*}\times X\rightarrow\mathbb{R}$,
where $\left\langle \zeta,x\right\rangle :=\zeta(x)$, to denote the
usual dual relation. In Hilbert spaces (and hence in $\mathbb{R}^{n}$),
$\left\langle \cdot,\cdot\right\rangle $ reduces to the usual inner
product. The notation $x\xrightarrow[D]{}\bar{x}$ means that we take
sequences $\{x_{i}\}_{i=1}^{\infty}\subset D$ such that $\lim_{i\to\infty}x_{i}=\bar{x}$.
The closed ball with center $x$ and radius $r$ is denoted by $\mathbb{B}(x,r)$,
while $\mathbb{B}$ denotes the closed unit ball. 

We say that the set-valued map $S:X\rightrightarrows Y$ is \emph{closed-valued
}if $S(x)$ is closed for all $x\in X$, and it is \emph{convex-valued}
if $S(x)$ is convex for all $x\in X$. A \emph{closed }set-valued
map is a map whose graph is closed. We say that $C\subset X$ is a
\emph{cone }if $\mathbf{0}\in C$ and $\lambda x\in C$ for all $\lambda>0$
and $x\in C$. 

The \emph{graph} of a set-valued map $\gph(S)\subset X\times Y$ is
the set ${\{(x,y)\mid y\in S(x)\}}$. The set-valued map $S^{-1}:Y\rightrightarrows X$
is defined by $S^{-1}(y):=\{x\mid y\in S(x)\}$, and $\gph(S^{-1})=\{(y,x)\mid(x,y)\in\gph(S)\}$.
\begin{defn}
A set-valued map $T:X\rightrightarrows Y$ is \emph{positively homogeneous}
if \[
T(\mathbf{0})\mbox{ is a cone, and }T(kw)=kT(w)\mbox{ for all }k>0\mbox{ and }w\in X.\]

\end{defn}
It is clear that $T$ is positively homogeneous if and only if $\gph(T)$
is a cone. If $T_{1},T_{2}:X\rightrightarrows Y$ are two set-valued
maps such that $T_{1}(w)\subset T_{2}(w)$ for all $w\in X$, then
we write this property as $T_{1}\subset T_{2}$. We denote the set-valued
map $T(-\cdot):X\rightrightarrows Y$ to be $T(-\cdot)(w):=T(-w)$. 

We recall the definition of inner limits of a set valued map.\emph{}
\begin{defn}
When $S:X\rightrightarrows Y$ is a set-valued map, we say that\[
\limsup_{x\to\bar{x}}S(x):=\{y\in Y\mid\liminf_{x\xrightarrow{}\bar{x}}d(y,S(x))=0\}\]
is the \emph{outer limit} of $S$ at $\bar{x}$ and \[
\liminf_{x\rightarrow\bar{x}}S(x):=\{y\in Y\mid\lim_{x\xrightarrow{}\bar{x}}d(y,S(x))=0\}\]
is the \emph{inner limit }of $S$ at $\bar{x}$. When the outer and
inner limits coincide, it is called the \emph{limit}.
\end{defn}
The above definition of limits are equivalent to those in \cite{RW98}
by \cite[Exercise 4.2]{RW98}. We recall the definitions of outer
and inner semicontinuity.
\begin{defn}
\label{def:osc-isc-cty}For a closed-valued mapping $S:X\rightrightarrows Y$
and a point $\bar{x}\in X$:
\begin{enumerate}
\item $S$ is \emph{upper semicontinuous} \emph{at $\bar{x}$} if for any
open set $U$ such that $S(\bar{x})\subset U$, there is a neighborhood
$V$ of $\bar{x}$ such that $S(x)\subset U$ for all $x\in V$.
\item $S$ is\emph{ outer semicontinuous} \emph{at $\bar{x}$} if for any
open set $U$ such that $S(\bar{x})\subset U$ and $\rho>0$, there
is a neighborhood $V$ of $\bar{x}$ such that $S(x)\cap\rho\mathbb{B}\subset U$
for all $x\in V$. 
\item $S$ is \emph{inner semicontinuous at $\bar{x}$ }if $S(\bar{x})\subset\liminf_{x\rightarrow\bar{x}}S(x)$.
\item $S$ is \emph{continuous at $\bar{x}$ }if it is both outer and inner
semicontinuous there.
\end{enumerate}
\end{defn}
We caution that the terminology used to denote upper semicontinuity
and outer semicontinuity is not consistent in the literature. We choose
to define outer semicontinuity in this manner because this is the
property of outer semicontinuity that we will use in the proofs of
the chain rule in Theorem \ref{thm:Chain-rule} and in the proofs
in Section \ref{sec:Strict--T-from-T}. A set-valued map that is not
closed-valued can still satisfy the condition for outer semicontinuity
in (2). For example, consider $S:\mathbb{R}\rightrightarrows\mathbb{R}$
at $0$, where $S$ is defined by \[
S(x):=\begin{cases}
(-1,1) & \mbox{if }x=0\\
0 & \mbox{otherwise.}\end{cases}\]
In finite dimensions, outer semicontinuity is equivalent to the notation
in \cite{RW98} through \cite[Proposition 5.12]{RW98} and the result
below.
\begin{prop}
For a closed-valued mapping $S:X\rightrightarrows\mathbb{R}^{m}$
and a point $\bar{x}\in X$, $S$ is outer semicontinuous at $\bar{x}$
if and only if either of the following equivalent conditions hold:
\begin{enumerate}
\item [(2$^*$)]$\limsup_{x\to\bar{x}}S(x)\subset S(\bar{x})$.
\item [(2$^\prime$)] For any $\epsilon>0$ and $\rho>0$, there is a neighborhood
$V$ of $\bar{x}$ such that \[
S(x)\cap\rho\mathbb{B}\subset S(\bar{x})+\epsilon\mathbb{B}\mbox{ for all }x\in V.\]

\end{enumerate}
\end{prop}
\begin{proof}
The equivalence of $(2)$ (in Definition \ref{def:osc-isc-cty}) and
$(2^{*})$ mimics the proof of \cite[Proposition 5.12]{RW98}. (Note
that the proof may not be extended to the case where $S:X\rightrightarrows Y$
and $Y$ is infinite dimensional since it relies on the compactness
of the closed unit ball in $Y$.)

The implication $(2)\Rightarrow(2^{\prime})$ is straightforward,
so we prove the opposite direction. Suppose that an open set $U$
is such that $S(\bar{x})\subset U$ and $\rho$ is chosen arbitrarily.
For each $y\in S(\bar{x})\cap(\rho+1)\mathbb{B}$, the value $\epsilon_{y}:=\sup\{\epsilon\mid\mathbb{B}(y,\epsilon)\subset U\}$
is positive. We prove that $\bar{\epsilon}:=\inf\{\epsilon_{y}\mid y\in S(\bar{x})\cap(\rho+1)\mathbb{B})\}>0$.
Suppose otherwise. Then there is a sequence $\{y_{i}\}\subset S(\bar{x})\cap(\rho+1)\mathbb{B}$
such that $\epsilon_{y_{i}}\to0$. By the compactness of $S(\bar{x})\cap(\rho+1)\mathbb{B}$,
we can assume, by taking a subsequence if necessary, that $y_{i}\to\bar{y}\in S(\bar{x})\cap(\rho+1)\mathbb{B}$.
But $\epsilon_{\bar{y}}>0$ contradicts $\epsilon_{y_{i}}\to0$. This
implies that $[S(\bar{x})\cap(\rho+1)\mathbb{B}]+\bar{\epsilon}\mathbb{B}\subset U$.
We may reduce $\bar{\epsilon}$ so that $\bar{\epsilon}<1$. 

By assumption $(2^{\prime})$, for our choice of $\rho$ and $\bar{\epsilon}$,
we can find a neighborhood $V$ of $\bar{x}$ such that \begin{equation}
S(x)\cap\rho\mathbb{B}\subset S(\bar{x})+\bar{\epsilon}\mathbb{B}\mbox{ for all }x\in V.\label{eq:osc-cond}\end{equation}
Then \eqref{eq:osc-cond} implies $S(x)\cap\rho\mathbb{B}\subset[S(\bar{x})\cap(\rho+1)\mathbb{B}]+\bar{\epsilon}\mathbb{B}\subset U$
for all $x\in V$, which proves what we need. 
\end{proof}
Outer semicontinuity is better suited to handle set-valued maps with
unbounded value sets $S(x)$. For example, the set-valued map $S:\mathbb{R}\rightrightarrows\mathbb{R}^{2}$
defined by \[
S(\theta):=\{(t\cos\theta,t\sin\theta)\mid t\geq0\}\]
(see \cite[Figure 5-7]{RW98} or \cite[Page 27]{BI08}) is not upper
semicontinuous anywhere but is outer semicontinuous, and in fact continuous,
everywhere. When $S(\bar{x})$ is bounded, upper and outer semicontinuity
are equivalent. We will not use upper semicontinuity in this paper.

To simplify the notation, given any map $T:X\rightrightarrows Y$
and constant $\delta>0$, denote $(T+\delta):X\rightrightarrows Y$
to be the map \[
(T+\delta)(w):=T(w)+\delta|w|\mathbb{B}.\]

\section{\label{sec:single-valued-diff}Generalized differentiability of single-valued
maps}

The emphasis of this section is the generalized differentiability
of single-valued maps $f:X\rightarrow Y$. Much of the theory is already
in \cite{Iof81}, but we concentrate on the key results that we will
extend for the set-valued case in later sections. We now begin with
our first definition of generalized differentiability. 
\begin{defn}
($T$-differentiability) \label{def:single-valued-T-diff}Let $T:X\rightrightarrows Y$
be a positively homogeneous set-valued map. We say that $f:X\rightarrow Y$
is\emph{ $T$-differentiable} at $\bar{x}$ if for any $\delta>0$,
there exists a neighborhood $V$ of $\bar{x}$ such that \[
f(x)\in f(\bar{x})+T(x-\bar{x})+\delta|x-\bar{x}|\mathbb{B}\mbox{ for all }x\in V.\]
We say that $f:X\rightarrow Y$ is \emph{strictly $T$-differentiable}
at $\bar{x}$ if for any $\delta>0$, there exists a neighborhood
$V$ of $\bar{x}$ such that \[
f(x)\in f(x^{\prime})+T(x-x^{\prime})+\delta|x-x^{\prime}|\mathbb{B}\mbox{ for all }x,x^{\prime}\in V.\]

\end{defn}
The map $T:X\rightrightarrows Y$ is referred to as a \emph{prederivative}
when $f$ is $T$-differentiable, and as the \emph{strict prederivative}
when $f$ is strictly $T$-differentiable in \cite[Definition 9.1]{Iof81}.
The definition of $T$-differentiability includes the familiar concepts
of differentiability and Lipschitz continuity as special cases.
\begin{example}
(Examples of $T$-differentiability) Let $f:X\rightarrow Y$ be a
single-valued map.
\begin{enumerate}
\item When $T:X\rightarrow Y$ is a (single-valued) continuous linear map,
$T$-differentiability is precisely Fr\'{e}chet differentiability
with derivative $T$, and strict $T$-dif\-fer\-en\-ti\-a\-bil\-i\-ty
is precisely strict differentiability with derivative $T$.
\item When $T:X\rightrightarrows Y$ is defined by $T(w)=\kappa|w|\mathbb{B}$,
$T$-differentiability is precisely calmness with modulus $\kappa$
(i.e., $|f(x)-f(\bar{x})|\leq\kappa|x-\bar{x}|+o(|x-\bar{x}|)$),
and strict $T$-differentiability is precisely local Lipschitz continuity
with modulus $\kappa$.
\end{enumerate}
\end{example}
Strict $T$-differentiability is more robust than $T$-differentiability.
The function $f:\mathbb{R}\rightarrow\mathbb{R}$ defined by \[
f(x):=\begin{cases}
x^{2}\sin(x^{-2}) & \mbox{if }x\neq0\\
0 & \mbox{if }x=0\end{cases}\]
 is Fr\'{e}chet differentiable at $0$ but not Lipschitz there. 

With the right choice of $T$, $T$-differentiability can also handle
inequalities.
\begin{example}
(Inequalities with $T$-differentiability) Let $f:X\rightarrow\mathbb{R}$
be a single-valued map.\end{example}
\begin{enumerate}
\item $f:X\rightarrow\mathbb{R}$ is \emph{calm from below} at $\bar{x}$
with modulus $\kappa$, i.e., $f(x)\geq f(\bar{x})-\kappa|x-\bar{x}|+o(|x-\bar{x}|)$,
if and only if $f$ is $T$-differentiable there, where $T:X\rightrightarrows\mathbb{R}$
is defined by $T(w)=[-\kappa|w|,\infty)$.
\item The vector $v\in X^{*}$ is a \emph{Fr\'{e}chet subgradient} of $f$
at $\bar{x}$ if and only if $f$ is $T$-differentiable there, where
$T:X\rightrightarrows\mathbb{R}$ is defined by $T(w)=[\left\langle v,w\right\rangle ,\infty)$.
\item The convex set $C\subset X^{*}$ is a subset of the \emph{Fr\'{e}chet
subdifferential} of $f$ at $\bar{x}$ if and only if $f$ is $T$-differentiable
there, where $T:X\rightrightarrows\mathbb{R}$ is defined by $T(w)=[\sup_{v\in C}\left\langle v,w\right\rangle ,\infty)$.
\end{enumerate}
It is clear from the definitions that if $f$ is $T_{1}$-differentiable
at $\bar{x}$, and $T_{2}$ satisfies $T_{2}\supset T_{1}$, then
$f$ is $T_{2}$-differentiable at $\bar{x}$ as well. 

At this point, we mention connections to other notions of generalized
derivatives for single-valued functions close to the definition of
(strict) $T$-dif\-fer\-en\-ti\-a\-bil\-i\-ty. Semidifferentiability,
as is recorded in \cite[Definition 7.20]{RW98}, can be traced back
to \cite{Penot79}, and is equivalent to the case where $T:X\rightrightarrows Y$
is continuous and single-valued (see \cite[Section 9D]{RW98}). Semidifferentiability
for single-valued maps is not to be confused with semidifferentiability
for set-valued maps defined in Definition \ref{def:set-valued-semidiff}. 

We now look at a slightly nontrivial example involving the Clarke
subdifferential \cite{Cla75}. 
\begin{defn}
\cite[Section 2.1]{Cla83} \label{def:Clarke-subdiff}(Clarke subdifferential)
Let $X$ be a Banach space. Suppose $f:X\rightarrow\mathbb{R}$ is
locally Lipschitz around $\bar{x}$. The \emph{Clarke generalized
directional derivative} of $f$ at $\bar{x}$ in the direction $v\in X$
is defined by\[
f^{\circ}(\bar{x};v)=\limsup_{t\searrow0,x\rightarrow\bar{x}}\frac{f(x+tv)-f(x)}{t},\]
where $x\in X$ and $t$ is a positive scalar. The \emph{Clarke subdifferential}
(or \emph{generalized subdifferential}) of $f$ at $\bar{x}$, denoted
by $\partial_{C}f(\bar{x})$, is the convex subset of the dual space
$X^{*}$ given by\[
\{\zeta\in X^{*}\mid f^{\circ}(\bar{x};v)\geq\left\langle \zeta,v\right\rangle \mbox{ for all }v\in X\}.\]

\end{defn}
The Clarke subdifferential enjoys a mean value theorem. For $C\subset X^{*}$,
define the set $\left\langle C,w\right\rangle $ by $\left\{ \left\langle c,w\right\rangle \mid c\in C\right\} $.
The following result is due to Lebourg \cite{Leb75}, which has since
been generalized in other ways. We will recall other subdifferentials
in Definition \ref{def:more-subdif}.
\begin{thm}
\label{thm:Lebourg-MVT}\cite{Leb75} (Nonsmooth mean value theorem)
Suppose $x_{1},x_{2}\in X$ and $f$ is Lipschitz on an open set containing
the line segment $[x_{1},x_{2}]$. Then there exists a point $u\in(x_{1},x_{2})$
such that \[
f(x_{2})-f(x_{1})\in\left\langle \partial_{C}f(u),x_{2}-x_{1}\right\rangle .\]

\end{thm}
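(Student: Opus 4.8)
The plan is to prove Lebourg's nonsmooth mean value theorem by reducing the vector-valued difference $f(x_2)-f(x_1)$ to a scalar comparison along the segment, then applying the classical (smooth) mean value theorem together with the definition of the Clarke subdifferential. First I would introduce the auxiliary scalar function $g:[0,1]\to\mathbb{R}$ defined by $g(t):=f\bigl(x_1+t(x_2-x_1)\bigr)$. Since $f$ is Lipschitz on an open set containing $[x_1,x_2]$, the function $g$ is Lipschitz on $[0,1]$, hence absolutely continuous, and in particular continuous on $[0,1]$. The endpoint values are $g(0)=f(x_1)$ and $g(1)=f(x_2)$, so the quantity I want to control is exactly $g(1)-g(0)=f(x_2)-f(x_1)$.

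The key step is to apply a mean value theorem to $g$ and then translate the derivative information back into a statement about $\partial_C f$. For the scalar Lipschitz function $g$ one has $g(1)-g(0)\in\partial_C g(s)$ for some $s\in(0,1)$, which is the one-dimensional instance of the result (provable directly: apply the classical mean value theorem to the smooth function $h(t):=g(t)-t\bigl(g(1)-g(0)\bigr)$, which satisfies $h(0)=h(1)$, to locate an interior point where $0\in\partial_C h(s)$, and then use the scalar sum rule $\partial_C h(s)\subset\partial_C g(s)-(g(1)-g(0))$). Setting $u:=x_1+s(x_2-x_1)\in(x_1,x_2)$, it remains to relate $\partial_C g(s)$ to $\langle\partial_C f(u),x_2-x_1\rangle$. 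The chain-rule inclusion
\[
\partial_C g(s)\subset\bigl\langle\partial_C f(u),x_2-x_1\bigr\rangle
\]
follows from the definition of the Clarke directional derivative in Definition \ref{def:Clarke-subdiff}: one checks $g^{\circ}(s;1)\le f^{\circ}(u;x_2-x_1)$ and $g^{\circ}(s;-1)\le f^{\circ}(u;-(x_2-x_1))$ by comparing the defining limsups, which forces any $\xi\in\partial_C g(s)$ to be of the form $\langle\zeta,x_2-x_1\rangle$ for some $\zeta\in\partial_C f(u)$, using that $\partial_C f(u)$ is a nonempty convex weak$^*$-compact set and that $\langle\partial_C f(u),x_2-x_1\rangle$ is therefore a closed interval $[-f^{\circ}(u;-(x_2-x_1)),\,f^{\circ}(u;x_2-x_1)]$.

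Chaining these inclusions gives $f(x_2)-f(x_1)=g(1)-g(0)\in\partial_C g(s)\subset\langle\partial_C f(u),x_2-x_1\rangle$, which is the desired conclusion.

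The hard part will be the chain-rule inclusion $\partial_C g(s)\subset\langle\partial_C f(u),x_2-x_1\rangle$: one must carefully compare the limsup defining $g^{\circ}(s;\pm1)$ (where the inner perturbation $x$ ranges only over the line through $u$) with the limsup defining $f^{\circ}(u;\pm(x_2-x_1))$ (where $x$ ranges over a full neighborhood in $X$), and verify the direction of the inequality. The comparison is favorable precisely because restricting the perturbation variable to a smaller set can only decrease a limsup, so $g^{\circ}(s;1)\le f^{\circ}(u;x_2-x_1)$ holds; the genuine subtlety is then extracting a single $\zeta\in\partial_C f(u)$ realizing the scalar value, which I would handle via the convexity and weak$^*$-compactness of $\partial_C f(u)$ together with the support-function identity $f^{\circ}(u;v)=\max\{\langle\zeta,v\rangle\mid\zeta\in\partial_C f(u)\}$ that is implicit in Definition \ref{def:Clarke-subdiff}. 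In infinite dimensions this support-function identity and the weak$^*$-compactness must be invoked with care, and that is where the proof requires the most attention.
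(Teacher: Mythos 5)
The paper itself offers no proof of this theorem --- it is quoted directly from \cite{Leb75} --- so there is nothing internal to compare against; your proposal reconstructs the standard Lebourg--Clarke argument, and in outline it is correct: restrict $f$ to the segment via $g(t)=f\bigl(x_1+t(x_2-x_1)\bigr)$, get $g(1)-g(0)\in\partial_{C}g(s)$ for some $s\in(0,1)$, and push this through the chain-rule inclusion $\partial_{C}g(s)\subset\langle\partial_{C}f(u),x_2-x_1\rangle$, which you correctly justify by the one-sided comparisons $g^{\circ}(s;\pm1)\le f^{\circ}(u;\pm(x_2-x_1))$ (a limsup over base points restricted to the line is at most the limsup over a full neighborhood) together with the interval descriptions $\partial_{C}g(s)=[-g^{\circ}(s;-1),g^{\circ}(s;1)]$ and $\langle\partial_{C}f(u),x_2-x_1\rangle=[-f^{\circ}(u;-(x_2-x_1)),f^{\circ}(u;x_2-x_1)]$, the latter resting on the support-function identity and weak$^{*}$-compactness, exactly the facts the paper itself invokes in the theorem that follows this one. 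The one step you must repair is the parenthetical justification of the scalar case: $h(t)=g(t)-t\bigl(g(1)-g(0)\bigr)$ is \emph{not} smooth (it differs from the merely Lipschitz $g$ by a linear function), so the classical mean value theorem cannot be ``applied to $h$.'' What you actually need --- and what your stated conclusion $0\in\partial_{C}h(s)$ already presupposes --- is the nonsmooth Rolle argument: $h$ is continuous with $h(0)=h(1)$, hence either it is constant or it attains a maximum or minimum at some interior point $s\in(0,1)$, and at an interior local extremum of a Lipschitz function Fermat's stationarity rule for the Clarke subdifferential gives $0\in\partial_{C}h(s)$; the exact sum rule $\partial_{C}h(s)=\partial_{C}g(s)-(g(1)-g(0))$, valid because the perturbation $t\mapsto -t\bigl(g(1)-g(0)\bigr)$ is $C^{1}$, then yields $g(1)-g(0)\in\partial_{C}g(s)$. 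With that substitution the proof is complete and is, in substance, Lebourg's original one.
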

We show how the Clarke subdifferential relates to $T$-dif\-fer\-en\-ti\-a\-bil\-i\-ty.
For extensions and a more general treatment of the following result,
we refer the reader to \cite[Sections 9, 10]{Iof81}.
\begin{thm}
(Clarke subdifferential and $T$-differentiability) Let $f:X\rightarrow\mathbb{R}$
be a Lipschitz function, and $C$ be a nonempty weak{*}-compact convex
subset of $X^{*}$. If $f$ is strictly $T$-differentiable at $\bar{x}$,
where $T:X\rightrightarrows\mathbb{R}$ is defined by $T(w)=\left\langle C,w\right\rangle $,
then $\partial_{C}f(\bar{x})\subset C$. The converse holds if the
map $x\mapsto\partial_{C}f(x)$ is outer semicontinuous at $\bar{x}$,
which is the case when $X=\mathbb{R}^{n}$.\end{thm}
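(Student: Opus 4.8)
The plan is to translate both the strict $T$-differentiability condition and the inclusion $\partial_C f(\bar{x})\subset C$ into statements about support functions, and then connect them using the Clarke directional derivative for the forward implication and Lebourg's mean value theorem (Theorem \ref{thm:Lebourg-MVT}) for the converse. Write $\sigma_C(w):=\sup_{c\in C}\langle c,w\rangle$ for the support function of $C$. Since $c\mapsto\langle c,w\rangle$ is weak*-continuous and linear and $C$ is weak*-compact and convex, $T(w)=\langle C,w\rangle$ is the compact interval $[-\sigma_C(-w),\sigma_C(w)]$. Consequently, the membership $f(x)-f(x')\in T(x-x')+\delta|x-x'|\mathbb{B}$ is equivalent to the upper estimate $f(x)-f(x')\le\sigma_C(x-x')+\delta|x-x'|$ together with the same estimate applied to the swapped pair $(x',x)$. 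Thus throughout it suffices to control the single scalar inequality $f(x)-f(x')\le\sigma_C(x-x')+\delta|x-x'|$ for all ordered pairs.

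For the forward implication, I would fix $v\in X$ and $\delta>0$, take the neighborhood $V$ supplied by strict $T$-differentiability, and specialize the upper estimate to $x=x_0+tv$, $x'=x_0$ with $x_0$ near $\bar{x}$ and $t>0$ small (so both lie in $V$). Dividing by $t$ and using the positive homogeneity of $\sigma_C$ gives $[f(x_0+tv)-f(x_0)]/t\le\sigma_C(v)+\delta|v|$; passing to the limsup as $t\searrow0$, $x_0\to\bar{x}$ yields $f^{\circ}(\bar{x};v)\le\sigma_C(v)+\delta|v|$, and then $\delta\downarrow0$ gives $f^{\circ}(\bar{x};v)\le\sigma_C(v)$ for every $v$. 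Since any $\zeta\in\partial_C f(\bar{x})$ satisfies $\langle\zeta,v\rangle\le f^{\circ}(\bar{x};v)\le\sigma_C(v)$ for all $v$, and $C$ is weak*-closed and convex (hence recovered from its support function by separation), we conclude $\zeta\in C$, i.e. $\partial_C f(\bar{x})\subset C$. No semicontinuity is needed here.

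For the converse I would argue through Lebourg's theorem. Fix $\delta>0$; the first task is to upgrade the single-point inclusion $\partial_C f(\bar{x})\subset C$ to a uniform approximate inclusion near $\bar{x}$. Because $f$ is Lipschitz with some modulus $L$ near $\bar{x}$, we have $\partial_C f(u)\subset L\mathbb{B}$ for $u$ in a neighborhood $W$ of $\bar{x}$, so the truncation by $\rho=L$ in the definition of outer semicontinuity is harmless. Applying outer semicontinuity at $\bar{x}$ with the norm-open set $U=C+\delta\,\mathrm{int}(\mathbb{B})\supset\partial_C f(\bar{x})$ produces a neighborhood $V_0\subset W$ on which $\partial_C f(u)\subset C+\delta\mathbb{B}$. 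Shrinking to a convex neighborhood $V\subset V_0$ and taking any $x,x'\in V$, Lebourg's theorem furnishes $u\in(x',x)\subset V_0$ and $\zeta\in\partial_C f(u)$ with $f(x)-f(x')=\langle\zeta,x-x'\rangle$; writing $\zeta=c+\delta b$ with $c\in C$ and $b\in\mathbb{B}$ gives $f(x)-f(x')\le\sigma_C(x-x')+\delta|x-x'|$, which is exactly the scalar inequality needed for strict $T$-differentiability. In finite dimensions the outer semicontinuity of $x\mapsto\partial_C f(x)$ is automatic, so the extra hypothesis is vacuous there.

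I expect the main obstacle to be precisely the converse step that passes from $\partial_C f(\bar{x})\subset C$ to the uniform enlargement $\partial_C f(u)\subset C+\delta\mathbb{B}$ on a whole neighborhood: this is where outer semicontinuity is indispensable, and where one must be careful in the infinite-dimensional setting that it is read in the norm topology of $X^{*}$ and that the $\rho$-truncation in its definition is neutralized by the uniform Lipschitz bound on the Clarke subgradients. The forward direction, by contrast, is a routine difference-quotient computation once the support-function reformulation is in place.
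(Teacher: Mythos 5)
Your proof is correct and takes essentially the same route as the paper's: the forward implication by bounding the Clarke directional derivative via difference quotients, $f^{\circ}(\bar{x};v)\le\sup_{c\in C}\left\langle c,v\right\rangle$, and then separating a putative $\zeta\in\partial_{C}f(\bar{x})\setminus C$ from the weak*-compact convex set $C$; the converse by using outer semicontinuity to get $\partial_{C}f(u)\subset C+\delta\mathbb{B}$ on a neighborhood and then applying Lebourg's mean value theorem (Theorem \ref{thm:Lebourg-MVT}). Your two extra refinements --- reducing membership in $T(x-x')+\delta|x-x'|\mathbb{B}$ to a single scalar support-function inequality over ordered pairs, and noting that the $\rho$-truncation in the paper's definition of outer semicontinuity is neutralized by the uniform Lipschitz bound $\partial_{C}f(u)\subset L\mathbb{B}$ --- are details the paper's proof glosses over, not a different approach.
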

\begin{proof}
If $f$ is strictly $T$-differentiable at $\bar{x}$, then for any
direction $h\in X$, the Clarke directional derivative is\begin{eqnarray*}
\max_{v\in\partial_{C}f(\bar{x})}\left\langle v,h\right\rangle  & = & f^{\circ}(\bar{x};h)\\
 & = & \limsup_{x\rightarrow\bar{x},t\searrow0}\frac{f(x+th)-f(x)}{t}\\
 & \leq & \max_{v\in C}\left\langle v,h\right\rangle .\end{eqnarray*}
Suppose on the contrary that $v^{\prime}\in\partial_{C}f(\bar{x})\backslash C$.
Then there exists a direction $h^{\prime}$ and some $\alpha\in\mathbb{R}$
such that $\left\langle v^{\prime},h^{\prime}\right\rangle >\alpha$
but $\max_{v\in C}\left\langle v,h^{\prime}\right\rangle \leq\alpha$.
This is a contradiction, which shows that $\partial_{C}f(\bar{x})\subset C$. 

We now prove the converse. It is well-known that when $X=\mathbb{R}^{n}$,
the Clarke subdifferential mapping of a Lipschitz function is outer
semicontinuous. See \cite{RW98} for example. Suppose that $\partial_{C}f(\bar{x})\subset C$.
By outer semicontinuity, given any $\delta>0$, there is some $\epsilon>0$
such that $\partial_{C}f(x)\subset C+\delta\mathbb{B}$ for all $x\in\mathbb{B}(\bar{x},\epsilon)$.
Theorem \ref{thm:Lebourg-MVT} states that for any points $x_{1},x_{2}\in\mathbb{B}(\bar{x},\epsilon)$,
there is an $x^{\prime}\in(x_{1},x_{2})$ such that \[
f(x_{1})-f(x_{2})\subset\left\langle \partial_{C}f(x^{\prime}),x_{1}-x_{2}\right\rangle .\]
This immediately implies that $f(x_{1})\in f(x_{2})+\left\langle C,x_{1}-x_{2}\right\rangle +\delta|x_{1}-x_{2}|\mathbb{B}$,
and hence strict $T$-differentiability. 
\end{proof}
It is well known that in finite dimensions, the Clarke subdifferential
is the convex hull of the limit of gradients taken over where the
function is differentiable. We now recall Rademacher's theorem.
\begin{thm}
(Rademacher's Theorem) Let $O\subset\mathbb{R}^{n}$ be open, and
let $f:O\rightarrow\mathbb{R}^{m}$ be Lipschitz. Let $D$ be the
subset of $O$ consisting of the points where $F$ is differentiable.
Then $O\backslash D$ is a set of measure zero in $\mathbb{R}^{n}$.
In particular, $D$ is dense in $O$, i.e., $\cl\, D\supset O$. 
\end{thm}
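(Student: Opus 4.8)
The plan is to follow the classical directional-derivative argument, reducing to the scalar case and then exploiting separability of the sphere of directions. First I would reduce to $m=1$: a map $f=(f_{1},\dots,f_{m})$ is differentiable at a point precisely when each component $f_{i}$ is, and a finite intersection of full-measure sets is again of full measure, so it suffices to treat a Lipschitz $f:O\rightarrow\mathbb{R}$. Next, fix a unit vector $v$ and consider the difference quotients $\tfrac{1}{t}\bigl(f(x+tv)-f(x)\bigr)$. Restricting $f$ to a line in direction $v$ gives a one-dimensional Lipschitz, hence absolutely continuous, function, which is differentiable almost everywhere on that line by Lebesgue's theorem; applying Fubini along the family of parallel lines shows that the directional derivative $D_{v}f(x)$ exists for a.e. $x\in O$. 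Taking $v=e_{i}$ yields the partial derivatives, so the gradient $\nabla f(x)=(\partial_{1}f(x),\dots,\partial_{n}f(x))$ exists a.e.

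The crux, and the step I expect to be the main obstacle, is to show that for each fixed $v$ one has $D_{v}f(x)=\nabla f(x)\cdot v$ for a.e. $x$; a priori the directional derivative need not depend linearly on $v$. This I would establish weakly: for every test function $\zeta\in C_{c}^{\infty}(O)$, the uniform bound $|\tfrac{1}{t}(f(x+tv)-f(x))|\leq\mathrm{Lip}(f)$ lets dominated convergence pass the limit inside the integral after the change of variables $x\mapsto x-tv$, giving $\int_{O}D_{v}f\,\zeta=-\int_{O}f\,D_{v}\zeta=\sum_{i}v_{i}\int_{O}\partial_{i}f\,\zeta=\int_{O}(\nabla f\cdot v)\,\zeta$, where the middle equalities use the same one-dimensional integration by parts applied to each partial. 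Since $\zeta$ is arbitrary, $D_{v}f=\nabla f\cdot v$ a.e. The delicate point is the careful justification of these limit passages and of the translation change of variables inside the integral.

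Finally I would upgrade pointwise directional differentiability to full differentiability by a compactness and separability argument. Choose a countable dense set $\{v_{k}\}\subset S^{n-1}$; on the full-measure set $A=\bigcap_{k}\{x:D_{v_{k}}f(x)=\nabla f(x)\cdot v_{k}\}$ define, for $x\in A$, the quantity $Q(v,t)=\tfrac{1}{t}(f(x+tv)-f(x))-\nabla f(x)\cdot v$. The map $v\mapsto Q(v,t)$ is Lipschitz with constant at most $2\,\mathrm{Lip}(f)$ uniformly in $t$, and $Q(v_{k},t)\to0$ as $t\to0$ for each $k$. Given $\varepsilon>0$, cover $S^{n-1}$ by finitely many $\varepsilon$-balls centered at points $v_{k_{j}}$; for $t$ small all $|Q(v_{k_{j}},t)|<\varepsilon$, and the Lipschitz bound then forces $|Q(v,t)|\leq(1+2\,\mathrm{Lip}(f))\varepsilon$ for every unit $v$. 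Hence $f$ is differentiable at every $x\in A$, so $O\setminus D$ has measure zero. The density assertion $\cl D\supset O$ is then immediate, since a full-measure subset of an open set meets every ball.
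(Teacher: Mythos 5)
Your proof is correct: it is the classical proof of Rademacher's theorem (reduction to $m=1$, a.e.\ existence of directional derivatives via the one-dimensional Lebesgue theorem and Fubini, identification $D_{v}f=\nabla f\cdot v$ a.e.\ through the distributional/test-function argument, and the upgrade to full differentiability by compactness of $S^{n-1}$ together with the uniform Lipschitz bound on difference quotients). Note, however, that the paper does not prove this statement at all --- it merely \emph{recalls} Rademacher's theorem as classical background used later for the Clarke Jacobian --- so there is no in-paper argument to compare yours against. Two small points you should make explicit if this were to be written out in full: first, the Fubini step requires that the set where $D_{v}f$ fails to exist is measurable, which follows because the upper and lower limits of the difference quotients taken along rational $t$ are Borel functions of $x$; second, the bound $|\nabla f(x)|\leq\mathrm{Lip}(f)$ used implicitly in your Lipschitz estimate for $v\mapsto Q(v,t)$ is obtained by taking the supremum of $|\nabla f(x)\cdot v_{k}|=|D_{v_{k}}f(x)|\leq\mathrm{Lip}(f)$ over the dense family $\{v_{k}\}$. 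Neither is a gap; both are routine.
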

Closely related to the Clarke subdifferential is the Clarke Jacobian
that was first introduced in \cite{Cla76}. 
\begin{defn}
(Clarke Jacobian) Let $f:O\rightarrow\mathbb{R}^{m}$ be Lipschitz,
with $O\subset\mathbb{R}^{n}$ open, and let $D\subset O$ consist
of the points where $f$ is differentiable. The \emph{Clarke Jacobian}
(or \emph{generalized Jacobian}) at $\bar{x}$ is defined by\[
\bar{\nabla}f(\bar{x}):=\conv\{A\in\mathbb{R}^{m\times n}:\exists x_{i}\rightarrow\bar{x}\mbox{ with }x_{i}\in D,\nabla f(x_{i})\rightarrow A\}.\]

\end{defn}
It is clear from Rademacher's Theorem that the Clarke Jacobian is
a nonempty, compact set of matrices. For $m=1$, it is well-known
that the Clarke Jacobian reduces to the Clarke subdifferential. The
following result is equivalent to \cite[Proposition 10.9]{Iof81},
and is a generalization of a result that is well known for $m=1$.
\begin{thm}
\label{thm:Jacobian-T-diff}(Clarke Jacobian and $T$-differentiability)
Let $f:O\rightarrow\mathbb{R}^{m}$ be Lipschitz on an open set $O\subset\mathbb{R}^{n}$.
At each $\bar{x}\in O$, $f$ is $T$-differentiable at $\bar{x}$,
where $T:\mathbb{R}^{n}\rightrightarrows\mathbb{R}^{m}$ is defined
by\[
T(w):=\{Aw\mid A\in\bar{\nabla}f(\bar{x})\}.\]
\end{thm}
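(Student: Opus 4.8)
The plan is to combine the outer semicontinuity of the Clarke Jacobian with a mean value inequality for vector-valued Lipschitz maps, and then to absorb the resulting perturbations into the error term $\delta|x-\bar{x}|\mathbb{B}$ using the convexity of $T(x-\bar{x})$. Fix $\delta>0$. Since $T(x-\bar{x})=\{A(x-\bar{x})\mid A\in\bar{\nabla}f(\bar{x})\}$ is the image of the compact convex set $\bar{\nabla}f(\bar{x})$ under the linear evaluation map $A\mapsto A(x-\bar{x})$, it is itself a compact convex subset of $\mathbb{R}^{m}$; consequently $T(x-\bar{x})+\delta|x-\bar{x}|\mathbb{B}$ is closed and convex, which is the feature that will let me pass to convex hulls at the end.

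First I would record the two analytic inputs. The first is that the set-valued map $x\mapsto\bar{\nabla}f(x)$ is outer semicontinuous at $\bar{x}$ with compact values; this is the vector-valued analogue of the fact used in the proof of the preceding theorem and is standard (see \cite{Cla83}). Outer semicontinuity with compact values yields, for the given $\delta$, an $\epsilon>0$ such that $\bar{\nabla}f(u)\subset\bar{\nabla}f(\bar{x})+\delta\mathbb{B}$ (balls taken in the operator norm on $\mathbb{R}^{m\times n}$) for every $u\in\mathbb{B}(\bar{x},\epsilon)$. The second input is the mean value inequality for vector-valued Lipschitz maps: for $x\in\mathbb{B}(\bar{x},\epsilon)$ the whole segment $[\bar{x},x]$ lies in $\mathbb{B}(\bar{x},\epsilon)$, and one has
\[
f(x)-f(\bar{x})\in\cl\conv\bigl\{A(x-\bar{x})\mid A\in\bar{\nabla}f(u),\ u\in[\bar{x},x]\bigr\}.
\]
Because the exact equality of Lebourg's theorem fails once $m>1$, it is this convex-hull form that I must use; it is the genuinely new ingredient compared with the $m=1$ case.

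Then I would combine the two. For any $u\in[\bar{x},x]$ and any $A\in\bar{\nabla}f(u)$, the outer semicontinuity estimate gives a decomposition $A=A_{0}+E$ with $A_{0}\in\bar{\nabla}f(\bar{x})$ and operator norm $\|E\|\le\delta$. Hence $A(x-\bar{x})=A_{0}(x-\bar{x})+E(x-\bar{x})$ with $A_{0}(x-\bar{x})\in T(x-\bar{x})$ and $|E(x-\bar{x})|\le\|E\|\,|x-\bar{x}|\le\delta|x-\bar{x}|$, so every generator $A(x-\bar{x})$ lies in $T(x-\bar{x})+\delta|x-\bar{x}|\mathbb{B}$. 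Since this set is closed and convex, it contains the closed convex hull of all such generators, and therefore contains $f(x)-f(\bar{x})$ by the mean value inequality. As $\delta>0$ was arbitrary, this is exactly $T$-differentiability with $V=\mathbb{B}(\bar{x},\epsilon)$.

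The main obstacle is the choice of the right mean value statement: Lebourg's theorem (Theorem \ref{thm:Lebourg-MVT}) is exact only for scalar functions, so for $m>1$ I cannot expect a single point $u$ on the segment and must instead invoke the closed-convex-hull mean value inequality, whose validity rests on Rademacher's theorem together with a Fubini-type argument along lines parallel to $x-\bar{x}$. A secondary technical point is the compatibility of norms: the outer semicontinuity estimate is naturally phrased in a matrix norm, and one must use the operator norm (or adjust $\delta$ by a fixed constant) so that the matrix perturbation $E$ produces an error bounded by $\delta|x-\bar{x}|$ in $\mathbb{R}^{m}$, matching the form $\delta|w|\mathbb{B}$ appearing in the definition of $T$-differentiability.
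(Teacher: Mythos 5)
Your proof is correct, but it takes a genuinely different route from the paper's. The paper argues by direct integration: it fixes a direction $w$, uses Rademacher's theorem plus a Fubini-type measure argument to select segments $[x,x+\tau w]$ that meet the nondifferentiability set in one-dimensional measure zero, writes $f(x+\tau w)-f(x)=\int_{0}^{\tau}\nabla f(x+tw)(w)\,dt$, observes that each integrand lies in the compact convex set $T(w)+\epsilon|w|\mathbb{B}$ (so the integral stays in $T(\tau w)+\epsilon|\tau w|\mathbb{B}$ by convexity), and finally removes the restriction on $x$ by perturbation; the only continuity input it needs is the one-point fact, immediate from the definition of $\bar{\nabla}f(\bar{x})$ by a compactness argument, that gradients at nearby points of differentiability lie in $\bar{\nabla}f(\bar{x})+\epsilon\mathbb{B}$. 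You instead black-box exactly this integration step into the closed-convex-hull mean value inequality for vector-valued Lipschitz maps (Clarke's refinement of Lebourg's theorem, Proposition 2.6.5 of \cite{Cla83}) and combine it with outer semicontinuity of the full set-valued map $u\mapsto\bar{\nabla}f(u)$. Your organization has the merit of running exactly parallel to the paper's proof of the preceding theorem on the Clarke subdifferential (scalar mean value theorem plus outer semicontinuity of $\partial_{C}f$), and it isolates cleanly the single new ingredient required when $m>1$, namely the convex-hull form of the mean value statement; the cost is that you invoke two results not stated in the paper (the vector mean value inequality, and outer semicontinuity of the Jacobian map, which needs a diagonal/Carath\'{e}odory-type argument rather than the simpler one-point compactness argument), whereas the paper's proof is self-contained modulo Rademacher's theorem. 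Your convexity and norm bookkeeping is sound: $T(x-\bar{x})+\delta|x-\bar{x}|\mathbb{B}$ is compact and convex, hence absorbs the closed convex hull of the generators $A(x-\bar{x})$, and the operator-norm bound $|E(x-\bar{x})|\leq\|E\|\,|x-\bar{x}|$ matches the $\delta|w|\mathbb{B}$ error term in the definition.
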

\begin{proof}
In the case $O=\mathbb{R}^{n}$, a useful equivalent condition for
a set $C\subset\mathbb{R}^{n}$ to be of measure zero is this: with
respect to any vector $w\neq\mathbf{0}$, $C$ is of measure zero
if and only if the set $\{\tau\mid x+\tau w\in C\}\subset\mathbb{R}$
is of measure zero (in the one-dimensional sense) for all $x$ outside
a set of measure zero. Let $D$ be the subset of $O$ on which $f$
is differentiable. 

Fix $w\neq\mathbf{0}$ and let $x\in O$ and $\tau>0$ be such that
$[x,x+\tau w]\subset O$, and $\{t\in[0,\tau]\mid[x,x+\tau w]\cap(O\backslash D)\}$
has (one dimensional) measure zero. Then the function $\varphi(t)=f(x+tw)$
is Lipschitz continuous for $t\in[0,\tau]$. Lipschitz continuity
guarantees that $\varphi(\tau)=\varphi(0)+\int_{0}^{\tau}\varphi^{\prime}(t)dt$,
and in this integral a negligible set of $t$ values can be disregarded.
Thus, the integral is unaffected if we concentrate on $t$ values
such that $x+tw\in D$, in which case $\varphi^{\prime}(t)=\nabla f(x+tw)(w)$.

For any $\epsilon>0$, we can consider a neighborhood $O^{\prime}\subset O$
of $\bar{x}$ so that whenever $x\in O^{\prime}\cap D$, then $\nabla f(x)\in\bar{\nabla}f(\bar{x})+\epsilon\mathbb{B}$.
We have\begin{eqnarray*}
f(x+\tau w) & = & \varphi(\tau)\\
 & = & \varphi(0)+\int_{0}^{\tau}\varphi^{\prime}(t)dt\\
 & = & f(x)+\int_{0}^{\tau}\nabla f(x+tw)(w)dt\\
 & \subset & f(x)+\int_{0}^{\tau}T(w)+\epsilon|w|\mathbb{B}dt\\
 & \subset & f(x)+T(\tau w)+\epsilon|\tau w|\mathbb{B}.\end{eqnarray*}
The case where $\nabla f(x+tw)$ does not exist for all $t$ in a
set of nonzero measure can be treated easily by perturbing $x$. This
establishes the $T$-differentiability of $f$.
\end{proof}
In Theorem \ref{thm:Jacobian-T-diff}, it is clear that there can
be no closed convex valued positively homogeneous map $T^{\prime}\subsetneq T$
such that $f$ is $T^{\prime}$-differentiable at $\bar{x}$.

\section{\label{sec:set-valued-diff}Generalized differentiability of set-valued
maps}

In this section, we move on to define the generalized differentiability
of set-valued maps and state some basic properties. Here is the first
definition of the differentiability of a set-valued map.
\begin{defn}
($T$-differentiability) \label{def:set-valued-diff}Let $T:X\rightrightarrows Y$
be a positively homogeneous set-valued map.

(a) We say that $S:X\rightrightarrows Y$ is \emph{outer $T$-differentiable}
at $\bar{x}$ if for any $\delta>0$, there exists a neighborhood
$V$ of $\bar{x}$ such that \[
S(x)\subset S(\bar{x})+T(x-\bar{x})+\delta|x-\bar{x}|\mathbb{B}\mbox{ for all }x\in V.\]
It is\emph{ inner $T$-differentiable }at $\bar{x}$ if the formula
above is replaced by \[
S(\bar{x})\subset S(x)-T(x-\bar{x})+\delta|x-\bar{x}|\mathbb{B}\mbox{ for all }x\in V.\]
It is\emph{ $T$-differentiable} at $\bar{x}$ if it is both outer
$T$-differentiable and inner $T$-differentiable. 

(b) We say that $S:X\rightrightarrows Y$ is \emph{strictly} \emph{$T$-differentiable}
at $\bar{x}$ if for any $\delta>0$, there exists a neighborhood
$V$ of $\bar{x}$ such that \[
S(x)\subset S(x^{\prime})+T(x-x^{\prime})+\delta|x-x^{\prime}|\mathbb{B}\mbox{ for all }x,x^{\prime}\in V.\]

\end{defn}
It is elementary that if $S:X\rightrightarrows Y$ is a single-valued
map $S:X\rightarrow Y$, then the definitions of outer $T$-differentiability,
inner $T$-differentiability and $T$-differentiability in Definition
\ref{def:set-valued-diff}(a) coincide. 

The case $T(w):=\kappa|w|\mathbb{B}$, where $\kappa\geq0$ is finite,
is well studied in variational analysis. In the definitions of calmness
and Lipschitz continuity below, we recall the notation for $\kappa$
commonly used in variational analysis. Calmness was first referred
to as {}``upper Lipschitzian'' by Robinson \cite{Rob81}, who established
this property for polyhedral mappings. 
\begin{defn}
(Calmness and Lipschitzness) \label{def:calm-Lipschitz}(a) We say
that $S:X\rightrightarrows Y$ is \emph{calm} at $\bar{x}$ if there
exists a neighborhood $V$ of $\bar{x}$ and $\kappa\geq0$ such that
\[
S(x)\subset S(\bar{x})+\kappa|x-\bar{x}|\mathbb{B}\mbox{ for all }x\in V.\]
The infimum of all constants $\kappa$ is the \emph{calmness modulus},
denoted by $\calm\, S(\bar{x})$. Calmness at $\bar{x}$ is equivalent
to outer $T$-differentiability at $\bar{x}$, where $T:X\rightrightarrows Y$
is defined by $T(w):=[\calm\, S(\bar{x})]|w|\mathbb{B}$.

(b) We say that $S:X\rightrightarrows Y$ is \emph{Lipschitz} at $\bar{x}$
if there exists a neighborhood $V$ of $\bar{x}$ and $\kappa>0$
such that \[
S(x)\subset S(x^{\prime})+\kappa|x-x^{\prime}|\mathbb{B}\mbox{ for all }x,x^{\prime}\in V.\]
The infimum of all constants $\kappa$ is the \emph{Lipschitz modulus},
denoted by $\lip\, S(\bar{x})$. Lipschitz continuity at $\bar{x}$
is equivalent to strict $T$-differentiability at $\bar{x}$, where
$T:X\rightrightarrows Y$ is defined by $T(w):=[\lip\, S(\bar{x})]|w|\mathbb{B}$.
\end{defn}
Consider the case where there is some $\kappa\geq0$ such that $T:X\rightrightarrows Y$
satisfies $T(w)\subset\kappa|w|\mathbb{B}$ for all $w\in X$. It
follows straight from the definitions that if $S:X\rightrightarrows Y$
is outer $T$-differentiable and maps to compact sets, then it is
outer semicontinuous. The same relations hold for inner $T$-differentiability
and inner semicontinuity, and for $T$-differentiability and continuity.
We remark that strict $T$-differentiability implies strict continuity
in the sense of \cite[Definition 9.28]{RW98}, but not vice versa.
Their difference is analogous to the difference between upper semicontinuity
and outer semicontinuity. 

We motivate this definition of set-valued differentiability with Proposition
\ref{pro:T-diff-example}, whose proof is straightforward. We now
recall the Pompieu-Hausdorff distance.
\begin{defn}
(Pompieu Hausdorff distance) For $C,D\subset X$ closed and nonempty,
the \emph{Pompieu-Hausdorff distance} between $C$ and $D$ is the
quantity\[
\mathbf{d}(C,D):=\inf\{\eta\mid C\subset D+\eta\mathbb{B},D\subset C+\eta\mathbb{B}\}.\]

\end{defn}
The Pompieu-Hausdorff distance is a metric on compact subsets of $X$.
In fact, the motivation of calmness and Lipschitz continuity in Definition
\ref{def:calm-Lipschitz} comes from the Pompieu-Hausdorff distance.
To motivate the definition of $T$-differentiability, we note the
following result.
\begin{prop}
\label{pro:T-diff-example}(Single-valued $T$-differentiability)
Suppose $S:X\rightrightarrows Y$ is closed-valued. 
\begin{enumerate}
\item Let $T:X\rightarrow Y$ be a single-valued map. Then $S$ is $T$-differentiable
at $\bar{x}$ if and only if for any $\delta>0$, there is a neighborhood
$V$ of $\bar{x}$ such that \[
\mathbf{d}(S(\bar{x})+T(x-\bar{x}),S(x))\leq\delta|x-\bar{x}|\mbox{ for all }x\in V.\]

\item Let $T:X\rightarrow Y$ be a single-valued map such that $T=-T(-\cdot)$.
Then $S$ is strictly $T$-differentiable at $\bar{x}$ if and only
if for any $\delta>0$, there is a neighborhood $V$ of $\bar{x}$
such that \[
\mathbf{d}(S(x^{\prime})+T(x-x^{\prime}),S(x))\leq\delta|x-x^{\prime}|\mbox{ for all }x,x^{\prime}\in V.\]

\end{enumerate}
\end{prop}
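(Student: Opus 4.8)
The plan is to unwind the Pompieu-Hausdorff distance into its two defining inclusions and match each against the componentwise conditions of Definition \ref{def:set-valued-diff}. The starting point is the elementary equivalence that, for closed sets $C,D$, one has $\mathbf{d}(C,D)\leq\eta$ if and only if both $C\subset D+\eta\mathbb{B}$ and $D\subset C+\eta\mathbb{B}$. The closed-valuedness hypothesis on $S$ is exactly what makes this usable here: $S(x)$ and $S(\bar{x})$ are closed, and since $T$ is single-valued the translated sets $S(\bar{x})+T(x-\bar{x})$ remain closed, so $\mathbf{d}$ compares genuine closed sets and the equivalence applies verbatim.

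For Part (1), I would set $C:=S(\bar{x})+T(x-\bar{x})$, $D:=S(x)$, and $\eta:=\delta|x-\bar{x}|$, and observe that $\mathbf{d}(C,D)\leq\eta$ splits into
\[
S(\bar{x})+T(x-\bar{x})\subset S(x)+\delta|x-\bar{x}|\mathbb{B}
\quad\text{and}\quad
S(x)\subset S(\bar{x})+T(x-\bar{x})+\delta|x-\bar{x}|\mathbb{B}.
\]
The second inclusion is verbatim the outer $T$-differentiability condition. For the first, I would use that $T$ is single-valued, so $T(x-\bar{x})$ is a single point and Minkowski addition by it is merely a translation that can be moved to the other side invertibly; subtracting it yields $S(\bar{x})\subset S(x)-T(x-\bar{x})+\delta|x-\bar{x}|\mathbb{B}$, which is exactly the inner $T$-differentiability condition. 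Since each manipulation is an equivalence, both directions of the stated iff follow at once, after matching neighborhoods (intersect the two neighborhoods coming from outer and inner differentiability, and note both inclusions are requested for a common $\delta$).

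For Part (2), I would proceed identically with $C:=S(x')+T(x-x')$, $D:=S(x)$, and $\eta:=\delta|x-x'|$. The inclusion $D\subset C+\eta\mathbb{B}$ is precisely strict $T$-differentiability. For the reverse inclusion $S(x')+T(x-x')\subset S(x)+\delta|x-x'|\mathbb{B}$, translating out the singleton gives $S(x')\subset S(x)-T(x-x')+\delta|x-x'|\mathbb{B}$; here I would invoke the hypothesis $T=-T(-\cdot)$ to rewrite $-T(x-x')=T(x'-x)$, turning this into $S(x')\subset S(x)+T(x'-x)+\delta|x-x'|\mathbb{B}$. Since $|x-x'|=|x'-x|$ and strict $T$-differentiability quantifies over all ordered pairs in $V$, this is just the strict condition with $x$ and $x'$ interchanged, so it holds automatically.

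The step I expect to require the most care is the use of the oddness hypothesis $T=-T(-\cdot)$ in Part (2): it is precisely what reconciles the symmetric Pompieu-Hausdorff distance, which treats its two inclusions on equal footing, with the one-sided-looking strict $T$-differentiability inclusion, which only ever adds $T$ and never $-T$. Part (1) needs no such hypothesis because the inner $T$-differentiability condition is already phrased with $-T(x-\bar{x})$, so the translated inclusion matches it directly. Everything else is routine bookkeeping of $\delta$ and $V$; in particular, because both conditions are phrased as ``for any $\delta>0$,'' the distinction between $\leq$ and $<$ and any non-attainment of the infimum defining $\mathbf{d}$ washes out by adjusting $\delta$ by a constant factor.
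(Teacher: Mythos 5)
Your proof is correct. The paper itself gives no proof of this proposition, remarking only that it is ``straightforward,'' and your argument is exactly that intended straightforward one: split the Pompieu--Hausdorff bound into its two defining inclusions, note that the second is verbatim the outer (resp.\ strict) condition, translate out the singleton $T$-value to recover the inner condition in Part (1), invoke $T=-T(-\cdot)$ to swap the roles of $x$ and $x'$ in Part (2), and absorb the infimum/non-attainment issue in the definition of $\mathbf{d}$ by adjusting $\delta$.
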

We now make a remark on the Pompieu-Hausdorff distance that is in
the spirit of the main idea in this paper.
\begin{rem}
(More precise measurement of sets) We can rewrite the Pompieu-Hausdorff
distance as\[
\mathbf{d}(C,D)=\inf\{\eta\mid C\subset D+E_{1},D\subset C+E_{2},E_{1}\subset\eta\mathbb{B},E_{2}\subset\eta\mathbb{B}\}.\]
In certain situations, it might be useful to study sets $E_{1}$ and
$E_{2}$ for which $C\subset D+E_{1}$ and $D\subset C+E_{2}$ instead
of just taking them to be $\eta\mathbb{B}$.
\end{rem}
As is well-known in set-valued analysis, setting restrictions on the
range gives a sharper analysis at the points of interest. We make
the following definitions with this in mind.
\begin{defn}
(Pseudo $T$-differentiability) \label{def:pseudo-T-diff}Let $S:X\rightrightarrows Y$
be a set-valued map such that $\bar{y}\in S(\bar{x})$.
\begin{enumerate}
\item Let $T:X\rightrightarrows Y$ be a set-valued map. We say that $S$
is \emph{pseudo outer $T$-differentiable} at $\bar{x}$ for $\bar{y}$
if for any $\delta>0$, there exist neighborhoods $V$ of $\bar{x}$
and $W$ of $\bar{y}$ such that \[
S(x)\cap W\subset S(\bar{x})+T(x-\bar{x})+\delta|x-\bar{x}|\mathbb{B}\mbox{ for all }x\in V.\]
 It is \emph{pseudo inner $T$-differentiable} at $\bar{x}$ for $\bar{y}$
if \[
S(\bar{x})\cap W\subset S(x)-T(x-\bar{x})+\delta|x-\bar{x}|\mathbb{B}\mbox{ for all }x\in V.\]
It is \emph{pseudo $T$-differentiable} at $\bar{x}$ for $\bar{y}$
if it is both pseudo outer  $T$-dif\-fer\-en\-tia\-ble and pseudo
inner $T$-differentiable there. 
\item Let $T:X\rightrightarrows Y$ be a positively homogeneous set-valued
map. We say that $S$ is \emph{pseudo strictly} \emph{$T$-differentiable}
at $\bar{x}$ for $\bar{y}$ if for any $\delta>0$, there exist neighborhoods
$V$ of $\bar{x}$ and $W$ of $\bar{y}$ such that \[
S(x)\cap W\subset S(x^{\prime})+T(x-x^{\prime})+\delta|x-x^{\prime}|\mathbb{B}\mbox{ for all }x,x^{\prime}\in V.\]

\end{enumerate}
\end{defn}
Again, the case $T(w):=\kappa|w|\mathbb{B}$ is of particular interest.
The Aubin property was first introduced as pseudo-Lipschitzness in
\cite{Aub84}.
\begin{defn}
(Calmness and Aubin property) \label{def:calm-aubin}Let $S:X\rightrightarrows Y$
be a set-valued map such that $\bar{y}\in S(\bar{x})$.
\begin{enumerate}
\item We say that $S$ is \emph{calm} at $\bar{x}$ for $\bar{y}$ if there
exist neighborhoods $V$ of $\bar{x}$ and $W$ of $\bar{y}$, and
$\kappa\geq0$ such that \[
S(x)\cap W\subset S(\bar{x})+\kappa|x-\bar{x}|\mathbb{B}\mbox{ for all }x\in V.\]
The infimum of all such constants $\kappa$ is the \emph{calmness
modulus}, denoted by $\calm\, S(\bar{x}\mid\bar{y})$. Calmness is
precisely pseudo outer $T$-differentiability, where $T:X\rightrightarrows Y$
is defined by $T(w):=[\calm\, S(\bar{x}\mid\bar{y})]|w|\mathbb{B}$.
\item We say that $S:X\rightrightarrows Y$ has the \emph{Aubin Property}
at $\bar{x}$ for $\bar{y}$ if there exist neighborhoods $V$ of
$\bar{x}$ and $W$ of $\bar{y}$, and $\kappa\geq0$ such that \[
S(x)\cap W\subset S(x^{\prime})+\kappa|x-x^{\prime}|\mathbb{B}\mbox{ for all }x,x^{\prime}\in V.\]
The infimum of all such constants $\kappa$ is the \emph{graphical
modulus}, denoted by $\lip\, S(\bar{x}\mid\bar{y})$. The Aubin property
is also known as the \emph{pseudo-Lipschitz property }and as the \emph{Lipschitz-like
property}. The Aubin property is precisely pseudo strict $T$-differentiability,
where $T:X\rightrightarrows Y$ is defined by $T(w):=[\lip\, S(\bar{x}\mid\bar{y})]|w|\mathbb{B}$.
\end{enumerate}
\end{defn}

While the Clarke subdifferential is unique, the generalized derivative
$T$ need not be.
\begin{example}
\label{exa:Nonunique-linear}(Nonuniqueness and failure of intersections)
Let $S:\mathbb{R}^{2}\rightrightarrows\mathbb{R}^{2}$ be defined
by $S(x,y)=\{x\}\times\mathbb{R}$. Let $T_{1},T_{2}:\mathbb{R}^{2}\rightarrow\mathbb{R}^{2}$
be defined by $T_{1}(x,y)=(x,y)$ and $T_{2}(x,y)=(x,0)$. It is clear
that $S$ is (pseudo) strictly $T_{1}$-differentiable and (pseudo)
strictly $T_{2}$-differentiable at all points, but it is not (pseudo)
$(T_{1}\cap T_{2})$-differentiable anywhere.
\end{example}
The positively homogeneous map $T:\mathbb{R}^{n}\rightrightarrows\mathbb{R}^{m}$
defined from the Clarke Jacobian in Theorem \ref{thm:Jacobian-T-diff}
is defined as the union of linear functions, and thus satisfies $T=-T(-\cdot)$.
In such a case $T$-differentiability implies $-T(-\cdot)$-differentiability
for single valued maps. But this need not be the case for set-valued
maps.
\begin{example}
\label{exa:double-neg-failure}(Failure of set-valued $-T(-\cdot)$-differentiability)
Let $S:\mathbb{R}\rightrightarrows\mathbb{R}$ be defined by $S(x):=(-\infty,x]$,
and let $T:\mathbb{R}\rightrightarrows\mathbb{R}$ be defined by \[
T(w)=\begin{cases}
\{0\} & \mbox{ if }w\leq0\\
{}[-w,w] & \mbox{ if }w\geq0.\end{cases}\]
Here, $S$ is strictly $T$-differentiable at $x$ for all $x\in\mathbb{R}$,
and it is pseudo strictly $T$-differentiable at $x$ for $x$ for
all $x\in\mathbb{R}$. However, $S$ is neither outer (or inner) $-T(-\cdot)$-differentiable
anywhere, nor pseudo outer (or inner) $-T(-\cdot)$-differentiable
at $x$ for $x$ for any $x\in\mathbb{R}$. 
\end{example}
We remark that while inner $T$-differentiability is defined so that
Proposition \ref{pro:T-diff-example} holds, this definition of inner
$T$-differentiability does not satisfy the property that strict $T$-differentiability
implies inner $T$-differentiability in general. It actually implies
inner $-T(-\cdot)$-differentiability. The same holds for pseudo strict
$T$-differentiability and pseudo inner $T$-differentiability, or
more correctly, pseudo inner $-T(-\cdot)$-differentiability. An example
where this occurs is the function $S:\mathbb{R}\rightrightarrows\mathbb{R}$
as defined in Example \ref{exa:double-neg-failure}. Fortunately,
inner $T$-differentiability does not play a huge role in this paper.

Much of the current methods for set-valued differentiation are motivated
by looking at the tangent cones of the graph of the set-valued map.
See the discussion in \cite[Chapter 5]{AF90} on the different forms
of set-valued differentiation obtained by taking different kinds of
tangent cones of the graph. The notions of semidifferentiability \cite{Penot84}
and proto-differentiability \cite{Roc89} are based on this idea.
We recall the definitions of semidifferentiability and proto-differentiability
from \cite[Page 331- 332]{RW98}. See also the techniques in \cite[Chapter 5]{AF90}.
We now point out the lack of relation between pseudo $T$-differentiability
and these methods by observing the finite dimensional case. 
\begin{defn}
\label{def:set-valued-semidiff}\cite{Penot84} (Semidifferentiability)
For a set-valued map $S:\mathbb{R}^{n}\rightrightarrows\mathbb{R}^{m}$,
let $\bar{x}\in\dom(S)$, $\bar{y}\in S(\bar{x})$ and $\bar{w}\in\mathbb{R}^{n}$.
If the limit \[
\lim_{\tau\searrow0,w\rightarrow\bar{w}}\frac{S(\bar{x}+\tau w)-\bar{y}}{\tau}\]
exists, then we say that it is the \emph{semiderivative }at $\bar{x}$
for $\bar{y}$ and $\bar{w}$. If the semiderivative exists for every
vector $\bar{w}\in\mathbb{R}^{n}$, then $S$ is \emph{semidifferentiable
}at $\bar{x}$ for $\bar{y}$ with derivative $DS(\bar{x}\mid\bar{y}):\mathbb{R}^{n}\rightrightarrows\mathbb{R}^{m}$
defined by $DS(\bar{x}\mid\bar{y})(\bar{w})$ being equal to the limit
defined above. 
\begin{defn}
\cite{Roc89} (Proto-differentiability) A mapping $S:\mathbb{R}^{n}\rightrightarrows\mathbb{R}^{m}$
is said to be \emph{proto-differentiable} at $\bar{x}$ for an element
$\bar{y}\in S(\bar{x})$ if \[
DS(\bar{x}\mid\bar{y})(\bar{w})=\limsup_{\tau\searrow0,w\rightarrow\bar{w}}\frac{S(\bar{x}+\tau w)-\bar{y}}{\tau},\]
and there exist for each $\bar{z}\in DS(\bar{x}\mid\bar{u})(\bar{w})$
and choice of $\tau_{i}\searrow0$ sequences \[
w_{i}\to\bar{w}\mbox{ and }z_{i}\to\bar{z}\mbox{ with }z_{i}\in[S(\bar{x}+\tau_{i}w_{i})-\bar{y}]/\tau_{i}.\]

\end{defn}
\end{defn}
For this paper, it is sufficient to note that semidifferentiability
implies proto-differentiability through \cite[Exercise 8.43 (a)(d)]{RW98}.

The definition of semidifferentiability for set-valued maps is not
to be confused with the semidifferentiability defined for single-valued
maps before Definition \ref{def:Clarke-subdiff}. Clearly, the limits
in the definitions above are cones, and the derivative $DS(\bar{x}\mid\bar{y})$
are both positively homogeneous maps. We present an example where
$S:\mathbb{R}\rightrightarrows\mathbb{R}^{2}$ is not pseudo outer
$DS(\bar{x}\mid\bar{y})$-differentiable at $\bar{x}$ for $\bar{y}$.
\begin{prop}
\label{pro:semi_T} (No $T$-differentiability from semidifferentiability)
Consider the set-valued map $S:\mathbb{R}\rightrightarrows\mathbb{R}^{2}$
defined by \begin{eqnarray*}
S(x) & = & \begin{cases}
\{(t,\sqrt{-t})\mid{\color{red}x}\leq t\leq0\}\cup\big(\{x\}\times[\sqrt{-x},\infty)\big) & \mbox{ if }x\leq0\\
\{(t,\sqrt{t})\mid0\leq t\leq x\}\cup\big(\{x\}\times[\sqrt{x},\infty)\big) & \mbox{ if }x\geq0.\end{cases}\end{eqnarray*}
This map is semidifferentiable (and hence proto-differentiable) at
$0$ for $\mathbf{0}$, but not pseudo outer $DS(0\mid\mathbf{0})$-differentiable
at $0$ for $\mathbf{0}$.\end{prop}
\begin{proof}
The map $S$ is semidifferentiable (and hence proto-differentiable)
at $0$ for $\mathbf{0}$, with semiderivative $DS(0\mid\mathbf{0}):\mathbb{R}\rightrightarrows\mathbb{R}^{2}$
defined by $DS(0\mid\mathbf{0})(w)=\{0\}\times[0,\infty)$ for all
$w\in\mathbb{R}$. 

Suppose on the contrary that $S$ is pseudo outer $DS(0\mid\mathbf{0})$-differentiable
at $0$ for $\mathbf{0}$. Then for any $\delta>0$, there are neighborhoods
$V$ of $0$ and $W$ of $\mathbf{0}$ such that\begin{equation}
S(x)\cap W\subset S(0)+[DS(0\mid\mathbf{0})+\delta](x)\mbox{ for all }x\in V.\label{eq:pseu-o-no}\end{equation}
The neighborhood $W$ contains some point of the form $(0,\alpha)$
in its interior. We can find a neighborhood $W_{\alpha}$ of $(0,\alpha)$
such that $W_{\alpha}\subset W$ and \[
S(x)\cap W_{\alpha}=[\{x\}\times(-\infty,\infty)]\cap W_{\alpha},\]
as illustrated in Figure \ref{fig:semi_T}. An easy calculation shows
that \[
S(0)+[DS(0\mid\mathbf{0})+\delta](x)=[-\delta x,\delta x]\times[0,\infty).\]
This means that if $\delta<1$, then $S(x)\cap W_{\alpha}\not\subset S(0)+[DS(0\mid\mathbf{0})+\delta](x)$
for all $x$ sufficiently close to $0$, hence violating \eqref{eq:pseu-o-no}.
Therefore $S$ is not pseudo outer $DS(0\mid\mathbf{0})$-differentiable
at $0$ for $\mathbf{0}$.
\end{proof}
\begin{figure}
\includegraphics[scale=0.5]{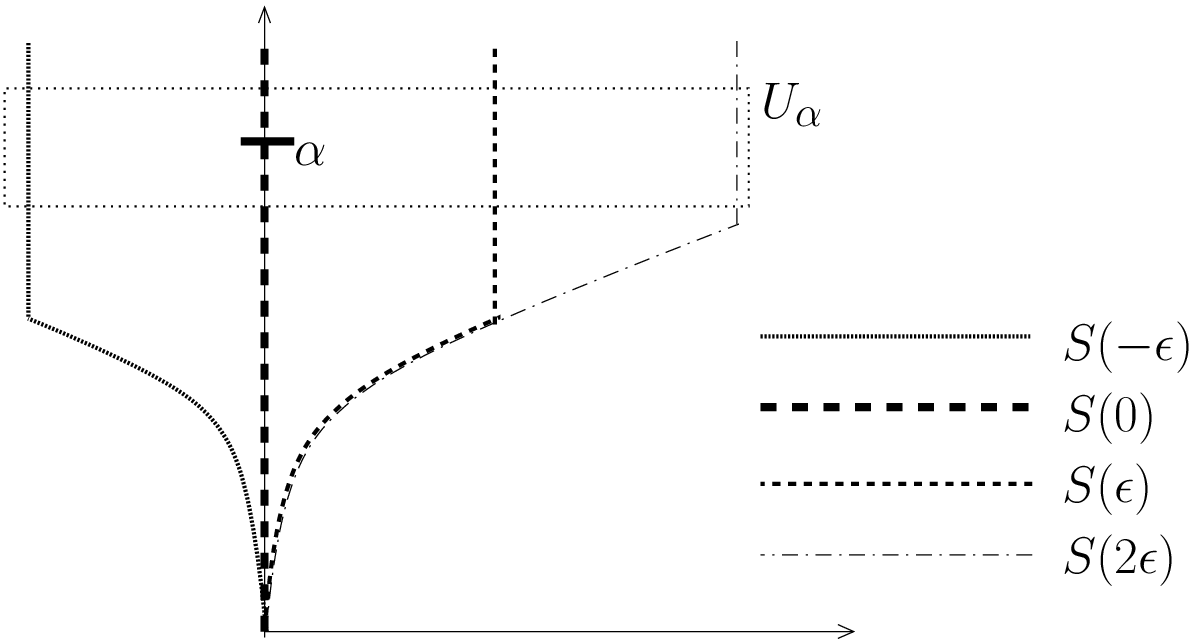}

\caption{\label{fig:semi_T}In Proposition \ref{pro:semi_T}, for any $\alpha>0$,
we can find a neighborhood $W_{\alpha}$ of the point $(0,\alpha)$
such that the map $S:\mathbb{R}\rightrightarrows\mathbb{R}^{2}$ satisfies
$S(x)\cap W_{\alpha}=[\{x\}\times(-\infty,\infty)]\cap W_{\alpha}$. }

\end{figure}

One may expect that if $S$ is $T$-differentiable at $\bar{x}$ for
$\bar{y}$, then $DS(\bar{x}\mid\bar{y})\subset T$. The following
example shows that this is not the case. 
\begin{example}
(No semidifferentiability from $T$-differentiability) Consider $S:\mathbb{R}\rightrightarrows\mathbb{R}$
defined by $S(x)=\mathbb{R}$. Clearly $S$ is pseudo $T$-differentiable
at $x$ for any $y\in\mathbb{R}$, where $T:\mathbb{R}\rightarrow\mathbb{R}$
is defined by $T(w)=0$ for all $w\in\mathbb{R}$. But the semiderivative
$DS(\bar{x}\mid\bar{y}):\mathbb{R}\rightrightarrows\mathbb{R}$, or
equivalently the proto-derivative, is equal to $S$, which gives $DS(\bar{x}\mid\bar{y})\not\subset T$
for all $\bar{x},\bar{y}\in\mathbb{R}$.
\end{example}

\section{\label{sec:Calculus}Calculus of $T$-differentiability}

In this section, we prove a chain rule and a sum rule similar to the
coderivative calculus in \cite[Section 10H]{RW98}, which was first
studied in \cite{Mor94}. Let us first introduce the outer norm of
a positively homogeneous set-valued map.
\begin{defn}
(Outer norm) If $T:X\rightrightarrows Y$ is a positively homogeneous
map, then the \emph{outer norm of $T$} is\[
|T|^{+}:=\sup_{|x|\leq1}\sup_{y\in T(x)}|y|=\sup\{\left|y\right|\mid y\in T(x),\left|x\right|\leq1\}.\]
Clearly $|T|^{+}<\infty$ implies $T(\mathbf{0})=\{\mathbf{0}\}$.
When $T(\mathbf{0})=\{\mathbf{0}\}$, $|T|^{+}=\calm\, T(\mathbf{0}\mid\mathbf{0})=\calm\, T(\mathbf{0})$.
\end{defn}
Here is a chain rule for $T$-differentiable functions. In the statements
of the results in this section, the positively homogeneous map $T_{\bar{x}\to y}$
refers to a generalized derivative that arises from considering the
point $y\in F(\bar{x})$. Other such maps $T$ are similarly defined.
\begin{thm}
\label{thm:Chain-rule}(Chain rule) Let $F:X\rightrightarrows Y$,
$G:Y\rightrightarrows Z$, and $\bar{z}\in G\circ F(\bar{x})$. Suppose
the following conditions hold
\begin{enumerate}
\item $F$ is pseudo outer  $T_{\bar{x}\to y}$-differentiable at $\bar{x}$
for $y$ for all $y$ in $G^{-1}(\bar{z})\cap F(\bar{x})$.
\item $G$ is pseudo strictly $T_{y\to\bar{z}}$-differentiable at $y$
for $\bar{z}$ for all $y$ in $G^{-1}(\bar{z})\cap F(\bar{x})$. 
\item The set $G^{-1}(\bar{z})\cap F(\bar{x})\subset Y$ is compact.
\item The map $(x,z)\mapsto G^{-1}(z)\cap F(x)$ is outer semicontinuous
at $(\bar{x},\bar{z})$.
\item $\alpha:=\sup_{y\in G^{-1}(\bar{z})\cap F(\bar{x})}\left|T_{\bar{x}\to y}\right|^{+}$
is finite.
\item For all $y\in G^{-1}(\bar{z})\cap F(\bar{x})$, $T_{y\rightarrow\bar{z}}(\mathbf{0})=\{\mathbf{0}\}$,
and $\beta$ is finite, where \[
\beta:=\sup_{y\in G^{-1}(\bar{z})\cap F(\bar{x})}\lip\, T_{y\to\bar{z}}(\mathbf{0}).\]

\end{enumerate}
Then $G\circ F$ is pseudo outer $T$-differentiable at $\bar{x}$
for $\bar{z}$, where $T:X\rightrightarrows Z$ is defined by \begin{equation}
T:=\bigcup_{y\in G^{-1}(\bar{z})\cap F(\bar{x})}T_{y\to\bar{z}}\circ T_{\bar{x}\to y}.\label{eq:chain-rule-formula}\end{equation}
The function $G\circ F$ is pseudo strictly $T$-differentiable for
$T:X\rightrightarrows Y$ defined in \eqref{eq:chain-rule-formula}
if in statement \emph{(1)}, $F$ were pseudo strictly $T_{\bar{x}\to y}$-differentiable
at $\bar{x}$ for $y$ instead. \end{thm}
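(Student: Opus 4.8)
The plan is to fix $\delta>0$ and produce the neighborhoods $V$ of $\bar{x}$ and $W$ of $\bar{z}$ witnessing pseudo outer $T$-differentiability of $G\circ F$. Write $K:=G^{-1}(\bar{z})\cap F(\bar{x})$, which is compact by (3). The overall strategy is to take any $z\in(G\circ F)(x)\cap W$ together with a witness $y\in F(x)$, $z\in G(y)$, to localize $y$ near a point $y_{0}\in K$, and then to chain the pseudo outer $T_{\bar{x}\to y_{0}}$-differentiability of $F$ at $\bar{x}$ for $y_{0}$ with the pseudo strict $T_{y_{0}\to\bar{z}}$-differentiability of $G$ at $y_{0}$ for $\bar{z}$, so that the intermediate point $y$ is eliminated and $z$ is expressed relative to $(G\circ F)(\bar{x})$ and $T(x-\bar{x})$.

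For the localization I would use (4) together with (3). Since $H(x,z):=G^{-1}(z)\cap F(x)$ is outer semicontinuous at $(\bar{x},\bar{z})$ with $H(\bar{x},\bar{z})=K$, for any $\eta>0$ and any $\rho$ with $K\subset\rho\mathbb{B}$ there are neighborhoods of $(\bar{x},\bar{z})$ forcing $H(x,z)\cap\rho\mathbb{B}\subset K+\eta\mathbb{B}$, so the witness $y$ lies within $\eta$ of $K$. Because the neighborhoods $W_{y_{0}},V_{y_{0}}$ (for $F$) and $V_{y_{0}}',W_{y_{0}}'$ (for $G$), as well as the Aubin neighborhoods of $T_{y_{0}\to\bar{z}}$, all depend on $y_{0}$, I would cover $K$ by finitely many half-size neighborhoods attached to points $y_{0}^{1},\dots,y_{0}^{N}\in K$ and take $\eta$ small enough that each $y$ within $\eta$ of $K$ falls in the full neighborhood of some $y_{0}^{j}$. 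Intersecting the finitely many domain and range neighborhoods then produces uniform $V$ and $W$.

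For the chain, fix the anchor $y_{0}=y_{0}^{j}$. Applying the pseudo outer $T_{\bar{x}\to y_{0}}$-differentiability of $F$ to $y\in F(x)\cap W_{y_{0}}$ gives $y=\tilde{y}+v+e_{1}$ with $\tilde{y}\in F(\bar{x})$, $v\in T_{\bar{x}\to y_{0}}(x-\bar{x})$ and $|e_{1}|\leq\delta_{1}|x-\bar{x}|$; here $|v|\leq\alpha|x-\bar{x}|$ by (5). Applying the pseudo strict $T_{y_{0}\to\bar{z}}$-differentiability of $G$ to the pair $(y,\tilde{y})$ (both near $y_{0}$, with $z$ near $\bar{z}$) gives $z=\tilde{z}+u+e_{2}$ with $\tilde{z}\in G(\tilde{y})\subset(G\circ F)(\bar{x})$, $u\in T_{y_{0}\to\bar{z}}(y-\tilde{y})=T_{y_{0}\to\bar{z}}(v+e_{1})$ and $|e_{2}|\leq\delta_{2}|y-\tilde{y}|\leq\delta_{2}(\alpha+\delta_{1})|x-\bar{x}|$. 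The essential step is to absorb the argument error $e_{1}$, and for this I would first record the elementary fact that a positively homogeneous $T$ with $T(\mathbf{0})=\{\mathbf{0}\}$ and the Aubin property at $\mathbf{0}$ for $\mathbf{0}$ of modulus $\kappa$ satisfies the global estimate $T(w)\subset T(w')+\kappa|w-w'|\mathbb{B}$ for all $w,w'$ (scale $w,w'$ and any selected element by a small factor to enter the Aubin neighborhoods; positive homogeneity removes the factor). By (6) this applies to $T_{y_{0}\to\bar{z}}$ with $\kappa$ slightly above $\beta$, yielding $u=u'+e_{3}$ with $u'\in T_{y_{0}\to\bar{z}}(v)\subset(T_{y_{0}\to\bar{z}}\circ T_{\bar{x}\to y_{0}})(x-\bar{x})\subset T(x-\bar{x})$ and $|e_{3}|\leq\kappa\delta_{1}|x-\bar{x}|$. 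Collecting terms, $z=\tilde{z}+u'+(e_{2}+e_{3})$ with $\tilde{z}\in(G\circ F)(\bar{x})$ and $u'\in T(x-\bar{x})$; choosing $\delta_{1}$ with $\kappa\delta_{1}<\delta/2$ and then $\delta_{2}$ with $\delta_{2}(\alpha+\delta_{1})<\delta/2$ forces $|e_{2}+e_{3}|\leq\delta|x-\bar{x}|$, which is the desired inclusion.

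The strict statement follows from the same argument run with a free second base point: replace the pseudo outer step for $F$ by the pseudo strict one applied to the pair $(x,x')$, obtaining $\tilde{y}\in F(x')$ and hence $\tilde{z}\in G(\tilde{y})\subset(G\circ F)(x')$, while the $G$-step and the homogeneity estimate are unchanged. I expect the main obstacle to be the localization in the second paragraph, namely making the passage to the anchors $y_{0}^{j}$ genuinely uniform over the compact set $K$ and, in particular, keeping the witnesses $y$ inside the bounded region where outer semicontinuity is effective; this is precisely why compactness (3), outer semicontinuity (4), and the finiteness bounds (5)--(6) are imposed. The chain composition and the error bookkeeping are routine once this uniformity is secured.
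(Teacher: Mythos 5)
Your chaining computation is the paper's own proof, written element-wise instead of as set inclusions: the paper also covers $K:=G^{-1}(\bar{z})\cap F(\bar{x})$ by finitely many neighborhoods supplied by hypotheses (1) and (2), also uses (5) to bound the intermediate displacement $|y-\tilde{y}|\leq(\alpha+\delta)|x-\bar{x}|$, and also uses (6) --- exactly your positive-homogeneity lemma for $T_{y\to\bar{z}}$ --- to absorb the argument error at cost $\beta\delta|x-\bar{x}|$, arriving at the same total error $\delta(\alpha+\beta+\delta)|x-\bar{x}|$. So the composition and bookkeeping parts of your proposal are correct and follow the same route as the paper; the strict case is handled the same way in both.

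The localization step that you flag as ``the main obstacle,'' however, is a genuine gap --- and you have in fact put your finger on a gap in the paper's own argument. Your inference ``$H(x,z)\cap\rho\mathbb{B}\subset K+\eta\mathbb{B}$, so the witness $y$ lies within $\eta$ of $K$'' is a non sequitur: outer semicontinuity in the sense of Definition \ref{def:osc-isc-cty}(2) controls only the portion of $H(x,z):=G^{-1}(z)\cap F(x)$ inside a fixed ball, so witnesses that escape every bounded set are invisible to it, and nothing in (1)--(6) controls $G$ along such witnesses. The paper hides the same difficulty in the sentence ``we can reduce $U$ and $W$ if necessary so that $G^{-1}(W)\cap F(U)\subset V$,'' which likewise does not follow from Definition \ref{def:osc-isc-cty}(2). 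Moreover, the gap cannot be closed from the stated hypotheses: take $X=Y=Z=\mathbb{R}$, $\bar{x}=\bar{z}=0$, $F(0)=\{0\}$ and $F(x)=\{0,\,1/|x|\}$ for $x\neq0$, $G(y)=\{0\}$ for $y\leq0$ and $G(y)=\{0,\,1/\sqrt{y}\}$ for $y>0$, with $T_{\bar{x}\to 0}\equiv\{\mathbf{0}\}$ and $T_{0\to\bar{z}}\equiv\{\mathbf{0}\}$. Then $G^{-1}(0)=\mathbb{R}$, so $K=\{0\}$ is compact; (1) and (2) hold with these maps because near the base points the elements $1/|x|$ and $1/\sqrt{y'}$ escape every bounded neighborhood $W$; (4) holds because every element of $G^{-1}(z)\cap F(x)$ either equals $0$ or diverges as $x\to0$; and $\alpha=\beta=0$. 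Yet $(G\circ F)(x)=\{0,\sqrt{|x|}\}$ and $\sqrt{|x|}\notin(G\circ F)(0)+T(x-\bar{x})+\delta|x|\mathbb{B}=[-\delta|x|,\delta|x|]$ for small $|x|$, so the conclusion fails. Thus the theorem as literally stated is false, and both your localization and the paper's reduction of $U$ and $W$ become legitimate only after strengthening hypothesis (4) to upper semicontinuity in the sense of Definition \ref{def:osc-isc-cty}(1), or otherwise assuming that the witness map $(x,z)\mapsto G^{-1}(z)\cap F(x)$ is locally bounded near $(\bar{x},\bar{z})$.
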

\begin{proof}
We shall prove only the result for $F$ being pseudo outer  $T$-differentiable.
The proof for pseudo strict $T$-differentiability is almost exactly
the same. Choose any $\delta>0$. Since (2) holds, for each $y\in G^{-1}(\bar{z})\cap F(\bar{x})$,
we can find some open convex neighborhoods $V_{y}^{\prime}$ of $y$
and $W$ of $\bar{z}$ such that \[
G(y^{\prime})\cap W\subset G(y^{\prime\prime})+T_{y\to\bar{z}}(y^{\prime}-y^{\prime\prime})+\delta|y^{\prime}-y^{\prime\prime}|\mathbb{B}\mbox{ for all }y^{\prime},y^{\prime\prime}\in V_{y}^{\prime}.\]
Next, since (1) holds, for each $y\in G^{-1}(\bar{z})\cap F(\bar{x})$,
we can find open convex neighborhoods $U$ of $\bar{x}$ and $V_{y}\subset V_{y}^{\prime}$
of $y$ such that \[
F(x)\cap V_{y}\subset[(F(\bar{x})\cap V_{y}^{\prime})+T_{\bar{x}\to y}(x-\bar{x})+\delta|x-\bar{x}|\mathbb{B}]\cap V_{y}\mbox{ for all }x\in U.\]
Since $G^{-1}(\bar{z})\cap F(\bar{x})$ is compact by (3), there are
finitely many $y_{i}\in G^{-1}(\bar{z})\cap F(\bar{x})$ such that
$G^{-1}(\bar{z})\cap F(\bar{x})\subset\bigcup_{i}V_{y_{i}}$. By taking
finitely many intersections if necessary, the neighborhoods $U$ and
$W$ can be assumed to be independent of $y_{i}$.

Let $V:=\bigcup_{i}V_{y_{i}}$. Since the map $(x,z)\mapsto G^{-1}(z)\cap F(x)$
is outer semicontinuous at $(\bar{x},\bar{z})$ by (4), we can reduce
$U$ and $W$ if necessary so that $G^{-1}(W)\cap F(U)\subset V.$
This implies that for any $x\in U$, \begin{eqnarray*}
G(F(x))\cap W & = & G(F(x)\cap V)\cap W\\
 & = & \left(\bigcup_{i}G\big(F(x)\cap V_{y_{i}}\big)\right)\cap W.\end{eqnarray*}
We have\begin{eqnarray}
 &  & G(F(x)\cap V_{y_{i}})\cap W\nonumber \\
 & \subset & G\big([(F(\bar{x})\cap V_{y_{i}}^{\prime})+T_{\bar{x}\to y_{i}}(x-\bar{x})+\delta|x-\bar{x}|\mathbb{B}]\cap V_{y_{i}}\big)\cap W\nonumber \\
 & \subset & G\big(F(\bar{x})\cap V_{y_{i}}^{\prime}\big)+T_{y_{i}\to\bar{z}}\big(T_{\bar{x}\to y_{i}}(x-\bar{x})+\delta|x-\bar{x}|\mathbb{B}\big)\label{eq:chain-rule-finale}\\
 &  & \qquad\qquad+\delta\big|T_{\bar{x}\to y_{i}}(x-\bar{x})+\delta|x-\bar{x}|\mathbb{B}\big|\mathbb{B}\nonumber \\
 & \subset & G\circ F(\bar{x})+T_{y_{i}\to\bar{z}}\circ T_{\bar{x}\to y_{i}}(x-\bar{x})+\delta\beta|x-\bar{x}|\mathbb{B}+\delta(\alpha+\delta)|x-\bar{x}|\mathbb{B}\nonumber \\
 & = & G\circ F(\bar{x})+T_{y_{i}\to\bar{z}}\circ T_{\bar{x}\to y_{i}}(x-\bar{x})+\delta(\alpha+\beta+\delta)|x-\bar{x}|\mathbb{B}.\nonumber \end{eqnarray}
We may assume that $\delta$ is small enough so that $\alpha+\beta+\delta$
is bounded from above by some finite constant $\theta$. Choosing
$y_{i}$ over all $i$ gives \begin{eqnarray*}
 &  & \bigcup_{i}G\big(F(x)\cap V_{y_{i}}\big)\cap W\\
 & \subset & G\circ F(\bar{x})+\left(\bigcup_{i}T_{y_{i}\to\bar{z}}\circ T_{\bar{x}\to y_{i}}(x-\bar{x})\right)+\delta\theta|x-\bar{x}|\mathbb{B}\\
 & \subset & G\circ F(\bar{x})+\left(\bigcup_{y\in G^{-1}(\bar{z})\cap F(\bar{x})}T_{y\to\bar{z}}\circ T_{\bar{x}\to y}(x-\bar{x})\right)+\delta\theta|x-\bar{x}|\mathbb{B}.\end{eqnarray*}
This completes the proof of the theorem.
\end{proof}
In Theorem \ref{thm:Chain-rule}, we require $G:Y\rightrightarrows Z$
to be pseudo strictly $T$-differentiable for the appropriate $T:Y\rightrightarrows Z$.
If $G:Y\rightrightarrows Z$ were pseudo outer $T$-differentiable,
then we can still obtain a result for the case where $F:X\rightrightarrows Y$
is single-valued.
\begin{prop}
(Chain rule) Let $f:X\to Y$ and $G:Y\rightrightarrows Z$, $\bar{y}=f(\bar{x})$,
and $\bar{z}\in G(\bar{y})$. Suppose the following conditions hold
\begin{enumerate}
\item $f$ is  $T_{\bar{x}\to\bar{y}}$-differentiable at $\bar{x}$ for
$\bar{y}$.
\item $G$ is pseudo outer $T_{\bar{y}\to\bar{z}}$-differentiable at $\bar{y}$
for $\bar{z}$.
\item $\left|T_{\bar{x}\to\bar{y}}\right|^{+}$ is finite.
\item $T_{\bar{y}\rightarrow\bar{z}}(\mathbf{0})=\{\mathbf{0}\}$, and $\lip\, T_{\bar{y}\to\bar{z}}(\mathbf{0})$
is finite.
\end{enumerate}
Then $G\circ f$ is pseudo outer $T$-differentiable at $\bar{x}$
for $\bar{z}$, where $T:X\rightrightarrows Z$ is defined by $T=T_{\bar{y}\to\bar{z}}\circ T_{\bar{x}\to\bar{y}}.$\end{prop}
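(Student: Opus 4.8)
The plan is to exploit the fact that $f$ is single-valued, so that $f(\bar{x})=\{\bar{y}\}$ is a single point and forming $G\circ f$ only requires comparing $G$ at the moving point $f(x)$ against $G$ at the fixed base point $\bar{y}$. This is exactly what pseudo \emph{outer} $T_{\bar{y}\to\bar{z}}$-differentiability of $G$ supplies, which is why the strict differentiability of $G$ needed in Theorem \ref{thm:Chain-rule} (there $F(\bar{x})$ may contain many points, forcing comparisons between two moving points) can be dropped here. For the same reason the compactness and outer-semicontinuity hypotheses of Theorem \ref{thm:Chain-rule} are automatic and play no role, since $G^{-1}(\bar{z})\cap f(\bar{x})\subset\{\bar{y}\}$.

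Fix $\delta>0$; auxiliary tolerances $\delta',\delta''>0$ will be chosen at the end. First I would invoke condition (2) to produce neighborhoods $V'$ of $\bar{y}$ and $W$ of $\bar{z}$ with
\[
G(y)\cap W\subset G(\bar{y})+T_{\bar{y}\to\bar{z}}(y-\bar{y})+\delta'|y-\bar{y}|\mathbb{B}\quad\text{for all }y\in V',
\]
and condition (1) to produce a neighborhood $V$ of $\bar{x}$ with
\[
f(x)-\bar{y}\in T_{\bar{x}\to\bar{y}}(x-\bar{x})+\delta''|x-\bar{x}|\mathbb{B}\quad\text{for all }x\in V.
\]
Since $|T_{\bar{x}\to\bar{y}}|^{+}$ is finite by (3), the last display forces $f$ to be calm, hence continuous, at $\bar{x}$, so I can shrink $V$ to a neighborhood $U\subset V$ on which $f(x)\in V'$ as well.

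The key algebraic step is to push the estimate for $f(x)-\bar{y}$ through $T_{\bar{y}\to\bar{z}}$. Writing $f(x)-\bar{y}=p+e$ with $p\in T_{\bar{x}\to\bar{y}}(x-\bar{x})$ and $|e|\le\delta''|x-\bar{x}|$, I would first upgrade the local Aubin property of the positively homogeneous map $T_{\bar{y}\to\bar{z}}$ at $\mathbf{0}$ to a \emph{global} Lipschitz estimate: because $\gph T_{\bar{y}\to\bar{z}}$ is a cone and $T_{\bar{y}\to\bar{z}}(\mathbf{0})=\{\mathbf{0}\}$, a rescaling argument (given $z\in T_{\bar{y}\to\bar{z}}(u)$, apply the local estimate to $\lambda z\in T_{\bar{y}\to\bar{z}}(\lambda u)$ for small $\lambda>0$ and divide by $\lambda$) shows that for every $\beta'>\beta:=\lip\,T_{\bar{y}\to\bar{z}}(\mathbf{0})$ one has $T_{\bar{y}\to\bar{z}}(u)\subset T_{\bar{y}\to\bar{z}}(u')+\beta'|u-u'|\mathbb{B}$ for all $u,u'$. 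Applying this with $u=p+e$, $u'=p$ gives
\[
T_{\bar{y}\to\bar{z}}(f(x)-\bar{y})\subset T_{\bar{y}\to\bar{z}}(p)+\beta'\delta''|x-\bar{x}|\mathbb{B}\subset T(x-\bar{x})+\beta'\delta''|x-\bar{x}|\mathbb{B},
\]
where the second inclusion is merely $T_{\bar{y}\to\bar{z}}(p)\subset(T_{\bar{y}\to\bar{z}}\circ T_{\bar{x}\to\bar{y}})(x-\bar{x})=T(x-\bar{x})$ since $p\in T_{\bar{x}\to\bar{y}}(x-\bar{x})$. One also records $|f(x)-\bar{y}|\le(\alpha+\delta'')|x-\bar{x}|$ with $\alpha:=|T_{\bar{x}\to\bar{y}}|^{+}$.

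Finally, for $x\in U$, combining $G\circ f(x)\cap W=G(f(x))\cap W$ with the outer estimate for $G$ and the two bounds above yields
\[
G\circ f(x)\cap W\subset G\circ f(\bar{x})+T(x-\bar{x})+\big[\beta'\delta''+\delta'(\alpha+\delta'')\big]|x-\bar{x}|\mathbb{B}.
\]
Since $\alpha,\beta'$ are finite by (3) and (4), I would choose $\delta''$ and then $\delta'$ small enough that $\beta'\delta''+\delta'(\alpha+\delta'')\le\delta$, which is exactly pseudo outer $T$-differentiability of $G\circ f$ at $\bar{x}$ for $\bar{z}$. The only genuinely nontrivial point is the global Lipschitz upgrade for $T_{\bar{y}\to\bar{z}}$; everything else is bookkeeping of error terms, whose control is guaranteed precisely by the finiteness hypotheses (3) and (4).
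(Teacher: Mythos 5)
Your proposal is correct and follows essentially the same route as the paper: the paper also sets up the outer estimate for $G$ on a neighborhood $V$ of $\bar{y}$ containing $f(x)$ (using condition (3) to keep the tube $f(\bar{x})+T_{\bar{x}\to\bar{y}}(x-\bar{x})+\delta|x-\bar{x}|\mathbb{B}$ inside $V$), then pushes the error through $T_{\bar{y}\to\bar{z}}$ exactly as in the calculation \eqref{eq:chain-rule-finale} of Theorem \ref{thm:Chain-rule}, with the modulus $\lip\,T_{\bar{y}\to\bar{z}}(\mathbf{0})$ absorbing the term $\delta|x-\bar{x}|\mathbb{B}$ and $|T_{\bar{x}\to\bar{y}}|^{+}$ bounding the secondary error. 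Your explicit rescaling argument upgrading the local Lipschitz estimate for $T_{\bar{y}\to\bar{z}}$ at $\mathbf{0}$ to a global one via positive homogeneity is a detail the paper uses implicitly, so you have merely filled in what the paper leaves to the reader.
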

\begin{proof}
Choose some $\delta>0$. By condition  (1), we can find a neighborhood
$U$ of $\bar{x}$ such that\[
f(x)\in f(\bar{x})+T_{\bar{x}\to\bar{y}}(x-\bar{x})+\delta|x-\bar{x}|\mathbb{B}\mbox{ for all }x\in U.\]
Let $V$ be such that $f(\bar{x})+T_{\bar{x}\to\bar{y}}(x-\bar{x})+\delta|x-\bar{x}|\mathbb{B}\subset V$
for all $x\in U$. By (2), we can shrink $U$ and $V$ if necessary
so that there is a neighborhood $W$ of $\bar{z}$ such that \[
G(y)\cap W\subset G(\bar{y})+T_{\bar{y}\to\bar{z}}(y-\bar{y})+\delta|y-\bar{y}|\mathbb{B}\mbox{ for all }y\in V.\]
A calculation similar to \eqref{eq:chain-rule-finale} concludes the
proof. 
\end{proof}
From the chain rule, we can infer a sum rule.
\begin{cor}
(Sum rule) Let $S_{i}:X\rightrightarrows Y$ for $i=1,\dots,p$, and
$\bar{y}\in\sum_{i=1}^{p}S_{i}(\bar{x})$. Define $F:X\rightrightarrows Y^{p}$
by $F(x)=(S_{1}(x),\dots,S_{p}(x))$, and define $g:Y^{p}\rightarrow Y$
to be the linear map mapping to the sum of the $p$ elements in $Y^{p}$.
Suppose the following conditions hold.
\begin{enumerate}
\item $S_{i}$ is pseudo outer $T_{\bar{x}\to y_{i}}^{i}$-differentiable
at $\bar{x}$ for $y_{i}$ whenever $y_{i}\in S_{i}(\bar{x})$ and
$y_{1}+\cdots+y_{p}=\bar{y}$.
\item The set $\{(y_{1},\dots,y_{p})\mid y_{i}\in S_{i}(\bar{x}),\, y_{1}+\cdots+y_{p}=\bar{y}\}\subset Y^{p}$
is compact.
\item The map $(x,y)\mapsto g^{-1}(y)\cap F(x)$ is outer semicontinuous
at $(\bar{x},\bar{y})$.
\item $\alpha_{i}:=\sup_{y_{i}\in\Pi_{i}(g^{-1}(\bar{z})\cap F(\bar{x}))}\left|T_{\bar{x}\to y_{i}}^{i}\right|^{+}$
is finite for each $i=1,\dots,p$. Here, $\Pi_{i}:Y^{p}\rightarrow Y$
is the projection onto the $i$th coordinate.
\end{enumerate}
Then $g\circ F:X\rightrightarrows Y$, which is the sum of the maps
$S_{i}$, is $T$-differentiable at $\bar{x}$ for $\bar{y}$, where
$T:X\rightrightarrows Y$ is defined by\begin{equation}
T:=\bigcup_{\begin{array}{c}
y_{1}+\cdots+y_{p}=\bar{y}\\
y_{i}\in S_{i}(\bar{x})\end{array}}\left(\sum_{i}T_{\bar{x}\rightarrow y_{i}}^{i}\right).\label{eq:sum-rule-formula}\end{equation}
The function $g\circ F$ is pseudo strictly $T$-differentiable for
$T:X\rightrightarrows Y$ defined in \eqref{eq:sum-rule-formula}
if in statement \emph{(1)}, $S_{i}$ were pseudo strictly $T_{\bar{x}\to y_{i}}^{i}$-differentiable
at $\bar{x}$ for $y_{i}$ instead. \end{cor}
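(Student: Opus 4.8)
The plan is to deduce this corollary directly from the chain rule (Theorem \ref{thm:Chain-rule}) applied to the factorization $g\circ F$, where $F=S_1\times\cdots\times S_p$ sends $x$ to the Cartesian product $S_1(x)\times\cdots\times S_p(x)\subset Y^p$ and $g:Y^p\to Y$ is the summing map. The only genuinely new ingredient is a product rule that produces a prederivative of $F$ from the prederivatives $T^i_{\bar x\to y_i}$; everything else is a matter of matching the hypotheses of the chain rule and computing the composite prederivative.

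First I would establish that $F$ is pseudo outer $(T^1_{\bar x\to y_1}\times\cdots\times T^p_{\bar x\to y_p})$-differentiable at $\bar x$ for each $y=(y_1,\dots,y_p)\in g^{-1}(\bar y)\cap F(\bar x)$, where the product map sends $w$ to $T^1_{\bar x\to y_1}(w)\times\cdots\times T^p_{\bar x\to y_p}(w)\subset Y^p$. Given $\delta>0$, condition (1) supplies for each $i$ neighborhoods $V_i$ of $\bar x$ and $W_i$ of $y_i$ with $S_i(x)\cap W_i\subset S_i(\bar x)+T^i_{\bar x\to y_i}(x-\bar x)+\delta|x-\bar x|\mathbb{B}$ for $x\in V_i$. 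Intersecting the $V_i$ and setting $W:=W_1\times\cdots\times W_p$, every point of $F(x)\cap W$ decomposes coordinatewise, hence lies in $F(\bar x)+(T^1_{\bar x\to y_1}\times\cdots\times T^p_{\bar x\to y_p})(x-\bar x)+\delta|x-\bar x|\mathbb{B}_{Y^p}$ after absorbing a dimensional constant relating $\mathbb{B}_{Y^p}$ to the per-coordinate $\mathbb{B}$ into $\delta$. This is the step requiring care with the product norm and the error terms, though it is elementary.

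Next I would record that $g:Y^p\to Y$ is a continuous single-valued linear map, so by the strict case of Example following Definition \ref{def:single-valued-T-diff} it is strictly $g$-differentiable, hence pseudo strictly $g$-differentiable, at every point; this furnishes the map $T_{y\to\bar z}=g$ required by Theorem \ref{thm:Chain-rule}(2). Moreover $g(\mathbf 0)=\{\mathbf 0\}$ and $\lip g(\mathbf 0)=|g|<\infty$, so condition (6) holds with $\beta=|g|$. Conditions (3) and (4) of the chain rule are verbatim conditions (2) and (3) here (reading $\bar z=\bar y$), while the outer-norm estimate $|T^1_{\bar x\to y_1}\times\cdots\times T^p_{\bar x\to y_p}|^{+}\le\sum_i|T^i_{\bar x\to y_i}|^{+}$ reduces condition (5) to the finiteness of the $\alpha_i$ in condition (4). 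Finally, because $g$ sums coordinates, $g\circ(T^1_{\bar x\to y_1}\times\cdots\times T^p_{\bar x\to y_p})(w)=\sum_i T^i_{\bar x\to y_i}(w)$, so the chain-rule formula \eqref{eq:chain-rule-formula} collapses exactly to \eqref{eq:sum-rule-formula}, and Theorem \ref{thm:Chain-rule} yields the asserted pseudo outer $T$-differentiability of $g\circ F$. The pseudo strict assertion follows identically, replacing the product rule of the second paragraph by its pseudo strict analogue and invoking the pseudo strict clause of the chain rule.

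The main obstacle I anticipate is precisely the product rule of the second paragraph: verifying that a Cartesian product of pseudo outer $T^i$-differentiable maps is pseudo outer differentiable with the product prederivative, while tracking how $\mathbb{B}_{Y^p}$ relates to the coordinate balls $\mathbb{B}$. All remaining work is bookkeeping that translates the corollary's hypotheses into those of Theorem \ref{thm:Chain-rule}.
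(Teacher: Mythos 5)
Your proposal is correct and takes essentially the same route as the paper's own proof: both construct the Cartesian-product prederivative $T_{(y_{1},\dots,y_{p})}(w)=\big(T_{\bar{x}\to y_{1}}^{1}(w),\dots,T_{\bar{x}\to y_{p}}^{p}(w)\big)$ for $F$, note that the linear summing map $g$ automatically satisfies the hypotheses required of $G$, and then apply the chain rule of Theorem \ref{thm:Chain-rule} to $g\circ F$, with the composite formula collapsing to \eqref{eq:sum-rule-formula}. The only difference is that you spell out the bookkeeping (the coordinatewise verification of the product rule, the norm constant relating $\mathbb{B}_{Y^{p}}$ to the coordinate balls, and the outer-norm estimate giving condition (5)) which the paper compresses into the remark that the remaining hypotheses are ``just the appropriate conditions in the chain rule rephrased.''
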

\begin{proof}
For $y_{i}\in S_{i}(\bar{x})$ for all $i$, define $T_{(y_{1},\dots,y_{p})}:X\rightrightarrows Y^{p}$
by \[
T_{(y_{1},\dots,y_{p})}(w):=\big(T_{\bar{x}\to y_{1}}^{1}(w),\dots,T_{\bar{x}\to y_{p}}^{p}(w)\big).\]
Condition (1) implies that the map $F$ is pseudo outer $T_{(y_{1},\dots,y_{p})}$-differentiable
at $\bar{x}$ for $(y_{1},\dots,y_{p})$. We now proceed to apply
the chain rule in Theorem \ref{thm:Chain-rule}. Since $g$ is a linear
function, the conditions for $g$ needed for the chain rule are satisfied.
The rest of the conditions in this result are just the appropriate
conditions in the chain rule rephrased. The case of pseudo strict
$T$-differentiability is similar.
\end{proof}
Note that we have focused on pseudo (outer/ strict) $T$-differentiability
so far in this section. The relation between pseudo (strict/ outer/
inner) $T$-differentiability and (strict/ outer/ inner) $T$-differentiability
is illustrated by the following theorem. We say that $S:X\rightrightarrows Y$
is \emph{locally compact} around $\bar{x}\in\mbox{dom}(F)$ if there
is a neighborhood $O$ of $\bar{x}$ and a compact set $C\subset Y$
such that $S(O)\subset C$. 
\begin{thm}
\label{thm:T-diff-from-pseudo}($T$-differentiability from pseudo
$T$-differentiability) Let $S:D\rightrightarrows Y$ be a closed-valued
outer semicontinuous map on a closed domain $D\subset X$. Suppose
$S$ is locally compact around $\bar{x}\in D$. Then $S$ is outer
 $T$-differentiable at $\bar{x}$ if and only if $S$ is pseudo outer
 $T$-differentiable at $\bar{x}$ for all $\bar{y}\in S(\bar{x})$. 

An analogous statement holds for (strict/ inner) $T$-differentiability
and pseudo (strict/ inner) $T$-differentiability.\end{thm}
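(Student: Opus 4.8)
The plan is to prove both directions, with the forward implication being routine and the converse carrying all the content. For the forward direction, suppose $S$ is outer $T$-differentiable at $\bar{x}$. Given $\delta>0$, take the neighborhood $V$ of $\bar{x}$ guaranteed by Definition \ref{def:set-valued-diff}(a). For any $\bar{y}\in S(\bar{x})$ and \emph{any} neighborhood $W$ of $\bar{y}$ we have $S(x)\cap W\subset S(x)\subset S(\bar{x})+T(x-\bar{x})+\delta|x-\bar{x}|\mathbb{B}$ for all $x\in V$, which is exactly pseudo outer $T$-differentiability at $\bar{x}$ for $\bar{y}$. This needs no hypothesis on $S$.

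For the converse I would first extract compactness from the standing assumptions. Since $S$ is locally compact around $\bar{x}$, fix a neighborhood $O$ of $\bar{x}$ and a compact set $C\subset Y$ with $S(O)\subset C$, and pick $\rho>0$ with $C\subset\rho\mathbb{B}$. Because $S$ is closed-valued, $S(\bar{x})$ is a closed subset of the compact set $C$, hence compact, and $S(x)=S(x)\cap\rho\mathbb{B}$ for every $x\in O$. Now fix $\delta>0$. Pseudo outer $T$-differentiability supplies, for each $\bar{y}\in S(\bar{x})$, neighborhoods $V_{\bar{y}}$ of $\bar{x}$ and $W_{\bar{y}}$ of $\bar{y}$ with $S(x)\cap W_{\bar{y}}\subset S(\bar{x})+T(x-\bar{x})+\delta|x-\bar{x}|\mathbb{B}$ for all $x\in V_{\bar{y}}$. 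The open sets $\{W_{\bar{y}}\}_{\bar{y}\in S(\bar{x})}$ cover the compact set $S(\bar{x})$, so finitely many of them, say $W_{\bar{y}_1},\dots,W_{\bar{y}_n}$, already cover it; set $U:=\bigcup_{i=1}^n W_{\bar{y}_i}$ and $V':=O\cap\bigcap_{i=1}^n V_{\bar{y}_i}$.

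The remaining task, and the crux of the argument, is to replace $S(x)\cap U$ by the whole of $S(x)$. This is where outer semicontinuity is essential: $U$ is an open set containing $S(\bar{x})$, so by Definition \ref{def:osc-isc-cty}(2) applied with this $U$ and the radius $\rho$ above, there is a neighborhood $V''$ of $\bar{x}$ with $S(x)\cap\rho\mathbb{B}\subset U$ for all $x\in V''$. Put $V:=V'\cap V''$. For $x\in V$ we then have $S(x)=S(x)\cap\rho\mathbb{B}\subset U$, hence $S(x)=\bigcup_{i=1}^n\big(S(x)\cap W_{\bar{y}_i}\big)\subset S(\bar{x})+T(x-\bar{x})+\delta|x-\bar{x}|\mathbb{B}$, which is outer $T$-differentiability. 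The strict case is identical: one instead quotes pseudo strict $T$-differentiability to get $S(x)\cap W_{\bar{y}_i}\subset S(x')+T(x-x')+\delta|x-x'|\mathbb{B}$ for all $x,x'\in V_{\bar{y}_i}$, and the same covering-plus-outer-semicontinuity step yields $S(x)\subset S(x')+T(x-x')+\delta|x-x'|\mathbb{B}$ for all $x,x'\in V$.

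For the inner case the mechanism is slightly different and, in fact, easier, because the set being intersected with $W$ is now the fixed set $S(\bar{x})$ rather than the moving set $S(x)$. Covering the compact set $S(\bar{x})$ by finitely many $W_{\bar{y}_i}$ gives $S(\bar{x})=\bigcup_i\big(S(\bar{x})\cap W_{\bar{y}_i}\big)$ directly, and since each piece satisfies $S(\bar{x})\cap W_{\bar{y}_i}\subset S(x)-T(x-\bar{x})+\delta|x-\bar{x}|\mathbb{B}$ on $V_{\bar{y}_i}$, taking the union over $i$ (the right-hand side being the same set for every $i$) yields the desired inclusion on $V:=\bigcap_i V_{\bar{y}_i}$; here only compactness of $S(\bar{x})$ is used, not outer semicontinuity. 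The main obstacle throughout is precisely the outer/strict step of passing from the intersected images $S(x)\cap W$ to all of $S(x)$: without local compactness to bound $S(x)$ and outer semicontinuity to trap its image inside the finite cover $U$, points of $S(x)$ could escape every $W_{\bar{y}_i}$ and the global inclusion would fail.
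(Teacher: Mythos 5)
Your proof is correct, and it is essentially the argument the paper has in mind: the paper does not write out a proof but states (in the remark following the theorem) that it can be adapted from the proof of Theorem 1.42 in Mordukhovich's book, whose mechanism is exactly your compactness-of-$S(\bar{x})$ plus finite-subcover plus outer-semicontinuity-to-trap-$S(x)$-inside-the-cover argument. Your writeup also correctly isolates where each hypothesis enters (local compactness and closed-valuedness for compactness of $S(\bar{x})$ and boundedness of $S(x)$, outer semicontinuity only for the outer/strict cases), which is consistent with the paper's remark that outer semicontinuity can even be weakened to closedness at $\bar{x}$.
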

\begin{rem}
In fact, the hypothesis of outer semicontinuity in Theorem \ref{thm:T-diff-from-pseudo}
can be weakened: We can assume that $S$ is \emph{closed} at $\bar{x}$:
For every $y\notin S(\bar{x})$, there are neighborhoods $U$ of $\bar{x}$
and $V$ of $y$ such that $S(x)\cap V=\emptyset$ for all $x\in U$.
If $S$ is outer semicontinuous, then $\gph(S)$ is closed by \cite[Proposition 1.4.8]{AF90},
which implies that $S$ is closed at $\bar{x}$. The proof of Theorem
\ref{thm:T-diff-from-pseudo} can be easily adapted from the proof
of \cite[Theorem 1.42]{Mor06}, which traces its roots in the finite
dimensional case to \cite{Roc85}.
\end{rem}
Other calculus rules that are important are the Cartesian product
of set-valued maps, and rules for unions. These two operations are
simple to formulate and prove. For intersections of set-valued maps,
we feel it is more effective to look at the normal cones of the intersections
of the graph of the appropriate functions and apply the Mordukhovich
criterion. See Section \ref{sec:Boris-extended}. We close this section
by referring the reader to \cite{Roc85} for more applications of
set-valued chain rules.

\section{\label{sec:Boris-extended}The Mordukhovich criterion}

As we have seen in Section \ref{sec:set-valued-diff}, the Aubin property
gives a sharper analysis for the Lipschitz continuity of set-valued
maps. An effective tool for calculating the graphical modulus (for
the Aubin property) is the coderivative defined in Definition \ref{def:coderivative},
which is a generalization of the adjoint linear operator of linear
functions. Coderivatives enjoy an effective calculus, and can be easily
calculated for set-valued maps whose graphs are defined by smooth
maps. The relationship between the Aubin property and the coderivatives
is referred to as the Mordukhovich criterion in \cite{RW98}. For
a history of the Mordukhovich criterion, see the bibliography in \cite{RW98},
which in turn cited \cite{DMO81,Iof87,Kru88,Mor88,War81}, and also
Commentaries 1.4.6--1.4.9, 4.5.2 and 4.5.6 of \cite{Mor06}. The aim
of this section is to show that coderivatives in fact give directional
behavior that is captured in the language of pseudo strict $T$-differentiability.

We now recall the classical definition of the normal cones and coderivatives
in finite dimensions, which were first introduced in \cite{Mor76}
and \cite{Mor80} respectively.
\begin{defn}
\cite{Mor76} (Normals) Let $C\subset\mathbb{R}^{n}$ and $\bar{x}\in C$.
A vector $v$ is \emph{normal to $C$ at $\bar{x}$ in the regular
sense}, or a \emph{regular normal}, written $v\in\hat{N}_{C}(\bar{x})$,
if \[
\left\langle v,x-\bar{x}\right\rangle \leq o(|x-\bar{x}|)\mbox{ for }x\in C.\]
It is \emph{normal to $C$ at $\bar{x}$ in the general sense} (or
\emph{limiting normal}, or \emph{Mordukhovich normal}, or simply a
\emph{normal }vector), written $v\in N_{C}(\bar{x})$, if there are
sequences $x_{i}\xrightarrow[C]{}\bar{x}$ and $v_{i}\rightarrow v$
with $v_{i}\in\hat{N}_{C}(x_{i})$. 
\begin{defn}
\cite{Mor80} (Coderivatives) \label{def:coderivative}Consider a
mapping $S:\mathbb{R}^{n}\rightrightarrows\mathbb{R}^{m}$, a point
$\bar{x}\in\dom(S)$ and $\bar{y}\in S(\bar{x})$. The \emph{coderivative
(or limiting coderivative, or Mordukhovich coderivative) at $\bar{x}$
for $\bar{y}$} is the mapping $D^{*}S(\bar{x}\mid\bar{y}):\mathbb{R}^{m}\rightrightarrows\mathbb{R}^{n}$
defined by\[
v\in D^{*}S(\bar{x}\mid\bar{y})(z)\iff(v,-z)\in N_{\scriptsize\gph(S)}(\bar{x},\bar{y}).\]

\end{defn}
\end{defn}
In the case where $S$ is a smooth map, the coderivative $D^{*}S(\bar{x}\mid S(\bar{x}))$
is the adjoint of the derivative mapping there. 

Next, we recall the definition of the regular subdifferential and
general subdifferential, which are important in the proof in the main
result of this section.
\begin{defn}
\label{def:more-subdif}(Subdifferentials) Consider a function $f:\mathbb{R}^{n}\rightarrow\mathbb{R}$
and a point $\bar{x}\in\mathbb{R}^{n}$. For a vector $v\in\mathbb{R}^{n}$, 
\begin{enumerate}
\item [(a)]$v$ is a \emph{regular subgradient} of $f$ at $\bar{x}$,
written $v\in\hat{\partial}f(\bar{x})$, if \[
f(x)\geq f(\bar{x})+\left\langle v,x-\bar{x}\right\rangle +o(|x-\bar{x}|);\]

\item [(b)]$v$ is a \emph{(general) subgradient }of $f$ at $\bar{x}$,
written $v\in\partial f(\bar{x})$, if there are sequences $x_{i}\rightarrow\bar{x}$
and $v_{i}\in\hat{\partial}f(x_{i})$ with $v_{i}\rightarrow v$ and
$f(x_{i})\rightarrow f(\bar{x})$.
\item [(c)]The sets $\hat{\partial}f(\bar{x})$ and $\partial f(\bar{x})$
are the \emph{regular subdifferential} (or \emph{Fr\'{e}chet subgradient})
and the \emph{(general) subdifferential} (or \emph{limiting subdifferential},
or \emph{Mordukhovich subdifferential}) at $\bar{x}$ respectively.
\end{enumerate}
\end{defn}
We now present our main result of this section.
\begin{thm}
\label{thm:Boris-criterion-precise}(Mordukhovich criterion revisited)
Consider $S:\mathbb{R}^{n}\rightrightarrows\mathbb{R}^{m}$, $\bar{x}\in\dom(S)$
and $\bar{y}\in S(\bar{x})$. Suppose $\gph(S)$ is locally closed
at $(\bar{x},\bar{y})$. If $D^{*}S(\bar{x}\mid\bar{y})(\mathbf{0})=\{\mathbf{0}\}$,
or equivalently, $|D^{*}S(\bar{x}\mid\bar{y})|^{+}<\infty$, then
$S$ is pseudo strictly $T$-differentiable at $\bar{x}$ for $\bar{y}$,
where $T:\mathbb{R}^{n}\rightrightarrows\mathbb{R}^{m}$ is defined
by\[
T(w)=\kappa(w)\mathbb{B}^{m},\]
where $\kappa:\mathbb{R}^{n}\rightarrow\mathbb{R}_{+}$ is defined
by \begin{eqnarray*}
\kappa(w) & := & \max\left\{ \left\langle -v,w\right\rangle \mid v\in D^{*}S(\bar{x}\mid\bar{y})(\mathbb{B}^{m})\right\} \\
 & = & \max\left\{ \left\langle -v,w\right\rangle \mid(v,-z)\in N_{\scriptsize\gph(S)}(\bar{x},\bar{y})\mbox{ for some }|z|\leq1\right\} .\end{eqnarray*}

\end{thm}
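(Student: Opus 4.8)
The plan is to argue by contradiction: I will convert the failure of pseudo strict $T$-differentiability into a statement about sequences, apply the approximate mean value theorem to the distance function $x\mapsto d(y,S(x))$, translate the resulting regular subgradients into regular normals of $\gph(S)$, and then extract in the limit a single coderivative element that violates the support-function identity defining $\kappa$. Before starting I would record two preliminary facts. First, since the hypothesis is equivalent to $|D^{*}S(\bar{x}\mid\bar{y})|^{+}<\infty$, the set $A:=D^{*}S(\bar{x}\mid\bar{y})(\mathbb{B}^{m})=\{v\mid(v,-z)\in N_{\gph(S)}(\bar{x},\bar{y}),\ |z|\leq1\}$ is compact (it is the continuous image of the compact set $\{(z,v)\mid|z|\leq1,\ (v,-z)\in N_{\gph(S)}(\bar{x},\bar{y})\}$), so $\kappa(w)=\max_{v\in A}\langle-v,w\rangle$ is a genuine maximum and $\kappa$, being the support function of $-A$, is finite, sublinear and Lipschitz. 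Second, by the classical Mordukhovich criterion (\cite[Theorem 9.40]{RW98}), the condition $D^{*}S(\bar{x}\mid\bar{y})(\mathbf{0})=\{\mathbf{0}\}$ already forces $S$ to have the Aubin property at $\bar{x}$ for $\bar{y}$ with some finite modulus $\kappa_{0}$; I use this only to guarantee that the fibres $S(x')$ are nonempty near $\bar{y}$ and that $d(y,S(x'))\leq\kappa_{0}|x-x'|$ whenever $y\in S(x)\cap W$. The real content is to sharpen the constant bound $\kappa_{0}|w|$ to the directional bound $\kappa(w)$.

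Negating the definition and using $d\big(y,S(x')+\kappa(x-x')\mathbb{B}^{m}\big)=\big(d(y,S(x'))-\kappa(x-x')\big)_{+}$, I obtain a fixed $\delta>0$ and sequences $x_{k},x_{k}'\to\bar{x}$ with $x_{k}\neq x_{k}'$, together with $y_{k}\in S(x_{k})$, $y_{k}\to\bar{y}$, such that $d(y_{k},S(x_{k}'))>\kappa(x_{k}-x_{k}')+\delta|x_{k}-x_{k}'|$. Applying the approximate mean value theorem (see \cite{RW98,Mor06}) to the lower semicontinuous function $g_{k}(x):=d(y_{k},S(x))$, which satisfies $g_{k}(x_{k})=0$ and has $g_{k}(x_{k}')<\infty$ by the Aubin bound, and making a diagonal choice of the approximating points, I produce $\tilde{c}_{k}\to\bar{x}$ and \emph{regular} subgradients $v_{k}\in\hat{\partial}g_{k}(\tilde{c}_{k})$ with $\langle v_{k},x_{k}'-x_{k}\rangle\geq d(y_{k},S(x_{k}'))-\tfrac{1}{k}|x_{k}'-x_{k}|$.

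The key link is to write $g_{k}(x)=\inf_{w}\{|y_{k}-w|+\iota_{\gph(S)}(x,w)\}$ and apply the marginal-function subdifferential rule together with the sum rule: this yields a projection $\hat{y}_{k}\in S(\tilde{c}_{k})$ with $|y_{k}-\hat{y}_{k}|=g_{k}(\tilde{c}_{k})=:s_{k}$ and a vector $z_{k}\in\mathbb{B}^{m}$ (equal to $(y_{k}-\hat{y}_{k})/s_{k}$ when $s_{k}>0$, and of norm at most one when $s_{k}=0$) such that $(v_{k},z_{k})\in\hat{N}_{\gph(S)}(\tilde{c}_{k},\hat{y}_{k})$, that is $v_{k}\in\hat{D}^{*}S(\tilde{c}_{k}\mid\hat{y}_{k})(-z_{k})$. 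The Aubin bound gives $s_{k}\leq\kappa_{0}|x_{k}-\tilde{c}_{k}|\to0$, hence $\hat{y}_{k}\to\bar{y}$. Boundedness of $\{v_{k}\}$ now follows from the hypothesis alone: if $|v_{k}|\to\infty$ along a subsequence, then $v_{k}/|v_{k}|\in\hat{D}^{*}S(\tilde{c}_{k}\mid\hat{y}_{k})(-z_{k}/|v_{k}|)$, and a convergent subsequence produces a unit vector in $D^{*}S(\bar{x}\mid\bar{y})(\mathbf{0})$ (directly from the definition of the limiting normal cone, since $z_{k}/|v_{k}|\to\mathbf{0}$), contradicting $D^{*}S(\bar{x}\mid\bar{y})(\mathbf{0})=\{\mathbf{0}\}$. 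Passing to subsequences, $v_{k}\to\bar{v}$ and $z_{k}\to\bar{z}$ with $(\bar{v},\bar{z})\in N_{\gph(S)}(\bar{x},\bar{y})$ and $|\bar{z}|\leq1$, so $\bar{v}\in A$.

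Finally I normalise. Writing $u_{k}:=(x_{k}'-x_{k})/|x_{k}'-x_{k}|$ and dividing the mean value inequality by $|x_{k}'-x_{k}|$, positive homogeneity gives $\langle v_{k},u_{k}\rangle>\kappa(-u_{k})+\delta-\tfrac1k$. Taking a further subsequence so that $u_{k}\to\bar{u}$ (a unit vector) and using continuity of $\kappa$ together with $v_{k}\to\bar{v}$, I obtain $\langle\bar{v},\bar{u}\rangle\geq\kappa(-\bar{u})+\delta$. But $\kappa(-\bar{u})=\max_{v\in A}\langle v,\bar{u}\rangle\geq\langle\bar{v},\bar{u}\rangle$ because $\bar{v}\in A$, so $\langle\bar{v},\bar{u}\rangle\geq\langle\bar{v},\bar{u}\rangle+\delta$, a contradiction. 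I expect the main obstacle to be the middle step: correctly establishing (with all sign conventions) that regular subgradients of the distance function $d(y_{k},S(\cdot))$ correspond to regular normals $(v_{k},z_{k})\in\hat{N}_{\gph(S)}$ with $z_{k}\in\mathbb{B}^{m}$ — in particular handling the degenerate case $s_{k}=0$ where the inner norm is nonsmooth — and then marshalling the various converging objects ($\tilde{c}_{k},\hat{y}_{k},v_{k},z_{k},u_{k}$) so that the limiting normal lands exactly in $A$ while the direction $\bar{u}$ survives to the final inequality.
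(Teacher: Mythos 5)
Your overall architecture is sound and genuinely different from the paper's proof: you argue by contradiction, run an approximate (Zagrodny-type) mean value theorem for Fr\'{e}chet subgradients on $g_{k}(x)=d(y_{k},S(x))$, convert distance-function subgradients into normals of $\gph(S)$, and extract in the limit a single coderivative element violating the support-function definition of $\kappa$. The paper instead argues directly: it re-enters the sufficiency proof of \cite[Theorem 9.40]{RW98}, uses outer semicontinuity of the limiting normal cone to pull normals at nearby points back to $(\bar{x},\bar{y})$, and then applies Lebourg's mean value theorem (Theorem \ref{thm:Lebourg-MVT}) to the Clarke subdifferential $\partial_{C}d_{\tilde{y}}(\cdot)=\conv\big(\partial d_{\tilde{y}}(\cdot)\big)$, obtaining the inequality $d_{\tilde{y}}(x^{\prime})\geq d_{\tilde{y}}(x)-\kappa(x^{\prime}-x)-\bar{\theta}|x^{\prime}-x|$ for all pairs $x,x^{\prime}$ at once. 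Your route trades that convexification step for a sequential compactness argument, and your use of the classical Mordukhovich criterion as a black box (only for nonemptiness of nearby fibres and finiteness of $g_{k}(x_{k}^{\prime})$) is legitimate and arguably cleaner than reusing the internals of the RW98 proof.

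There is, however, one false claim in the middle step, exactly where you anticipated trouble: in the degenerate case $s_{k}=0$ there need not exist \emph{any} $z_{k}\in\mathbb{B}^{m}$ with $(v_{k},z_{k})\in\hat{N}_{\scriptsize\gph(S)}(\tilde{c}_{k},y_{k})$. Concretely, take $S:\mathbb{R}\rightrightarrows\mathbb{R}^{2}$, $S(x)=\{w\in\mathbb{R}^{2}\mid|w|=|x|\}$, $y_{0}=\mathbf{0}$, $\hat{x}=0$. Then $g(x)=d(\mathbf{0},S(x))=|x|$, so $v=1\in\hat{\partial}g(0)$, and the nearest point is $\hat{y}=\mathbf{0}$, i.e.\ $s=0$; yet testing a candidate regular normal $(a,z)$ against graph points $(t,\pm tz/|z|)$ with $t>0$ and $t<0$ forces $a\leq-|z|$ and $a\geq|z|$, so $\hat{N}_{\scriptsize\gph(S)}(0,\mathbf{0})=\{\mathbf{0}\}$ and no pair $(1,z)$ is a regular normal. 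Note that $D^{*}S(0\mid\mathbf{0})(\mathbf{0})=\{0\}$ in this example, so the degenerate case genuinely occurs under the hypotheses of the theorem; the exact sum rule you invoke is only available when $s_{k}>0$, where the norm term is smooth. The repair is standard and leaves your architecture intact: the correct statement (this is precisely \cite[9(23)]{RW98}, quoted as \eqref{eq:9(23)-better} in the paper's proof) yields a \emph{limiting} normal $(v_{k},z_{k})\in N_{\scriptsize\gph(S)}(\tilde{c}_{k},\hat{y}_{k})$ with $|z_{k}|\leq1$; alternatively, apply the fuzzy sum rule to $|y_{k}-\cdot|+\iota_{\scriptsize\gph(S)}$ at $(\tilde{c}_{k},y_{k})$ to recover a regular normal at a slightly perturbed graph point. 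Either way, both of your limit arguments survive, because in finite dimensions $(x,y)\mapsto N_{\scriptsize\gph(S)}(x,y)$ is outer semicontinuous relative to $\gph(S)$ \cite[Proposition 6.6]{RW98}: unboundedness of $\{v_{k}\}$ still produces a nonzero element of $D^{*}S(\bar{x}\mid\bar{y})(\mathbf{0})$, and the limit $(\bar{v},\bar{z})$ still lands in $N_{\scriptsize\gph(S)}(\bar{x},\bar{y})$, so $\bar{v}\in A$ and the final contradiction $\langle\bar{v},\bar{u}\rangle\geq\kappa(-\bar{u})+\delta\geq\langle\bar{v},\bar{u}\rangle+\delta$ stands. (A last cosmetic point: since $\gph(S)$ is only \emph{locally} closed, you should first replace $S$ by its localization $\gph(S)\cap(\mathbb{B}(\bar{x},\rho)\times\mathbb{B}(\bar{y},\rho))$ so that each $g_{k}$ is genuinely lower semicontinuous; the paper inherits the same localization from RW98.)
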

The assumptions of Theorem \ref{thm:Boris-criterion-precise} are
the same as that of the Mordukhovich Criterion as stated in \cite[Theorem 9.40]{RW98}.
The Mordukhovich criterion establishes the equivalence between the
Aubin property (with graphical modulus $\kappa^{*}:=\max_{|w|\leq1}\kappa(w)$)
and the outer norm of the coderivative. Theorem \ref{thm:Boris-criterion-precise}
shows that the worst case Lipschitzian behavior need not occur in
all directions. 

Rockafellar \cite{Roc85} established the relationship between the
Aubin property of $S$ at $\bar{x}$ for $\bar{y}$ and the Lipschitz
continuity properties of $d_{\tilde{y}}(\cdot)$. (See \cite[Exercise 9.37]{RW98}.)
The key to the proof of Theorem \ref{thm:Boris-criterion-precise}
is that the nonsmooth mean value theorem on $d_{\tilde{y}}(\cdot)$
gives us more information on the continuity properties of $S$. 
\begin{proof}
(of Theorem \ref{thm:Boris-criterion-precise}) We highlight only
the parts we need to add to the proof of sufficiency as given in \cite[Theorem 9.40]{RW98},
though with some changes to the variable names. Let $\bar{\kappa}=\left|D^{*}S(\bar{x}\mid\bar{y})\right|^{+}$.
By the definition of $\bar{\kappa}$ and the coderivatives, for any
$\theta>0$, there exists $\delta_{0}>0$ and $\epsilon_{0}>0$ for
which\[
N_{\scriptsize\gph(S)}(\hat{x},\hat{y})\cap(\mathbb{R}^{n}\times\mathbb{B}^{m})\subset(\bar{\kappa}+\theta)\mathbb{B}^{n}\times\mathbb{B}^{m}\mbox{ for all }\hat{x}\in\mathbb{B}(\bar{x},\delta_{0}),\hat{y}\in\mathbb{B}(\bar{y},\epsilon_{0}).\]
(The above statement is just \cite[9(22)]{RW98} rephrased.) The mapping
to normal cones is outer semicontinuous, so \begin{equation}
\begin{array}{l}
N_{\scriptsize\gph(S)}(\hat{x},\hat{y})\cap[(\bar{\kappa}+\theta)\mathbb{B}^{n}\times\mathbb{B}^{m}]\subset N_{\scriptsize\gph(S)}(\bar{x},\bar{y})+\theta\mathbb{B}^{n+m}\\
\qquad\qquad\qquad\qquad\qquad\qquad\mbox{ for all }\hat{x}\in\mathbb{B}(\bar{x},\delta_{0}),\hat{y}\in\mathbb{B}(\bar{y},\epsilon_{0}).\end{array}\label{eq:osc-normals}\end{equation}
Define the map $d_{\tilde{y}}(x):\mathbb{R}^{n}\rightarrow\mathbb{R}$
by $d_{\tilde{y}}(x):=d(\tilde{y},S(x))$. In their proof, the condition
in \cite[9(23)]{RW98} can be written more precisely as:\begin{equation}
\left\{ \begin{array}{l}
\mbox{if }v\in\hat{\partial}d_{\tilde{y}}(\hat{x})\mbox{ with }|\hat{x}-\bar{x}|\leq\delta_{0}\\
\mbox{and }d_{\tilde{y}}\left(\hat{x}\right)+|\tilde{y}-\bar{y}|\leq\epsilon_{0}\mbox{, then }\\
\mbox{there exists }z\in\mathbb{B}\mbox{ with }(v,-z)\in N_{\scriptsize\gph(S)}(\hat{x},\hat{y}),\end{array}\right.\label{eq:9(23)-better}\end{equation}
where $\hat{y}$ is any point of $S(\hat{x})$ nearest to $\tilde{y}$,
that is $|\hat{y}-\tilde{y}|=d_{\tilde{y}}(\hat{x})$. Moving on,
they proved that the map $(x,y)\mapsto d(y,S(x))$ is Lipschitz continuous
near $(\bar{x},\bar{y})$, or more precisely, there exists some $\lambda>0$
and $\mu>0$ such that \begin{equation}
d(y,S(x))\leq\lambda(|x-\bar{x}|+|y-\bar{y}|)\mbox{ when }|x-\bar{x}|\leq\mu\mbox{ and }|y-\bar{y}|\leq\mu.\label{eq:9.40-formula}\end{equation}

Furnished with this, we can choose $\delta>0$ and $\epsilon>0$ small
enough that $2\delta\leq\min\{\mu,\delta_{0}\}$, $\epsilon\leq\min\{\mu,\epsilon_{0}/2\}$
and $\lambda(2\delta+\epsilon)\leq\epsilon_{0}/2$. If $v\in\hat{\partial}d_{\tilde{y}}(\hat{x})$,
$|\hat{x}-\bar{x}|\leq2\delta$ and $|\tilde{y}-\bar{y}|\leq\epsilon$,
then by \eqref{eq:9.40-formula}, we have\[
d_{\tilde{y}}(\hat{x})\leq\lambda(|\hat{x}-\bar{x}|+|\tilde{y}-\bar{y}|)\leq\lambda(2\delta+\epsilon)\leq\frac{\epsilon_{0}}{2}.\]
Formula \eqref{eq:9(23)-better} then tells us that for any $v\in\hat{\partial}d_{\tilde{y}}(\hat{x})$,
there exists some $z\in\mathbb{B}$ such that $(v,-z)\in N_{\scriptsize\gph(S)}(\hat{x},\hat{y})$. 

By \eqref{eq:osc-normals}, there is some $(\tilde{v},-\tilde{z})\in N_{\scriptsize\gph(S)}(\bar{x},\bar{y})$
such that $\left|(\tilde{v},-\tilde{z})-(v,-z)\right|\leq\theta$.
This gives $\tilde{v}\in D^{*}S(\bar{x}\mid\bar{y})((1+\theta)\mathbb{B}^{m})$,
or $\frac{1}{1+\theta}\tilde{v}\in D^{*}S(\bar{x}\mid\bar{y})(\mathbb{B}^{m})$.
Further arithmetic gives $|v-\frac{1}{1+\theta}\tilde{v}|\leq|v-\tilde{v}|+|\frac{\theta}{1+\theta}\tilde{v}|\leq\theta(1+\frac{\bar{\kappa}}{1+\theta})$,
so $v\in D^{*}S(\bar{x}\mid\bar{y})(\mathbb{B}^{m})+\theta(1+\frac{\bar{\kappa}}{1+\theta})\mathbb{B}^{n}$.
Let $\bar{\theta}$ be $\theta(1+\frac{\bar{\kappa}}{1+\theta})$.
We thus have $\partial d_{\tilde{y}}(\hat{x})\subset D^{*}S(\bar{x}\mid\bar{y})(\mathbb{B})+\bar{\theta}\mathbb{B}$.
The function $d_{\tilde{y}}(\cdot)$ is Lipschitz, so the Clarke subdifferential
equals $\conv(\partial d_{\tilde{y}}(\cdot))$ in $\mathbb{B}(\bar{y},\epsilon)$.

Choose any two points $x,x^{\prime}\in\mathbb{B}(\bar{x},\delta)$.
By the nonsmooth mean value theorem (Theorem \ref{thm:Lebourg-MVT}),
we have $d_{\tilde{y}}(x)-d_{\tilde{y}}(x^{\prime})=\left\langle v,x-x^{\prime}\right\rangle $
for some $v\in\partial_{C}d_{\tilde{y}}(x_{\tau})=\conv(\partial d_{\tilde{y}}(x_{\tau}))$,
where $x_{\tau}=\tau x+(1-\tau)x^{\prime}$ for some $\tau\in(0,1)$.
Now, \begin{eqnarray*}
\left\langle v,x-x^{\prime}\right\rangle  & \leq & \max\left\{ \left\langle \tilde{v},x-x^{\prime}\right\rangle \mid\tilde{v}\in\partial_{C}d_{\tilde{y}}(x_{\tau})\right\} \\
 & \leq & \max\left\{ \left\langle \tilde{v},x-x^{\prime}\right\rangle \mid\tilde{v}\in D^{*}S(\bar{x}\mid\bar{u})(\mathbb{B})+\bar{\theta}\mathbb{B}\right\} \\
 & = & \kappa(x^{\prime}-x)+\bar{\theta}|x^{\prime}-x|.\end{eqnarray*}
This can be rephrased as $d_{\tilde{y}}(x^{\prime})\geq d_{\tilde{y}}(x)-\kappa(x^{\prime}-x)-\bar{\theta}|x^{\prime}-x|$.
As $\tilde{y}$ varies over all points in $\mathbb{B}(\bar{y},\epsilon)$,
this readily gives $S(x^{\prime})\cap\mathbb{B}(\bar{y},\epsilon)\subset S(x)+T(x^{\prime}-x)+\bar{\theta}|x^{\prime}-x|\mathbb{B}^{m}$,
which is what we seek to prove.
\end{proof}
We close this section with a remark on Theorem \ref{thm:Boris-criterion-precise}.
\begin{rem}
\label{rem:more-precise-T}(More precise $T$-differentiability) Suppose
$S:\mathbb{R}^{n}\rightrightarrows\mathbb{R}^{m}$, and $\bar{y}\in S(\bar{x})$.
To obtain a better positively homogeneous map $T:\mathbb{R}^{n}\rightrightarrows\mathbb{R}^{m}$
than the one stated in Theorem \ref{thm:Boris-criterion-precise},
one applies Theorem \ref{thm:Boris-criterion-precise} on the map
$g\circ S+f:\mathbb{R}^{n}\rightrightarrows\mathbb{R}^{m}$, where
$f:\mathbb{R}^{n}\rightarrow\mathbb{R}^{m}$ and $g:\mathbb{R}^{m}\rightarrow\mathbb{R}^{m}$
are linear maps and $g$ is invertible. This approach is equivalent
to looking at the normal cones of $\gph(g\circ S+f)$, which can also
be obtained by performing the linear map $(x,y)\mapsto(x,g(y)+f(x))$
on $\gph(S)\subset\mathbb{R}^{n}\times\mathbb{R}^{m}$. By appealing
to the calculus rules in Section \ref{sec:Calculus}, this gives another
$\tilde{T}:\mathbb{R}^{n}\rightrightarrows\mathbb{R}^{m}$ for which
$S$ is pseudo strictly $\tilde{T}$-differentiable. If we choose
finitely many $\{(f_{i},g_{i})\}_{i}$, then we can define $T^{\prime}:\mathbb{R}^{n}\rightrightarrows\mathbb{R}^{m}$
by\[
T^{\prime}(w)=\tilde{T}_{i}(w),\mbox{ where }i\mbox{ is determined uniquely by }w.\]
It is easy to see that $S$ is $T^{\prime}$-differentiable at $\bar{x}$
for $\bar{y}$.
\end{rem}
For a semialgebraic set-valued map $S:X\rightrightarrows\mathbb{R}^{m}$,
where $X\subset\mathbb{R}^{n}$ (i.e., a set-valued map whose graph
in $\mathbb{R}^{n}\times\mathbb{R}^{m}$ is a finite union of sets
defined by finitely many polynomial inequalities), the approach in
Remark \ref{rem:more-precise-T} was used in \cite{DP10} to prove
that for all $x\in X$ outside a set of smaller dimension than $X$,
given any $y\in S(x)$, we can find a linear map $T:\mathbb{R}^{n}\to\mathbb{R}^{m}$
such that $S$ is pseudo strictly $T$-differentiable at $x$ for
$y$. Note however, as illustrated in Example \ref{exa:Nonunique-linear},
that the linear map may not be unique.

\section{\label{sec:extended-metric-regularity}Metric regularity and open
covering}

For $S:X\rightrightarrows Y$, it is well known that the Aubin property
of $S^{-1}$ is related to the metric regularity and open covering
properties of $S$. In this section, we study metric regularity and
open covering in a more axiomatic manner with the help of $T$-differentiability,
proving new relations in these subjects. We caution that for much
of this section, we need $T:Y\rightrightarrows X$ instead of $T:X\rightrightarrows Y$.

We begin with our definitions of generalized metric regularity and
open covering.
\begin{defn}
($T$-metric regularity) \label{def:T-metric-regularity}Let $X$
and $Y$ be Banach spaces, $S:X\rightrightarrows Y$ be a set-valued
map  and $T:Y\rightrightarrows X$ be positively homogeneous. We
say that $S$ is \emph{$T$-metrically regular} at $(\bar{x},\bar{y})\in\gph(S)$
if for any $\delta>0$, there exist neighborhoods $V$ of $\bar{x}$
and $W$ of $\bar{y}$ and $r>0$ such that, for any $x\in V$ and
any set $A\subset r\mathbb{B}$, (or equivalently, for any set $A\subset r\mathbb{B}$
containing exactly one element), \begin{equation}
y\in[S(x)+A]\cap W\mbox{ implies }x\in S^{-1}(y)+(T+\delta)(A).\label{eq:T-MR-defn}\end{equation}

\begin{defn}
($T$-open covering)\label{def:T-openness} Let $X$ and $Y$ be Banach
spaces, $S:X\rightrightarrows Y$ be a set-valued map and $T:Y\rightrightarrows X$
be positively homogeneous. We say that $S$ is a \emph{$T$-open covering
}at $(\bar{x},\bar{y})\in\gph(S)$ if for any $\delta>0$, there exist
neighborhoods $V$ of $\bar{x}$ and $W$ of $\bar{y}$ and $r>0$
such that, for any $x\in V$ and any set $A\subset r\mathbb{B}$ (or
equivalently, for any set $A\subset r\mathbb{B}$ containing exactly
one element),\begin{equation}
[S(x)+A]\cap W\subset S\big(x+(T+\delta)(A)\big).\label{eq:T-open-defn}\end{equation}

\end{defn}
\end{defn}
Open covering is sometimes known as \emph{linear openness}. Setting
$T(w):=\kappa|w|\mathbb{B}$ in both cases reduce the above definitions
to the classical definitions of metric regularity and open covering
with modulus $\kappa$. An easy way to see the equivalence of $T$-metric
regularity of $S$ at $(\bar{x},\bar{y})\in\gph(S)$ with $T(w):=\kappa|w|\mathbb{B}$
and metric regularity of $S$ at $(\bar{x},\bar{y})$ with modulus
$\kappa$ is to observe that they are both equivalent to condition
(IT) in Theorem \ref{thm:T-equivalences} below. (This argument makes
use of the classical fact that $S^{-1}$ has the Aubin property at
$\bar{y}$ for $\bar{x}$ if and only if $S$ is metric regular at
$(\bar{x},\bar{y})$ with the same moduli.) The argument for open
coverings is similar. We will discuss metric regularity in greater
detail at the end of this section.
\begin{thm}
\label{thm:T-equivalences}($T$-metric regularity and $T$-openness)
Let $X$ and $Y$ be Banach spaces and $T:Y\rightrightarrows X$ be
a positively homogeneous map. For $S:X\rightrightarrows Y$, the following
are equivalent:
\begin{enumerate}
\item [(MR)]$S$ is $T(-\cdot)$-metric regular at $(\bar{x},\bar{y})\in\gph(S)$.
\item [(OC)]$S$ is a $(-T(-\cdot))$-open covering at $(\bar{x},\bar{y})\in\gph(S)$.
\item [(IT)]$S^{-1}$ is pseudo strictly $T$-differentiable at $\bar{y}$
for $\bar{x}$. 
\end{enumerate}
\end{thm}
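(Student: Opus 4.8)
The plan is to prove the cycle by first observing that, once the definitions are unwound on singletons, (MR) and (OC) are \emph{the same} implication, and then matching (MR) with (IT) through the substitution $y' = y - a$ together with some bookkeeping of neighborhoods. Both Definition \ref{def:T-metric-regularity} and Definition \ref{def:T-openness} reduce to their singleton form $A = \{a\}$, which the definitions already grant is equivalent, so throughout I would work with a single $a$ of small norm.

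First I would settle (MR) $\Leftrightarrow$ (OC). Fix $\delta > 0$, take $x$ and $a$ with $|a|$ small, and note that $y \in [S(x)+a]\cap W$ means exactly $y - a \in S(x)$, i.e. $x \in S^{-1}(y-a)$. For the map $T(-\cdot)$, the (MR) conclusion is $x \in S^{-1}(y) + T(-a) + \delta|a|\mathbb{B}$, since $(T(-\cdot)+\delta)(a) = T(-a)+\delta|a|\mathbb{B}$. For the map $-T(-\cdot)$, the (OC) conclusion $y \in S\big(x + (-T(-\cdot)+\delta)(a)\big)$ says there is $u \in -T(-a)+\delta|a|\mathbb{B}$ with $x + u \in S^{-1}(y)$; writing $w = -u$ and using that $\delta|a|\mathbb{B}$ is symmetric gives $w \in T(-a)+\delta|a|\mathbb{B}$ and $x \in S^{-1}(y) + w$. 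Thus the two conclusions are literally the same existence statement, the key identity being $-(-T(-\cdot)+\delta)(a) = (T(-\cdot)+\delta)(a)$. Since the hypotheses and the quantifiers (over $x \in V$, $a \in r\mathbb{B}$, and $y \in [S(x)+a]\cap W$) are identical, the pointwise implications coincide, so a triple $(V,W,r)$ witnesses (MR) if and only if it witnesses (OC).

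Next I would match (MR) with (IT). Setting $y'' := y - a$, so that $x \in S^{-1}(y'')$ and $a = y - y''$, the (MR) conclusion becomes $x \in S^{-1}(y) + T(y''-y) + \delta|y''-y|\mathbb{B}$; renaming $(y'',y) \mapsto (y,y')$ this is exactly the defining inclusion $S^{-1}(y)\cap W' \subset S^{-1}(y') + T(y-y') + \delta|y-y'|\mathbb{B}$ of pseudo strict $T$-differentiability of $S^{-1}$ at $\bar{y}$ for $\bar{x}$ (Definition \ref{def:pseudo-T-diff}), where $W'$ is the relevant neighborhood of $\bar{x}$ and $y,y'$ range near $\bar{y}$. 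So the content agrees, and what remains is to reconcile the neighborhoods. For (MR) $\Rightarrow$ (IT), given $(V,W,r)$ from (MR) I would take $W' := V$ and let the $\bar{y}$-neighborhood in (IT) be $W \cap \mathbb{B}(\bar{y}, r/2)$, so that any $y,y'$ in it satisfy $y,y' \in W$ and $|y-y'| \le r$, which is precisely what (MR) needs with $a = y'-y$. For (IT) $\Rightarrow$ (MR), given the (IT) neighborhoods $W' \ni \bar{x}$ and $V_{IT} \ni \bar{y}$ I would set $V := W'$, choose $W$ a ball about $\bar{y}$ and $r > 0$ small enough that $y \in W$ and $|a| \le r$ force both $y$ and $y-a$ into $V_{IT}$, and then apply (IT) to the pair $(y-a,\,y)$ to recover the (MR) inclusion.

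The arguments are essentially bookkeeping, so I do not expect a genuine obstacle. The one place to be careful is the placement of the two sign reversals: the map is $T(-\cdot)$ in (MR) but $-T(-\cdot)$ in (OC), and the substitution $y'' = y - a$ introduces the further reflection $T(y''-y) = T(-a)$. The second delicate point is matching the one-sided $\bar{y}$-neighborhood that appears in the pseudo strict differentiability of $S^{-1}$ with the ``$y$ near $\bar{y}$, $a$ small'' structure built into (MR) and (OC). Keeping the symmetry $\delta|a|\mathbb{B} = -\delta|a|\mathbb{B}$ in view throughout is what makes all the error terms line up.
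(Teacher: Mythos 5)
Your proposal is correct and follows essentially the same route as the paper: both reduce the definitions to singleton sets $A=\{a\}$, rewrite (MR), (OC), and (IT) as pointwise implications, identify the (MR) and (OC) conclusions via the sign identity $-(-T(-\cdot)+\delta)(a)=(T(-\cdot)+\delta)(a)$, and match (MR) with (IT) through the substitution $a=y'-y$ together with the same halving/shrinking of the $\bar{y}$-neighborhood to reconcile the quantifiers. The only differences are cosmetic (you prove (MR)$\Leftrightarrow$(OC) before (MR)$\Leftrightarrow$(IT), and the paper uses radius $r'/3$ where you use ``small enough'').
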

\begin{proof}
 We first note that $x\in S^{-1}(y^{\prime})\cap V$ can be rewritten
as $x\in V$, $y^{\prime}\in S(x)$, so (IT) is equivalent to: For
any $\delta>0$, we can find neighborhoods $V$ of $\bar{x}$ and
$W$ of $\bar{y}$ such that \begin{equation}
\underbrace{x\in V}_{(1)},\underbrace{y^{\prime}\in S(x)}_{(2)},\underbrace{y\in W}_{(3)},\underbrace{y^{\prime}\in W}_{(4)}\mbox{ implies }\underbrace{x\in S^{-1}(y)+(T+\delta)(y^{\prime}-y)}_{(5)}.\label{eq:T-equiv-formula}\end{equation}
Next, take the set $A$ to be $A=\{y-y^{\prime}\}$. The condition
$y\in[S(x)+A]\cap W$ is equivalent to (2) and (3) combined. The definition
of $T(-\cdot)$-metric regularity can be written as: For any $\delta>0$,
we can find a neighborhood $V$ of $\bar{x}$ and $r>0$ such that
\begin{eqnarray}
 &  & x\in V\mbox{, }y^{\prime}\in S(x)\mbox{, }y\in\mathbb{B}(\bar{y},r)\mbox{ and }A\subset\mathbb{B}(\mathbf{0},r)\label{eq:T-MR-equiv-formula}\\
 & \mbox{ implies} & \underbrace{x\in S^{-1}(y)+(T+\delta)(-A)}_{(5)}.\nonumber \end{eqnarray}

We first show that (IT) implies (MR). Suppose \eqref{eq:T-equiv-formula}
holds for $W=\mathbb{B}(\bar{y},r^{\prime})$. If $r=\frac{r^{\prime}}{3}$,
then $y\in\mathbb{B}(\bar{y},r)$ and $A\subset\mathbb{B}(\mathbf{0},r)$
implies that $y^{\prime}\in\mathbb{B}(\bar{y},2r)=\mathbb{B}(\bar{y},\frac{2}{3}r^{\prime})\subset W$.
Therefore the formula in \eqref{eq:T-MR-equiv-formula} holds for
$r=\frac{r^{\prime}}{3}$.

Next, we show that (MR) implies (IT). Suppose \eqref{eq:T-MR-equiv-formula}
holds. If $y^{\prime}\in\mathbb{B}(\bar{y},\frac{r}{2})$ and $y\in\mathbb{B}(\bar{y},\frac{r}{2})$,
then $y-y^{\prime}\in\mathbb{B}(\mathbf{0},r)$, or $A\subset\mathbb{B}(\mathbf{0},r)$.
This means that \eqref{eq:T-equiv-formula} holds for $W=\mathbb{B}(\bar{y},\frac{r}{2})$.

For the equivalence of (OC) and (MR), note that (OC) is equivalent
to modifying the formula (5) in \eqref{eq:T-MR-equiv-formula} to
the equivalent condition $y\in S(x-(T+\delta)(y^{\prime}-y))$. 
\end{proof}

We now look at the connection between pseudo $T$-outer differentiability
and metric subregularity. For more on metric subregularity, we refer
the reader to \cite{DR04,DR09}. We define $T$-metric subregularity
as follows.
\begin{defn}
($T$-metric subregularity) \label{def:subregularity}Let $X$ and
$Y$ be Banach spaces, $S:X\rightrightarrows Y$ be a set-valued map,
and $T:Y\rightrightarrows X$ be positively homogeneous. $S$ is \emph{$T$-metrically
subregular} at $(\bar{x},\bar{y})\in\gph(S)$ if for any $\delta>0$,
there exists a neighborhood $V$ of $\bar{x}$ and $r>0$ such that,
for any $A\subset r\mathbb{B}$, (or equivalently, for any set $A\subset r\mathbb{B}$
containing exactly one element.)  \begin{equation}
x\in V,\bar{y}\in S(x)+A\mbox{ implies }x\in S^{-1}(\bar{y})+(T+\delta)(A).\label{eq:T-submetric-defn}\end{equation}

\end{defn}
Recall that metric subregularity of $S$ with modulus $\kappa$ at
$(\bar{x},\bar{y})$ can be written compactly as: For any $\kappa^{\prime}>\kappa$,
there exists a neighborhood $V$ of $\bar{x}$ such that \[
x\in V\mbox{ implies }d\big(x,S^{-1}(\bar{y})\big)\leq\kappa^{\prime}d\big(\bar{y},S(x)\big).\]
For the special case of $T(w):=\kappa|w|\mathbb{B}$, $T$-metric
subregularity is equivalent to metric subregularity with modulus $\kappa$.
Once again, an easy way to see this is to notice that both are equivalent
to condition (IT) below. This argument again makes use of the fact
that metric subregularity of $S$ at $(\bar{x},\bar{y})$ is equivalent
to the calmness of $S^{-1}$ at $\bar{y}$ for $\bar{x}$. 

Here is a result on the equivalences between $T$-metric subregularity
and pseudo outer $T$-differentiability.
\begin{thm}
(Generalized metric subregularity) Let $X$ and $Y$ be Banach spaces,
$S:X\rightrightarrows Y$ be a set-valued map, and $T:Y\rightrightarrows X$
be a positively homogeneous set-valued map. The following are equivalent:
\begin{enumerate}
\item [(MR${}^\prime$)]$S$ is $T(-\cdot)$-metric subregular at $(\bar{x},\bar{y})\in\gph(S)$.
\item [(IT${}^\prime$)]$S^{-1}$ is pseudo outer $T$-differentiable at
$\bar{y}$ for $\bar{x}$.
\end{enumerate}
\end{thm}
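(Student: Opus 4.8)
The plan is to show that (MR$'$) and (IT$'$) are literal reformulations of each other, following the shorter half of the argument for Theorem \ref{thm:T-equivalences} but dropping the auxiliary point $x'$ that the strict/regularity setting requires. First I would put both conditions into a common form. Unwinding pseudo outer $T$-differentiability of $S^{-1}$ at $\bar{y}$ for $\bar{x}$ (Definition \ref{def:pseudo-T-diff}(1) applied to $S^{-1}$, with domain base point $\bar{y}$ and range base point $\bar{x}$), and using that $x\in S^{-1}(y)$ iff $y\in S(x)$, gives that (IT$'$) means: for every $\delta>0$ there are neighborhoods $\tilde{V}$ of $\bar{y}$ and $W$ of $\bar{x}$ such that
\[
x\in W,\ y\in S(x),\ y\in\tilde{V}\ \Longrightarrow\ x\in S^{-1}(\bar{y})+T(y-\bar{y})+\delta|y-\bar{y}|\mathbb{B}.
\]

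For (MR$'$) I would invoke the parenthetical single-element reformulation already granted in Definition \ref{def:subregularity}, so it suffices to treat $A=\{a\}$. Then $\bar{y}\in S(x)+a$ reads $\bar{y}-a\in S(x)$, and the conclusion $x\in S^{-1}(\bar{y})+(T(-\cdot)+\delta)(a)$ unpacks, via the convention $(T(-\cdot)+\delta)(a)=T(-a)+\delta|a|\mathbb{B}$, to $x\in S^{-1}(\bar{y})+T(-a)+\delta|a|\mathbb{B}$. The decisive step is the substitution $y:=\bar{y}-a$, equivalently $a=\bar{y}-y$: under it, $\bar{y}-a\in S(x)$ becomes $y\in S(x)$, the size restriction $a\in r\mathbb{B}$ becomes $y\in\mathbb{B}(\bar{y},r)$, and since $-a=y-\bar{y}$ and $|a|=|y-\bar{y}|$, the conclusion of (MR$'$) becomes exactly the displayed conclusion of (IT$'$). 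Thus the two statements coincide once the neighborhood $V$ of $\bar{x}$ in (MR$'$) is matched with $W$, and the ball $\mathbb{B}(\bar{y},r)$ is matched with the neighborhood $\tilde{V}$ of $\bar{y}$.

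Carrying out the two implications is then bookkeeping. For (MR$'$) $\Rightarrow$ (IT$'$): given $\delta$, take the $V,r$ furnished by (MR$'$), set $W:=V$ and $\tilde{V}:=\mathbb{B}(\bar{y},r)$, and read off (IT$'$) through the substitution. For (IT$'$) $\Rightarrow$ (MR$'$): given $\delta$, take $\tilde{V},W$ from (IT$'$), choose $r>0$ with $\mathbb{B}(\bar{y},r)\subset\tilde{V}$, set $V:=W$, and recover the single-element form of (MR$'$). A point worth noting is that, unlike the proof of Theorem \ref{thm:T-equivalences}, where two points $y,y'$ must lie simultaneously in $W$ and so force the radius adjustments such as $r=\tfrac{r'}{3}$, here the target $\bar{y}$ is fixed and only the single point $y$ varies, so no shrinking of radii is needed.

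I do not anticipate a genuine obstacle, since the content is purely definitional. The one place requiring care is the sign in $T(-\cdot)$: it is exactly this $-\cdot$ that converts the perturbation variable $a$ of the subregularity estimate into the increment $\bar{y}-y$ of the differentiability estimate, so that $T(-a)=T(y-\bar{y})$ produces $T$ rather than $-T(-\cdot)$. I would therefore write the substitution out explicitly and verify $T(-a)=T(y-\bar{y})$ and $|a|=|y-\bar{y}|$ before concluding in each direction.
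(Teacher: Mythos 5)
Your proposal is correct and takes essentially the same route as the paper's own proof: both reduce (MR$^{\prime}$) to its singleton form and identify it with the unwound definition of pseudo outer $T$-differentiability of $S^{-1}$ through the substitution $a=\bar{y}-y$ (the paper writes $A=\{\bar{y}-y^{\prime}\}$), with the $T(-\cdot)$ sign absorbing the flip so that $T$ itself appears in the conclusion. Your write-up merely makes explicit the neighborhood/radius bookkeeping that the paper's two-line proof leaves implicit.
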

\begin{proof}
Condition (IT$^{\prime}$) can be written as: For any $\delta>0$,
there exists a neighborhood $V$ of $\bar{x}$ and $r>0$ such that
\begin{equation}
x\in V,x\in S^{-1}(y^{\prime}),y^{\prime}-\bar{y}\in r\mathbb{B}\mbox{ implies }x\in S^{-1}(\bar{y})+(T+\delta)(y^{\prime}-\bar{y}).\label{eq:T-sub-equiv-formula}\end{equation}
It is also clear that (MR$^{\prime}$) can be written in this form
with $A=\{\bar{y}-y^{\prime}\}$.
\end{proof}
Here is a lemma on pseudo strict $T$-differentiability amended from
\cite[Lemma 9.39]{RW98}, which is in turn attributed to \cite{Hen97}.
\begin{lem}
\label{lem:Extended-s_p_t_d}(Extended formulation of pseudo $T$-differentiability)
Consider a mapping $S:X\rightrightarrows Y$, a pair $(\bar{x},\bar{y})\in\gph(S)$,
a set $D\subset X$ containing $\bar{x}$, and a positively homogeneous
map $T:X\rightrightarrows Y$. Suppose that there is a $\delta>0$
such that \begin{equation}
T(w)\supset\delta|w|\mathbb{B}\mbox{ for all }w\in X.\label{eq:inner-bounded}\end{equation}

Then the following two conditions are equivalent:
\begin{enumerate}
\item [(a$^\prime$)]there exist neighborhoods $V$ of $\bar{x}$ and $W$
of $\bar{y}$ such that \[
S(x^{\prime})\cap W\subset S(\bar{x})+T(x^{\prime}-\bar{x})\mbox{ for all }x^{\prime}\in D\cap V.\]

\item [(b$^\prime$)]there exists a neighborhood $W$ of $\bar{y}$ such
that \[
S(x^{\prime})\cap W\subset S(\bar{x})+T(x^{\prime}-\bar{x})\mbox{ for all }x^{\prime}\in D.\]

\end{enumerate}
If in addition $|T|^{+}$ is finite, then the following two conditions
are equivalent as well:
\begin{enumerate}
\item [(a)]there exist neighborhoods $V$ of $\bar{x}$ and $W$ of $\bar{y}$
such that \[
S(x^{\prime})\cap W\subset S(x)+T(x^{\prime}-x)\mbox{ for all }x,x^{\prime}\in D\cap V.\]

\item [(b)]there exist neighborhoods $V$ of $\bar{x}$ and $W$ of $\bar{y}$
such that \[
S(x^{\prime})\cap W\subset S(x)+T(x^{\prime}-x)\mbox{ for all }x\in D\cap V,x^{\prime}\in D.\]

\end{enumerate}
\end{lem}
\begin{proof}
We first look at the equivalences of (a$^{\prime}$) and (b$^{\prime}$).
Trivially (b$^{\prime}$) implies (a$^{\prime}$), so assume that
(a$^{\prime}$) holds for neighborhoods $V=\mathbb{B}(\bar{x},\gamma)$
and $W=\mathbb{B}(\bar{y},\epsilon)$. We can reduce $\epsilon$ so
that $\epsilon\leq\delta\gamma$. Then $x^{\prime}\in D\backslash\mathbb{B}(\bar{x},\gamma)$
implies $W\subset S(\bar{x})+T(x^{\prime}-\bar{x})$, which gives
us what we seek.

We now look at the equivalences of (a) and (b). Let $\kappa:=|T|^{+}$.
Trivially (b) implies (a), so assume that (a) holds for neighborhoods
$V=\mathbb{B}(\bar{x},\gamma)$ and $W=\mathbb{B}(\bar{y},\epsilon)$.
We will verify that (b) holds for $V^{\prime}=\mathbb{B}(\bar{x},\gamma^{\prime})$
and $W^{\prime}=\mathbb{B}(\bar{y},\epsilon^{\prime})$ for some $0<\gamma^{\prime}<\gamma$,
$0<\epsilon^{\prime}<\epsilon$.

Fix any $x\in D\cap\mathbb{B}(\bar{x},\gamma^{\prime})$. Our assumption
gives us \[
S(x^{\prime})\cap\mathbb{B}(\bar{y},\epsilon^{\prime})\subset S(x)+T(x^{\prime}-x)\mbox{ when }x^{\prime}\in D\cap\mathbb{B}(\bar{x},\gamma),\]
and our goal is to demonstrate that this holds also when $x^{\prime}\in D\backslash\mathbb{B}(\bar{x},\gamma)$.
Note that $|x^{\prime}-x|>(\gamma-\gamma^{\prime})$. From applying
(a) to $x^{\prime}=\bar{x}$, we see that $\bar{y}\in S(x)+T(\bar{x}-x)$
and consequently $\mathbb{B}(\bar{y},\epsilon^{\prime})\subset S(x)+(\kappa\gamma^{\prime}+\epsilon^{\prime})\mathbb{B}$.
If $\kappa\gamma^{\prime}+\epsilon^{\prime}\leq\delta(\gamma-\gamma^{\prime})$,
then\begin{eqnarray*}
S(x^{\prime})\cap\mathbb{B}(\bar{y},\epsilon^{\prime}) & \subset & S(x)+(\kappa\gamma^{\prime}+\epsilon^{\prime})\mathbb{B}\\
 & \subset & S(x)+\delta(\gamma-\gamma^{\prime})\mathbb{B}\\
 & \subset & S(x)+\delta|x^{\prime}-x|\mathbb{B}\\
 & \subset & S(x)+T(x^{\prime}-x).\end{eqnarray*}
The condition $\kappa\gamma^{\prime}+\epsilon^{\prime}\leq\delta(\gamma-\gamma^{\prime})$
is easily achieved by making $\gamma^{\prime}$ and $\epsilon^{\prime}$
small enough, giving us the required conclusion.
\end{proof}
If $T^{-1}(\mathbf{0})=Y$, we have the following equivalent definitions
for $T$-metric regularity, $T$-open covering and $T$-metric subregularity
that do not require $A\subset r\mathbb{B}$.
\begin{prop}
\label{pro:Alternate-T-defn}(Alternate definition of metric (sub)regularity
and openness) Let $S:X\rightrightarrows Y$ and $T:Y\rightrightarrows X$
be set-valued maps, with $T$ positively homogeneous and $T^{-1}(\mathbf{0})=Y$
(or equivalently, $\mathbf{0}\in T(y)$ for all $y\in Y$). The following
is equivalent to the $T$-metric subregularity of $S$ at $(\bar{x},\bar{y})\in\gph(S)$:
\begin{enumerate}
\item [(MR$^\prime_1$)]For any $\delta>0$, there exists a neighborhood
$V$ of $\bar{x}$ such that, for any $x\in V$ and $A\subset Y$,
(or equivalently, for any set $A\subset Y$ containing exactly one
element) \[
\bar{y}\in S(x)+A\mbox{ implies }x\in S^{-1}(\bar{y})+(T+\delta)(A).\]

\end{enumerate}
Assume further that $|T|^{+}$ is finite. Then the following is equivalent
to the $T$-metric regularity of $S$ at $(\bar{x},\bar{y})\in\gph(S)$:
\begin{enumerate}
\item [(MR$_1$)]For any $\delta>0$, there exist neighborhoods $V$ of
$\bar{x}$ and $W$ of $\bar{y}$ such that, for any $x\in V$ and
$A\subset Y$, (or equivalently, for any set $A\subset Y$ containing
exactly one element), \[
y\in[S(x)+A]\cap W\mbox{ implies }x\in S^{-1}(y)+(T+\delta)(A).\]

\end{enumerate}
The following is equivalent to the $T$-open covering of $S$ at $(\bar{x},\bar{y})\in\gph(S)$:
\begin{enumerate}
\item [(OC$_1$)]For any $\delta>0$, there exist neighborhoods $V$ of
$\bar{x}$ and $W$ of $\bar{y}$ such that, for any $x\in V$ and
any set $A\subset Y$ (or equivalently, for any set $A\subset Y$
containing exactly one element),\[
[S(x)+A]\cap W\subset S\big(x+(T+\delta)(A)\big).\]

\end{enumerate}
\end{prop}
\begin{proof}
For the equivalence of $T$-metric subregularity and (MR$_{1}^{\prime}$),
the proof is similar, so we prove only the remaining equivalences.
The condition $T^{-1}(\mathbf{0})=Y$ ensures that for any $\delta>0$,
$(T+\delta)(y)\supset\delta|y|\mathbb{B}$ for all $y\in Y$. Furthermore,
$|T+\delta|^{+}$ is also finite for all finite $\delta>0$. So Lemma
\ref{lem:Extended-s_p_t_d} implies that pseudo strict  $T$-differentiability
of $S^{-1}$ at $\bar{y}$ for $\bar{x}$ is equivalent to: For any
$\delta>0$, there exists neighborhoods $V$ of $\bar{x}$ and $W$
of $\bar{y}$ such that \begin{equation}
\underbrace{x\in V}_{(1)},\underbrace{y^{\prime}\in S(x)}_{(2)},\underbrace{y\in W}_{(3)}\mbox{ implies }\underbrace{x\in S^{-1}(y)+(T+\delta)(y^{\prime}-y)}_{(5)}.\label{eq:new-T-equiv}\end{equation}
The difference here from \eqref{eq:T-equiv-formula} is that the condition
$y^{\prime}\in W$ is superfluous. We also note that $T(-\cdot)$-metric
regularity is equivalent to rewriting (2) and (3) as $y\in[S(x)+(y-y^{\prime})]\cap W$
in the above, while $-T(-\cdot)$-open covering is rewriting (2) and
(3) as $y\in[S(x)+(y-y^{\prime})]\cap W$ and (5) as $y\in S(x-(T+\delta)(y^{\prime}-y))$.
Combined with Theorem \ref{thm:T-equivalences}, this proves the alternative
definitions of $T$-metric regularity and $T$-open covering in (MR$_{1}$)
and (OC$_{1}$). 
\end{proof}
To close this section, we shall illustrate in Proposition \ref{pro:Reduce-(C,T)}
how $T$-metric regularity can be a more precise tool than metric
regularity. Recall metric regularity with modulus $\kappa$ is often
written compactly as: For all $\kappa^{\prime}>\kappa$, there exists
neighborhoods $V$ of $\bar{x}$ and $W$ of $\bar{y}$ such that
\begin{equation}
x\in V,y\in W\mbox{ implies }d\big(x,S^{-1}(y)\big)\leq\kappa^{\prime}d\big(y,S(x)\big).\label{eq:MR-form}\end{equation}
Another way of writing \eqref{eq:MR-form} is \[
x\in V,y\in W,t>0\mbox{ and }d\big(y,S(x)\big)\leq t\mbox{ implies }d\big(x,S^{-1}(y)\big)\leq\kappa^{\prime}t,\]
or equivalently,\begin{equation}
x\in V,y\in W,t>0\mbox{ and }y\in S(x)+t\mathbb{B}\mbox{ implies }x\in S^{-1}(y)+\kappa^{\prime}t\mathbb{B}.\label{eq:MR-form2}\end{equation}
If \eqref{eq:MR-form2} holds for $V=\mathbb{B}(\bar{x},r_{1})$ and
$W=\mathbb{B}(\bar{y},r_{2})$, then for any $x\in V$ and $y\in W$,
\begin{eqnarray*}
 &  & y\in\bar{y}+r_{2}\mathbb{B}\\
 & \implies & y\in S(\bar{x})+r_{2}\mathbb{B}\\
 & \implies & \bar{x}\in S^{-1}(y)+\kappa^{\prime}r_{2}\mathbb{B}\\
 & \implies & x\in S^{-1}(y)+(\kappa^{\prime}r_{2}+r_{1})\mathbb{B}.\end{eqnarray*}
This means that $x\in V$, $y\in W$ and $t\geq r_{2}+\frac{r_{1}}{\kappa^{\prime}}$
implies $x\in S^{-1}(y)+\kappa^{\prime}t\mathbb{B}$, so amending
the condition $t>0$ in \eqref{eq:MR-form2} to $0<t\leq r_{2}+\frac{r_{1}}{\kappa^{\prime}}$
does not change the statement there. Motivated by the above, we define
$(C,T)$-metric regularity below, and show that this concept is equivalent
to $T^{\prime}$-metric regularity for some appropriately defined
$T^{\prime}:Y\rightrightarrows X$. 
\begin{defn}
($(C,T)$-regularity) Let $X$ and $Y$ be Banach spaces, $S:X\rightrightarrows Y$
be a set-valued map,  $T:Y\rightrightarrows X$ be positively homogeneous,
and $C\subset Y$ be closed. We say that $S$ is \emph{$(C,T)$-metrically
regular} at $(\bar{x},\bar{y})\in\gph(S)$ if for any $\delta>0$,
there exist neighborhoods $V$ of $\bar{x}$ and $W$ of $\bar{y}$
and $r>0$ such that \[
x\in V,y\in[S(x)+tC]\cap W,t>0\mbox{ and }tC\subset r\mathbb{B}\mbox{ implies }x\in S^{-1}(y)+(T+\delta)(tC).\]

\end{defn}
It follows from our earlier discussion that $(\mathbb{B},T)$-metric
regularity, where $T(w):=\kappa|w|\mathbb{B}$, is equivalent to metric
regularity with modulus $\kappa$. If $C$ is chosen to be a set different
from the unit ball $\mathbb{B}$, then $(C,T)$-metric regularity
can identify sensitive and less sensitive directions. For example
one can choose $C$ as the unit ball under a different norm. We now
prove the equivalence of $(C,T)$-metric regularity and $T^{\prime}$-metric
regularity for some $T^{\prime}:Y\rightrightarrows X$. 
\begin{prop}
\label{pro:Reduce-(C,T)} (Reduction of $(C,T)$-metric regularity
to $T$-metric regularity) Let $X$ and $Y$ be Banach spaces, $S:X\rightrightarrows Y$
be a set-valued map, and $T:Y\rightrightarrows X$ be positively homogeneous,
$C$ is convex, and $\epsilon\mathbb{B}\subset C\subset R\mathbb{B}$
for some $\epsilon,R>0$. Then $S$ is $T^{\prime}$-metrically regular
at $(\bar{x},\bar{y})$ if and only if $S$ is $(C,T)$-metrically
regular there, where $T^{\prime}:Y\rightrightarrows X$ is defined
by $T^{\prime}(w)=T(tC)$ for $t=\min\{\lambda\mid w\in\lambda C\}$. \end{prop}
\begin{proof}
We recall from the proof of Theorem \ref{thm:T-equivalences} that
$T^{\prime}$-metric regularity is equivalent to: For all $\delta>0$,
there is a neighborhood $V$ of $\bar{x}$ and $r>0$ such that \begin{eqnarray}
 &  & x\in V,y^{\prime}\in S(x),y\in\mathbb{B}(\bar{y},r),y-y^{\prime}\in r\mathbb{B}\label{eq:TPMR}\\
 & \mbox{ implies } & x\in S^{-1}(y)+(T^{\prime}+\delta)(y-y^{\prime}).\nonumber \end{eqnarray}
Next, we note that $(C,T)$-metric regularity is equivalent to: For
any $\delta>0$, there is a neighborhood $V$ of $\bar{x}$ and $r>0$
such that\[
x\in V,y\in[S(x)+tC]\cap\mathbb{B}(\bar{y},r),tC\subset r\mathbb{B}\mbox{ implies }x\in S^{-1}(y)+(T+\delta)(tC),\]
which is in turn equivalent to: For any $\delta>0$, there is a neighborhood
$V$ of $\bar{x}$ and $r>0$ such that\begin{eqnarray}
 &  & x\in V,y^{\prime}\in S(x),y\in\mathbb{B}(\bar{y},r),y-y^{\prime}\in tC,tC\subset r\mathbb{B}\label{eq:CTMR}\\
 & \mbox{implies } & x\in S^{-1}(y)+(T+\delta)(tC).\nonumber \end{eqnarray}
Suppose $S$ is $T^{\prime}$-metrically regular at $(\bar{x},\bar{y})$.
Let $\delta,r>0$ and $V$ be such that \eqref{eq:TPMR} holds, and
suppose $x\in V$, $y^{\prime}\in S(x)$ and $y\in\mathbb{B}(\bar{y},r)$.
Let $t>0$ be such that $y-y^{\prime}\in tC$. Then we have $x\in S^{-1}(y)+(T^{\prime}+\delta)(y-y^{\prime})$.
Let $t^{\prime}=\min\{\lambda\mid y-y^{\prime}\in\lambda C\}$. Then
\begin{eqnarray*}
T^{\prime}(y-y^{\prime}) & = & T(t^{\prime}C),\\
\mbox{ so }(T^{\prime}+\delta)(y-y^{\prime}) & \subset & (T+\delta)(t^{\prime}C),\end{eqnarray*}
and $t^{\prime}\leq t$. Since $C$ is convex and $0\in\mbox{int}(C)$,
$t^{\prime}C\subset tC$, so $(T+\delta)(t^{\prime}C)\subset(T+\delta)(tC)$,
which implies $x\in S^{-1}(y)+(T+\delta)(tC)$. This in turn means
that \eqref{eq:CTMR} holds, so $S$ is $(C,T)$-metrically regular
at $(\bar{x},\bar{y})$.

For the converse, suppose that $S$ is $(C,T)$-metrically regular
at $(\bar{x},\bar{y})$. That is, for any $\delta>0$, there is a
neighborhood $V$ of $\bar{x}$ and $r>0$ such that \eqref{eq:CTMR}
holds. Recall $\epsilon\mathbb{B}\subset C\subset R\mathbb{B}$. If
$y-y^{\prime}\in\frac{r\epsilon}{R}\mathbb{B}$, then the minimum
$t>0$ such that $y-y^{\prime}\in tC$ gives $tC\subset r\mathbb{B}$,
so the condition $tC\subset r\mathbb{B}$ is superfluous in \eqref{eq:CTMR}.
The minimality of $t$ ensures $T(tC)=T^{\prime}(y-y^{\prime})$.
The minimality of $t$ and $\epsilon\mathbb{B}\subset C$ also ensures
$\epsilon\leq\frac{1}{t}|y-y^{\prime}|$, so we have $t\epsilon\leq|y-y^{\prime}|$.
The fact that $\frac{1}{t}(y-y^{\prime})\in C\subset R\mathbb{B}$
gives $\frac{1}{t}|y-y^{\prime}|\leq R$. Then \begin{eqnarray*}
(T+\delta)(tC) & = & T(tC)+\delta(tC)\\
 & \subset & T^{\prime}(y-y^{\prime})+\delta tR\mathbb{B}\\
 & = & T^{\prime}(y-y^{\prime})+\delta\frac{R}{\epsilon}t\epsilon\mathbb{B}\\
 & \subset & T^{\prime}(y-y^{\prime})+\delta\frac{R}{\epsilon}|y-y^{\prime}|\mathbb{B}\\
 & = & \Big(T^{\prime}+\delta\frac{R}{\epsilon}\Big)(y-y^{\prime}).\end{eqnarray*}
To conclude, we have \begin{eqnarray*}
 &  & x\in V,y^{\prime}\in S(x),y\in\mathbb{B}(\bar{y},r),y-y^{\prime}\in\frac{r\epsilon}{R}\mathbb{B}\\
 & \mbox{implies } & x\in S^{-1}(y)+\Big(T^{\prime}+\delta\frac{R}{\epsilon}\Big)(y-y^{\prime}).\end{eqnarray*}
The form in the last expression is similar to that of \eqref{eq:TPMR}.
We easily deduce that $S$ is $T^{\prime}$-metrically regular at
$(\bar{x},\bar{y})$ as needed.
\end{proof}
As a corollary, we have the equivalence of $T$-metric regularity
with $T(w):=\kappa|w|\mathbb{B}$ and metric regularity with modulus
$\kappa$.

\section{\label{sec:Strict--T-from-T}Strict $T$-differentiability from outer
 $T$-differentiability}

Suppose $S:X\rightrightarrows Y$ is such that $\bar{y}\in S(\bar{x})$,
$T:X\rightrightarrows Y$ is positively homogeneous, and $S:X\rightrightarrows Y$
is pseudo strictly $T$-differentiable at $\bar{x}$ for $\bar{y}$.
Then for any $\delta>0$, there are neighborhoods $U$ of $\bar{x}$
and $V$ of $\bar{y}$ such that $S$ is pseudo outer $(T+\delta)$-differentiable
at $x$ for $y$ for all $x\in U$ and $y\in V\cap S(x)$. The main
result in this section, Theorem \ref{thm:strict-T-from-T}, is to
show that the converse holds with additional assumptions.

We now study the relationship between pseudo outer $T$-differentiability
and pseudo strict $T$-differentiability. The relation between pseudo
(outer/ strict) $T$-differentiability and (outer/ strict) $T$-differentiability
can be obtained from Theorem \ref{thm:T-diff-from-pseudo}. First,
here is a lemma on pseudo outer $T$-dif\-fer\-en\-tia\-ble functions
on a convex set that is comparable to the second part of \cite[Theorem 9.2]{RW98}.
This is extended from results in \cite{Li94,Rob07}.
\begin{lem}
\label{lem:local_Aubin_1}(Pseudo outer $T$-differentiability) Let
$D\subset X$ be a convex set, and $S:D\rightrightarrows\mathbb{R}^{n}$
be a closed-valued, osc set-valued map satisfying $\bar{y}\in S(\bar{x})$.
Assume that $T:X\rightrightarrows\mathbb{R}^{n}$ is a positively
homogeneous closed convex valued set-valued map and $|T|^{+}\leq\kappa$
for some $\kappa>0$. Suppose that $r>0$ is such that 
\begin{enumerate}
\item $\liminf_{x^{\prime}\xrightarrow[D]{}x}S(x^{\prime})\supset S(x)\cap\mathbb{B}(\bar{y},r)$
with respect to $D$ for all $x\in D$ (This is true when $S$ is
inner semicontinuous.), and
\item $S$ is pseudo outer $T$-differentiable for all $x\in D$ and $y\in S(x)\cap\mathbb{B}(\bar{y},r)$, 
\end{enumerate}
Then for any $\delta>0$, $x_{0},x_{1}\in D${\Large{} }and $r^{\prime}$
satisfying $(\kappa+\delta)|x_{0}-x_{1}|+r^{\prime}<r$, we have $S(x_{1})\cap\mathbb{B}(\bar{y},r^{\prime})\subset S(x_{0})+(T+\delta)(x_{1}-x_{0})$.\end{lem}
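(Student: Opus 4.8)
The plan is to run a continuation (finite-chaining) argument along the segment $[x_0,x_1]$, turning the pointwise pseudo outer $T$-differentiability of hypothesis (2) into the claimed global estimate. Fix an arbitrary $y_1\in S(x_1)\cap\mathbb{B}(\bar y,r')$; since $y_1$ is arbitrary it suffices to produce $y_0\in S(x_0)$ with $y_1-y_0\in T(x_1-x_0)+\delta|x_1-x_0|\mathbb{B}$. Write $e:=x_1-x_0$, $L:=|e|$, and $x_t:=(1-t)x_0+tx_1$ for $t\in[0,1]$; convexity of $D$ keeps $x_t\in D$. The first observation, which is what the hypothesis $(\kappa+\delta)|x_0-x_1|+r'<r$ is for, is a built-in range control: any $y\in S(x_t)$ satisfying a partial estimate $y_1-y\in T(x_1-x_t)+\delta|x_1-x_t|\mathbb{B}$ automatically obeys $|y_1-y|\le(\kappa+\delta)|x_1-x_t|\le(\kappa+\delta)L$ (using $|T|^+\le\kappa$), hence $|y-\bar y|\le r'+(\kappa+\delta)L<r$. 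Thus every witness produced along the way lies in $\mathbb{B}(\bar y,r)$, where both hypotheses (1) and (2) are available.

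The core is a single continuation step that I will iterate over a fine partition $0=t_0<t_1<\dots<t_N=1$. Given a witness $y_{(k)}\in S(x_{t_k})\cap\mathbb{B}(\bar y,r)$, I first invoke hypothesis (1) (inner semicontinuity) at $x_{t_k}$ to find a point $\tilde y\in S(x_{t_{k-1}})$ close to $y_{(k)}$, and then apply the pseudo outer $T$-differentiability of $S$ at $x_{t_{k-1}}$ for $\tilde y$ (hypothesis (2), with a small tolerance $\delta_0\le\delta$) to conclude, for $x_{t_k}$ in the resulting neighborhood and $y_{(k)}$ in the resulting $\tilde y$-neighborhood, that $y_{(k)}\in S(x_{t_{k-1}})+T(x_{t_k}-x_{t_{k-1}})+\delta_0|x_{t_k}-x_{t_{k-1}}|\mathbb{B}$. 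This yields $y_{(k-1)}\in S(x_{t_{k-1}})$ with $y_{(k)}-y_{(k-1)}\in T(x_{t_k}-x_{t_{k-1}})+\delta_0|x_{t_k}-x_{t_{k-1}}|\mathbb{B}$. Starting from $y_{(N)}=y_1$ and descending to $y_{(0)}\in S(x_0)$, I telescope $y_1-y_{(0)}=\sum_{k=1}^N\big(y_{(k)}-y_{(k-1)}\big)$. Here positive homogeneity does the decisive simplifying work: each increment lies in $T\big((t_k-t_{k-1})e\big)=(t_k-t_{k-1})T(e)$, and since the weights $t_k-t_{k-1}$ are positive and sum to $1$, the sum of the directional pieces is a convex combination of elements of $T(e)$; meanwhile the error terms sum to $\delta_0\sum_k|x_{t_k}-x_{t_{k-1}}|=\delta_0 L\le\delta L$. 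Assembling these back into $T(e)=T(x_1-x_0)$ gives exactly $y_1-y_{(0)}\in T(x_1-x_0)+\delta|x_1-x_0|\mathbb{B}$, as required.

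The main obstacle is the uniform management of the neighborhoods supplied by Definition \ref{def:pseudo-T-diff}: they depend on the base pair $(x_{t_{k-1}},\tilde y)$, so a priori the mesh needed for $x_{t_k}$ to fall in the $x$-neighborhood, and the closeness of $\tilde y$ to $y_{(k)}$ needed for $y_{(k)}$ to fall in the $y$-neighborhood, vary from point to point and could degenerate. I would resolve this by a compactness argument: the segment $\{x_t:t\in[0,1]\}$ is compact and all relevant witnesses lie in the compact ball $\mathbb{B}(\bar y,r)\subset\mathbb{R}^n$, so a finite-subcover / Lebesgue-number extraction produces a single mesh size and a single closeness threshold that work simultaneously at every step; the finite-dimensionality of the range is used precisely here, and also to keep the inner-semicontinuity approximations controllable. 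A secondary but genuine subtlety is the convexity needed to re-assemble the directional contributions into $T(x_1-x_0)$: positive homogeneity alone only reduces every intermediate direction to a nonnegative multiple of $e$, and the telescoped directional term is then a convex combination of points of $T(e)$, so it returns to $T(x_1-x_0)$ exactly when the values of $T$ are convex (in general one only recovers $\conv T(x_1-x_0)$). This is where the structural hypothesis on $T$ is really consumed, and it is the step I would be most careful to justify.
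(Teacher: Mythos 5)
Your overall strategy (chaining along the segment, range control via $(\kappa+\delta)|x_0-x_1|+r^{\prime}<r$, telescoping via positive homogeneity) is in the right spirit, and you correctly isolate the two danger points; but the fix you propose for the first one does not work, and this is a genuine gap. The neighborhoods $V_{(x,y)}\times W_{(x,y)}$ furnished by Definition \ref{def:pseudo-T-diff} do not form an open cover with a transferable property: the inclusion $S(x^{\prime})\cap W_{(x,y)}\subset S(x)+(T+\delta_{0})(x^{\prime}-x)$ is anchored at the center $x$, and a pair $(x^{\prime},y^{\prime})$ merely lying inside $V_{(x,y)}\times W_{(x,y)}$ inherits no analogous inclusion anchored at $x^{\prime}$. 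Hence a Lebesgue-number or finite-subcover extraction only tells you that each chain point lies in \emph{some} member of the cover, whose center is in general not your current partition point. Chaining through those centers destroys the monotone direction of the increments: steps of the form $x^{\prime}-x_{s}$ can be \emph{negative} multiples of $e=x_{1}-x_{0}$, and for a positively homogeneous map $T(-e)$ is completely uncontrolled by $T(e)$ (cf.\ Example \ref{exa:double-neg-failure}), so the telescoped sum no longer reassembles into $T(e)$. Chaining through your own partition points instead requires the mesh to beat the neighborhood sizes at witnesses $(x_{t_{k-1}},\tilde{y})$ that are only constructed \emph{after} the mesh is fixed; compactness of $[x_{0},x_{1}]\times\mathbb{B}(\bar{y},r)$ cannot break this circularity, because the function assigning to each base pair the supremum of admissible neighborhood radii (and likewise the inner-semicontinuity modulus) has no lower semicontinuity, so no uniform positive lower bound follows. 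This uniformity claim is exactly the missing idea, not a technicality.

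The paper's proof is built precisely to avoid needing any such uniformity. It first makes the differentiability neighborhood uniform in $y$ only (covering the compact set $S(x)\cap\mathbb{B}(\bar{y},r)$ by finitely many $U_{(x,y_{i})}$ and invoking outer semicontinuity of $S$), obtaining for each $x$ a single neighborhood $V_{x}$ with $S(x^{\prime})\cap\mathbb{B}(\bar{y},r)\subset S(x)+(T+\delta)(x^{\prime}-x)$ for $x^{\prime}\in D\cap V_{x}$. It then runs a continuous-induction (supremum) argument along the segment: with $\tau:=\sup\{t\in[0,1]\mid S(x_{s})\cap\mathbb{B}(\bar{y},r-(\kappa+\delta)|x_{s}-x_{0}|)^{\circ}\subset S(x_{0})+(T+\delta)(x_{s}-x_{0})\mbox{ for all }s\le t\}$, hypothesis (1) together with closedness of $S(x_{0})+(T+\delta)(x_{\tau}-x_{0})$ passes the inclusion to the limit parameter $\tau$ itself, and then the single neighborhood $V_{x_{\tau}}$ pushes strictly past $\tau$, forcing $\tau=1$; no mesh uniform over the whole segment is ever needed. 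If you want to salvage your write-up, replace the fixed-in-advance partition by this supremum (equivalently, open-and-closed/connectedness) argument. One point you get right and should keep: reassembling collinear increments into $T(x_{1}-x_{0})$ genuinely consumes convex-valuedness of $T$, since positive homogeneity alone gives only $\alpha T(e)+\beta T(e)\supset(\alpha+\beta)T(e)$ and not the needed reverse inclusion; the paper's own proof invokes exactly this (``$T$ is a positively homogeneous convex valued map'') even though the lemma's hypotheses list only closed-valuedness, so your caution there matches a real wrinkle in the paper rather than a defect of your argument.
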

\begin{proof}
By the definition of pseudo $T$-differentiability, for any $\delta>0$,
$x\in D$ and $y\in S(x)\cap\mathbb{B}(\bar{y},r)$, there are neighborhoods
$V_{(x,y)}$ of $x$ and $U_{(x,y)}$ of $y$ such that\[
S(x^{\prime})\cap U_{(x,y)}\subset S(x)+(T+\delta)(x^{\prime}-x)\mbox{ for all }x^{\prime}\in D\cap V_{(x,y)}.\]
Since $S(x)\cap\mathbb{B}(\bar{y},r)$ is compact, we may choose $y_{1},\dots,y_{k}\in\mathbb{B}(\bar{y},r)$
such that \[
S(x)\cap\mathbb{B}(\bar{y},r)\subset\cup_{i=1}^{k}U_{(x,y_{i})}.\]
Let $U_{x}$ be the right hand side of the above formula. Clearly,
$U_{x}\cup[\mathbb{B}(\bar{y},r)]^{c}$ is an open set containing
$S(x)$, where $[\mathbb{B}(\bar{y},r)]^{c}$ is the complement of
$\mathbb{B}(\bar{y},r)$. Since $S$ is outer semicontinuous, this
implies that there is some neighborhood $V_{x}\subset\cap_{i=1}^{k}V_{(x,y_{i})}$
such that $S(x^{\prime})\subset U_{x}\cup[\mathbb{B}(\bar{y},r)]^{c}$
for all $x^{\prime}\in V_{x}$, which implies that \[
S(x^{\prime})\cap\mathbb{B}(\bar{y},r)\subset U_{x}\mbox{ for all }x^{\prime}\in V_{x}.\]
This gives us \begin{eqnarray}
S(x^{\prime})\cap\mathbb{B}(\bar{y},r) & \subset & S(x^{\prime})\cap U_{x}\label{eq:(3)-better}\\
 & \subset & \cup_{i=1}^{k}[S(x^{\prime})\cap U_{(x,y_{i})}]\nonumber \\
 & \subset & S(x)+(T+\delta)(x^{\prime}-x)\mbox{ for all }x^{\prime}\in D\cap V_{x}.\nonumber \end{eqnarray}
Pick $x_{0},x_{1}\in D$. For each $t\in(0,1)$, let $x_{t}=(1-t)x_{0}+tx_{1}$.
Formula \eqref{eq:(3)-better} ensures that for each $t\in[0,1]$,
there is a ball $\mathbb{B}(x_{t},\rho_{t})$ such that for each $x^{\prime}$
in $D\cap\mathbb{B}(x_{t},\rho_{t})$, $S(x^{\prime})\cap\mathring{\mathbb{B}}(\bar{y},r)\subset S(x_{t})+(T+\delta)(x^{\prime}-x_{t})$.
Here, $\mathring{\mathbb{B}}$ denotes an open ball. Define\begin{eqnarray*}
\tau & := & \sup\{t\in[0,1]\mid\mbox{for each }s\in[0,t],\\
 &  & \quad\quad S(x_{s})\cap\mathring{\mathbb{B}}(\bar{y},r-(\kappa+\delta)|x_{s}-x_{0}|)\subset S(x_{0})+(T+\delta)(x_{s}-x_{0})\}.\end{eqnarray*}
We have $\tau>0$ because $\rho_{0}$ is positive. We show first
that \begin{equation}
S(x_{\tau})\cap\mathring{\mathbb{B}}(\bar{y},r-(\kappa+\delta)|x_{\tau}-x_{0}|)\subset S(x_{0})+(T+\delta)(x_{\tau}-x_{0}).\label{eq:robinson-5.1}\end{equation}
By assumption the set $S(x_{0})$ is closed. The set $(T+\delta)(x_{\tau}-x_{0})$
is closed and bounded in $\mathbb{R}^{n}$, and hence compact. So
$S(x_{0})+(T+\delta)(x_{\tau}-x_{0})$ is closed as well; let $Q$
be the complement of the RHS in \eqref{eq:robinson-5.1}. If \eqref{eq:robinson-5.1}
were not true, then $S(x_{\tau})$ would meet the open set $Q\cap\mathring{\mathbb{B}}(\bar{y},r-(\kappa+\delta)|x_{\tau}-x_{0}|)$,
at $\tilde{y}$ say. If $x_{0},x_{1}$ were close enough to $\bar{x}$,
then $\tilde{y}\in S(x_{\tau})\cap\mathbb{B}(\bar{y},r)\subset\liminf_{x^{\prime}\rightarrow x_{\tau}}S(x^{\prime})$.
Therefore, for any $\tilde{r}>0$, there exists some $\epsilon>0$
such that \[
|x^{\prime}-x_{\tau}|<\epsilon\mbox{ implies }S(x^{\prime})\cap\mathbb{B}(\tilde{y},\tilde{r})\neq\emptyset.\]
If $\sigma\in(0,\tau)$ is such that $|x_{\sigma}-x_{\tau}|<\epsilon$,
then $S(x_{\sigma})\cap\mathbb{B}(\tilde{y},\tilde{r})\neq\emptyset$.
If $\tilde{r}$ is such that $\mathbb{B}(\tilde{y},\tilde{r})\subset\mathring{\mathbb{B}}(\bar{y},r-(\kappa+\delta)|x_{\tau}-x_{0}|)$,
then \[
[S(x_{0})+(T+\delta)(x_{\sigma}-x_{0})]\cap\mathbb{B}(\tilde{y},\tilde{r})\neq\emptyset.\]
We can find $\tilde{y}_{\sigma}$ such that $|\tilde{y}-\tilde{y}_{\sigma}|<\tilde{r}$,
and $\tilde{y}_{\sigma}\in S(x_{0})+(T+\delta)(x_{\sigma}-x_{0})$.
Next, recall that $S(x_{0})+(T+\delta)(x_{\tau}-x_{0})=S(x_{0})+(T+\delta)(x_{\tau}-x_{\sigma})+(T+\delta)(x_{\sigma}-x_{0})$.
This means that there exists $y_{\sigma}\in S(x_{0})+(T+\delta)(x_{\tau}-x_{0})$
for which $|\tilde{y}_{\sigma}-y_{\sigma}|\leq(\kappa+\delta)|x_{\tau}-x_{\sigma}|$.
Combining the two gives \[
d\big(\tilde{y},S(x_{0})+(T+\delta)(x_{\tau}-x_{0})\big)\leq|\tilde{y}-\tilde{y}_{\sigma}|+|\tilde{y}_{\sigma}-y_{\sigma}|<\tilde{r}+(\kappa+\delta)|x_{\tau}-x_{\sigma}|.\]
Since the sum on the right hand side can be made arbitrarily small,
we have $\tilde{y}\in S(x_{0})+(T+\delta)(x_{\tau}-x_{0})$, which
contradicts $\tilde{y}\in Q$. This establishes \eqref{eq:robinson-5.1}.

If $\tau$ were less than $1$ there would be $\lambda\in(\tau,1)$
with $|x_{\lambda}-x_{\tau}|<\rho_{\tau}$, such that \begin{equation}
S(x_{\lambda})\cap\mathring{\mathbb{B}}(\bar{y},r-(\kappa+\delta)|x_{\lambda}-x_{0}|)\not\subset S(x_{0})+(T+\delta)(x_{\lambda}-x_{0}).\label{eq:robinson-5.2}\end{equation}
However, we would then have from the definition of $\rho_{\tau}$
that \[
S(x_{\lambda})\cap\mathring{\mathbb{B}}(\bar{y},r-(\kappa+\delta)|x_{\lambda}-x_{0}|)\subset S(x_{\tau})+(T+\delta)(x_{\lambda}-x_{\tau}),\]
 and so\begin{eqnarray*}
 &  & S(x_{\lambda})\cap\mathring{\mathbb{B}}(\bar{y},r-(\kappa+\delta)|x_{\lambda}-x_{0}|)\\
 & \subset & [S(x_{\tau})+(T+\delta)(x_{\lambda}-x_{\tau})]\cap\mathring{\mathbb{B}}(\bar{y},r-(\kappa+\delta)|x_{\lambda}-x_{0}|)\\
 & = & \big([S(x_{\tau})\cap\mathring{\mathbb{B}}(\bar{y},r-(\kappa+\delta)|x_{\tau}-x_{0}|)]+(T+\delta)(x_{\lambda}-x_{\tau})\big)\\
 &  & \quad\quad\cap\mathring{\mathbb{B}}(\bar{y},r-(\kappa+\delta)|x_{\lambda}-x_{0}|)\\
 & \subset & \big([S(x_{0})+(T+\delta)(x_{\tau}-x_{0})]+(T+\delta)(x_{\lambda}-x_{\tau})\big)\\
 &  & \quad\quad\cap\mathring{\mathbb{B}}(\bar{y},r-(\kappa+\delta)|x_{\lambda}-x_{0}|)\\
 & \subset & S(x_{0})+(T+\delta)(x_{\lambda}-x_{0}),\end{eqnarray*}
where the inclusion in the last line holds because $x_{0}$, $x_{\tau}$
and $x_{\lambda}$ are collinear and $T$ is a positively homogeneous
convex valued map. This contradicts \eqref{eq:robinson-5.2}, so $\tau$
must be $1$. Putting $\tau=1$ in \eqref{eq:robinson-5.1} shows
that $S(x_{1})\cap\mathring{\mathbb{B}}(\bar{y},r-(\kappa+\delta)|x_{1}-x_{0}|)\subset S(x_{0})+(T+\delta)(x_{1}-x_{0})$.
 The required conclusion follows immediately.
\end{proof}
By decreasing $r$ and/or the size of $D$, we deduce that $S$ is
pseudo strictly $T$-differentiable at $\bar{x}$ for $\bar{u}$.
This is summarized in the theorem below. 
\begin{thm}
\label{thm:strict-T-from-T}(Pseudo strict $T$-differentiability
from pseudo outer $T$-dif\-fer\-en\-ti\-a\-bil\-i\-ty) Suppose that
$S:D\rightrightarrows\mathbb{R}^{n}$ is closed-valued, osc and $\bar{y}\in S(\bar{x})$,
where $D=\dom(S)\subset X$ is convex. Let $T:X\rightrightarrows\mathbb{R}^{n}$
be a positively homogeneous closed convex valued set-valued map such
that $|T|^{+}\leq\kappa$ for some $\kappa>0$. If for any $\delta>0$,
there exists open convex sets $U$ of $\bar{x}$ and $V$ of $\bar{y}$
such that 
\begin{enumerate}
\item $S$ is pseudo outer  $(T+\delta)$-differentiable at $x$ for $y$
whenever $x\in U$, $y\in V$ and $y\in S(x)$, 
\item $\liminf_{x^{\prime}\xrightarrow[D]{}x}S(x^{\prime})\supset S(x)\cap V$
for all $x\in U$. (This is true when $S$ is inner semicontinuous.)
\end{enumerate}
then $S$ is pseudo strictly $T$-differentiable at $\bar{x}$ for
$\bar{y}$. Furthermore, the conclusion still holds if we weaken condition
(1) to:
\begin{enumerate}
\item [(1')]$S$ is pseudo outer $(T+\delta)$-differentiable at $x$ for
$y$ whenever $x\in U$, $y\in V$ and $y\in S(x)$ but $(x,y)\neq(\bar{x},\bar{y})$.
\end{enumerate}
\end{thm}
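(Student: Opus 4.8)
The plan is to derive both assertions from Lemma \ref{lem:local_Aubin_1}, which already contains the delicate continuation argument; what remains is to feed it the correct data, manage the slack constants, and add one limiting argument to cover the base point in the second formulation.

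For the main statement, fix $\epsilon>0$ and apply the hypothesis with $\delta=\epsilon/2$ to obtain open convex neighborhoods $U$ of $\bar x$ and $V$ of $\bar y$ for which (1) and (2) hold. Put $\hat T:=T+\epsilon/2$, which is positively homogeneous, closed valued (a closed set plus the compact ball $\frac{\epsilon}{2}|w|\mathbb B$ is closed) and satisfies $|\hat T|^{+}\le\kappa+\frac{\epsilon}{2}=:\hat\kappa$. Shrinking $V$ to a ball $\mathbb B(\bar y,r)\subset V$ and restricting to the convex set $U\cap D$, conditions (1) and (2) become exactly hypotheses (2) and (1) of Lemma \ref{lem:local_Aubin_1} for $\hat T$; note that condition (2) of the theorem only gets easier when the approach set shrinks from $D$ to $U\cap D$. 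Applying the lemma with its own slack equal to $\epsilon/2$ gives, for all $x_0,x_1\in U\cap D$ and $r'$ with $(\kappa+\epsilon)|x_0-x_1|+r'<r$,
\[
S(x_1)\cap\mathbb B(\bar y,r')\subset S(x_0)+(T+\epsilon)(x_1-x_0),
\]
since $(\hat T+\frac{\epsilon}{2})(w)=(T+\epsilon)(w)$ and $\hat\kappa+\frac{\epsilon}{2}=\kappa+\epsilon$. Taking $r'=r/2$ and a ball $V_s=\mathbb B(\bar x,\gamma)\subset U$ with $2(\kappa+\epsilon)\gamma<r/2$ makes the constraint valid for every pair $x_0,x_1\in V_s\cap D$, which is precisely pseudo strict $T$-differentiability at $\bar x$ for $\bar y$ with window $W_s=\mathbb B(\bar y,r/2)$; as $\epsilon>0$ was arbitrary, this finishes the first part.

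For the ``furthermore'' part the obstruction is that (1a) provides pseudo outer $(T+\delta)$-differentiability at every nearby base point \emph{except} $(\bar x,\bar y)$, so Lemma \ref{lem:local_Aubin_1} may not be invoked on a convex domain containing $\bar x$. My plan is to recover pseudo outer $(T+\delta')$-differentiability at $(\bar x,\bar y)$ from (1a), (1b), (2), and then quote the first part. Fix $x\ne\bar x$ near $\bar x$ and $y\in S(x)\cap\mathbb B(\bar y,\rho)$ for a small $\rho$, and for $t_0\in(0,1)$ set $x_{t_0}:=\bar x+t_0(x-\bar x)$. The segment $[x_{t_0},x]$ is convex, lies in $D$, and avoids $\bar x$, so (1a) supplies the differentiability hypothesis on it while (2) supplies the inner-limit hypothesis; since the proof of Lemma \ref{lem:local_Aubin_1} uses only convexity of the domain and the two window conditions relative to $\bar y$ (not the membership $\bar y\in S(\bar x)$, and no point of $[x_{t_0},x]$ need map to $\bar y$), it applies on $[x_{t_0},x]$ and yields, via positive homogeneity and $x-x_{t_0}=(1-t_0)(x-\bar x)$,
\[
y\in S(x_{t_0})+(T+\delta')(x-x_{t_0})=S(x_{t_0})+(1-t_0)(T+\delta')(x-\bar x),
\]
where $\delta'$ collects the slack from (1a) and the lemma. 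Now let $t_0\searrow0$: writing $y=s_{t_0}+w_{t_0}$ with $s_{t_0}\in S(x_{t_0})$ and $w_{t_0}\in(1-t_0)(T+\delta')(x-\bar x)\subset(\kappa+\delta')|x-\bar x|\mathbb B$, the points $w_{t_0}$ lie in a bounded subset of $\mathbb R^n$, so along a subsequence $w_{t_0}\to w^{*}\in(T+\delta')(x-\bar x)$ (using $\limsup_{t\to0}(1-t)K\subset K$ for closed $K$). Then $s_{t_0}=y-w_{t_0}\to y-w^{*}$ with $x_{t_0}\to\bar x$, and outer semicontinuity (1b) forces $y-w^{*}\in S(\bar x)$; hence $y\in S(\bar x)+(T+\delta')(x-\bar x)$, establishing the missing pseudo outer $(T+\delta')$-differentiability at the base point.

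Combining this with (1a), hypothesis (1) of the first part holds for the map $T$ with slack $\delta'$ at every base point near $(\bar x,\bar y)$, so the already-proved first part yields pseudo strict $T$-differentiability; since $\delta'$ may be taken arbitrarily small, the quantification over $\epsilon>0$ is unaffected. The routine work is the bookkeeping of the three slacks — the hypothesis $\delta$, the lemma's internal slack, and the $\delta'$ from recovery — all of which tend to $0$. The one genuinely delicate step is the base-point recovery: one must approach $\bar x$ strictly along the ray so that every auxiliary segment avoids the single point at which differentiability is unavailable, and then use both the finite dimensionality of $\mathbb R^n$ (to extract $w^{*}$) and outer semicontinuity (to return the residual to $S(\bar x)$). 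This is exactly where (1b) substitutes for the missing differentiability at $(\bar x,\bar y)$.
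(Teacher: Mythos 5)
Your proposal is correct and follows essentially the same route as the paper: the first part is the same application of Lemma \ref{lem:local_Aubin_1} with slack bookkeeping, and the ``furthermore'' part is handled, as in the paper, by recovering pseudo outer differentiability at the base point $(\bar{x},\bar{y})$ via the lemma applied on a convex set along the segment $[\bar{x},x]$ that avoids $\bar{x}$, followed by an appeal to outer semicontinuity. The only (cosmetic) difference is the limiting step: you let $x_{t_0}\to\bar{x}$, extract a convergent subsequence $w_{t_0}\to w^{*}$ and use the outer-limit form of (1b), whereas the paper fixes an intermediate point $\hat{x}$, uses the $\epsilon$-enlargement form of outer semicontinuity, and concludes via closedness of $S(\bar{x})+(T+2\delta)(x-\bar{x})$.
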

\begin{proof}
Choose any $\delta>0$. There are neighborhoods $U$ containing $\bar{x}$
and $\mathbb{B}(\bar{y},r)$ containing $\bar{y}$ such that $S$
is pseudo outer $(T+\delta)$-differentiable at $x$ for $y$ for
all $x\in U$, $y\in\mathbb{B}(\bar{y},r)$. We can reduce the size
of $U$ so that the diameter of $U$, say $d$, satisfies $r-(\kappa+2\delta)d>0$.
Let $r^{\prime}=\frac{1}{2}[r-(\kappa+2\delta)d]$. By Lemma \ref{lem:local_Aubin_1},
\[
S(x_{1})\cap\mathbb{B}(\bar{y},r^{\prime})\subset S(x_{0})+(T+2\delta)(x_{1}-x_{0})\mbox{ for all }x_{0},x_{1}\in U\cap D.\]
Since $\delta$ is arbitrary, $S$ is pseudo strictly $T$-differentiable
at $\bar{x}$ for $\bar{y}$ as needed.

To prove the second part, we only need to prove that if condition
(1') holds, then $S$ is pseudo outer $(T+2\delta)$-differentiable
at $\bar{x}$ for $\bar{y}$. Again, suppose we have neighborhoods
$U$ of $\bar{x}$ and $V=\mathbb{B}(\bar{y},r)$ of $\bar{y}$ respectively
such that condition (1') holds. First, we prove that for all $x\in\mathbb{B}(\bar{x},\frac{r}{2(\kappa+2\delta)})\cap U\cap D$,
\begin{equation}
S(x)\cap\mathbb{B}\left(\bar{y},\frac{r}{2}\right)\subset S(\bar{x})+(T+2\delta)(x-\bar{x}).\label{eq:calm_1}\end{equation}
Since $S$ is outer semicontinuous at $\bar{x}$, for any $\epsilon>0$,
there exists a convex combination of $\{x,\bar{x}\}$ arbitrarily
close enough to $\bar{x}$, say $\hat{x}$, such that \[
S(\hat{x})\cap\mathbb{B}\left(\bar{y},\frac{r}{2}+(\kappa+2\delta)|x-\bar{x}|\right)\subset S(\bar{x})+\epsilon\mathbb{B}.\]
Choose the domain $D^{\prime}$ to be $\mathbb{B}(\frac{1}{2}(\hat{x}+x),\frac{1}{2}|\hat{x}-x|)$.
Both $x,\hat{x}$ are in $D^{\prime}$, which is convex, and $\frac{r}{2}+(\kappa+2\delta)|x-\hat{x}|<r$,
so the conditions for Lemma \ref{lem:local_Aubin_1} are satisfied,
and we have \[
S(x)\cap\mathbb{B}\left(\bar{y},\frac{r}{2}\right)\subset S(\hat{x})+(T+2\delta)(x-\hat{x}).\]
Then \begin{eqnarray}
S(x)\cap\mathbb{B}\left(\bar{y},\frac{r}{2}\right) & \subset & [S(\hat{x})+(T+2\delta)(x-\hat{x})]\cap\mathbb{B}\left(\bar{y},\frac{r}{2}\right)\nonumber \\
 & = & \left(\left[S(\hat{x})\cap\mathbb{B}\left(\bar{y},\frac{r}{2}+(\kappa+2\delta)|x-\bar{x}|\right)\right]+(T+2\delta)(x-\hat{x})\right)\cap\mathbb{B}\left(\bar{y},\frac{r}{2}\right)\nonumber \\
 & \subset & \left[S(\hat{x})\cap\mathbb{B}\left(\bar{y},\frac{r}{2}+(\kappa+2\delta)|x-\bar{x}|\right)\right]+(T+2\delta)(x-\hat{x})\nonumber \\
 & \subset & [S(\bar{x})+\epsilon\mathbb{B}]+(T+2\delta)(x-\hat{x})\nonumber \\
 & = & S(\bar{x})+(T+2\delta)(x-\hat{x})+\epsilon\mathbb{B}.\label{eq:sec-8-fin1}\end{eqnarray}
To continue in proving \eqref{eq:calm_1}, we need to show that \begin{equation}
(T+2\delta)(x-\hat{x})\subset(T+2\delta)(x-\bar{x})+(\kappa+2\delta)|\hat{x}-\bar{x}|\mathbb{B}.\label{eq:sec-8-fin2}\end{equation}
Since $x$, $\hat{x}$ and $\bar{x}$ are collinear, we can write
$\hat{x}-\bar{x}$ as $\lambda(x-\bar{x})$, where $0<\lambda<1$.
Suppose $w\in(T+2\delta)(x-\hat{x})$, or $\frac{1}{1-\lambda}w\in(T+2\delta)(x-\bar{x})$.
Then \begin{eqnarray*}
w & = & \frac{1}{1-\lambda}w-\frac{\lambda}{1-\lambda}w\\
 & \in & (T+2\delta)(x-\bar{x})-\lambda(T+2\delta)(x-\bar{x})\\
 & = & (T+2\delta)(x-\bar{x})-(T+2\delta)(\hat{x}-\bar{x})\\
 & \subset & (T+2\delta)(x-\bar{x})+(\kappa+2\delta)|\hat{x}-\bar{x}|\mathbb{B}.\end{eqnarray*}
So \eqref{eq:sec-8-fin2} holds.

Since $\epsilon$ and $|\hat{x}-\bar{x}|$ can be made arbitrarily
small, we have, from \eqref{eq:sec-8-fin1} and \eqref{eq:sec-8-fin2},
$S(x)\cap\mathbb{B}(\bar{y},\frac{r}{2})\subset S(\bar{x})+(T+2\delta)(x-\bar{x})$
as claimed. As $x$ is arbitrary in $\mathbb{B}(\bar{x},\frac{r}{2(\kappa+2\delta)})\cap U\cap D$,
this means that $S$ is pseudo outer $(T+2\delta)$-differentiable
at $\bar{x}$ for $\bar{y}$, and we are done.
\end{proof}
The corollary below addresses calmness and Lipschitz continuity. We
did not explicitly treat the case where either the Lipschitz or calmness
moduli could be infinity, but this is still easy.
\begin{cor}
\label{cor:lip=00003Dlimsup_calm}(Calmness and Lipschitz moduli)
Suppose that $S:D\rightrightarrows\mathbb{R}^{n}$ is closed valued,
osc with $D=\dom(S)\subset X$ and $\bar{y}\in S(\bar{x})$. We have
\[
\lip\, S(\bar{x}\mid\bar{y})\geq\limsup_{(x,y)\xrightarrow[\scriptsize\gph(S)]{}(\bar{x},\bar{y})}\calm\, S(x\mid y).\]
If $D$ is locally convex at $\bar{x}$, and there exist neighborhoods
$V$ of $\bar{x}$ and $W$ of $\bar{y}$ such that $\liminf_{x^{\prime}\xrightarrow[X]{}x}S(x^{\prime})\supset S(x)\cap W$
for all $x\in V$ (which is the case when $S$ is inner semicontinuous),
then equality holds. In addition, we also have \[
\lip\, S(\bar{x}\mid\bar{y})=\limsup_{{(x,y)\xrightarrow[\scriptsize\gph(S)]{}(\bar{x},\bar{y})\atop (x,y)\neq(\bar{x},\bar{y})}}\calm\, S(x\mid y).\]

\end{cor}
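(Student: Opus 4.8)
The strategy is to read the inequality off the definitions and to derive both equalities from Theorem~\ref{thm:strict-T-from-T}, applied with the positively homogeneous, closed and convex valued map $T(w):=\kappa|w|\mathbb{B}$, for which $|T|^{+}=\kappa$ and $(T+\delta)(w)=(\kappa+\delta)|w|\mathbb{B}$. Since the Aubin property and calmness are local, I first restrict $S$ to a convex neighborhood of $\bar{x}$, so that the domain is convex as required by that theorem. For the inequality, assume $\lip\,S(\bar{x}\mid\bar{y})=\kappa<\infty$ (otherwise there is nothing to prove). Given $\kappa^{\prime}>\kappa$, there are neighborhoods $V$ of $\bar{x}$ and $W$ of $\bar{y}$ with $S(x)\cap W\subset S(x^{\prime})+\kappa^{\prime}|x-x^{\prime}|\mathbb{B}$ for all $x,x^{\prime}\in V$. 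Shrinking $V,W$ so that $\bar{x},\bar{y}$ lie in their interiors, every $(x,y)\in\gph(S)$ near $(\bar{x},\bar{y})$ has $x,y$ in the interiors of $V,W$, and the displayed inclusion (with $x,x^{\prime}$ interchanged) holds for $x^{\prime}$ in a neighborhood of $x$, whence $\calm\,S(x\mid y)\leq\kappa^{\prime}$. Taking the limsup and letting $\kappa^{\prime}\downarrow\kappa$ gives $\lip\,S(\bar{x}\mid\bar{y})\geq\limsup_{(x,y)\to(\bar{x},\bar{y})}\calm\,S(x\mid y)$.

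Now suppose $D$ is locally convex at $\bar{x}$ and the inner-limit hypothesis holds, and write $L$ for the full limsup. It suffices to prove $\lip\,S(\bar{x}\mid\bar{y})\leq L$, and I may assume $L<\infty$. Fix $\kappa>L$ and set $T(w)=\kappa|w|\mathbb{B}$. By the definition of the limsup there are open convex neighborhoods $U$ of $\bar{x}$ and $V$ of $\bar{y}$ with $\calm\,S(x\mid y)<\kappa$ for all $(x,y)\in\gph(S)$ satisfying $x\in U$, $y\in V$. By Definition~\ref{def:calm-aubin}(1) this is precisely pseudo outer $(T+\delta)$-differentiability at $x$ for $y$ (the calmness modulus is $<\kappa\leq\kappa+\delta$), so condition~(1) of Theorem~\ref{thm:strict-T-from-T} holds on these fixed $U,V$ for every $\delta>0$. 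The inner-limit hypothesis supplies condition~(2), since the $X$-inner limit is contained in the $D$-inner limit. Theorem~\ref{thm:strict-T-from-T} then yields that $S$ is pseudo strictly $T$-differentiable at $\bar{x}$ for $\bar{y}$, which by Definition~\ref{def:calm-aubin}(2) is the Aubin property with modulus $\kappa$, i.e.\ $\lip\,S(\bar{x}\mid\bar{y})\leq\kappa$; letting $\kappa\downarrow L$ proves the equality.

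For the punctured statement assume in addition that $S$ is outer semicontinuous at $\bar{x}$, and write $L^{\prime}$ for the punctured limsup. Because every neighborhood of $(\bar{x},\bar{y})$ contains $(\bar{x},\bar{y})$, one has $L=\max\{\calm\,S(\bar{x}\mid\bar{y}),L^{\prime}\}$; together with $\calm\,S(\bar{x}\mid\bar{y})\leq\lip\,S(\bar{x}\mid\bar{y})$ and the inequality already proved, this gives $\lip\,S(\bar{x}\mid\bar{y})\geq L\geq L^{\prime}$, so only $\lip\,S(\bar{x}\mid\bar{y})\leq L^{\prime}$ remains. Repeating the previous paragraph with $\kappa>L^{\prime}$, the bound $\calm\,S(x\mid y)<\kappa$ is now guaranteed only for $(x,y)\neq(\bar{x},\bar{y})$, which is exactly condition~(1a), while outer semicontinuity at $\bar{x}$ is condition~(1b); the ``furthermore'' part of Theorem~\ref{thm:strict-T-from-T} again gives pseudo strict $T$-differentiability and hence $\lip\,S(\bar{x}\mid\bar{y})\leq\kappa$, and $\kappa\downarrow L^{\prime}$ finishes. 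The cases with infinite moduli follow by contraposition, since these arguments force $\lip\,S(\bar{x}\mid\bar{y})$ to be finite whenever the relevant limsup is.

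The analytic content is carried entirely by Theorem~\ref{thm:strict-T-from-T} (and through it Lemma~\ref{lem:local_Aubin_1}); the work here is bookkeeping. The one step that needs genuine care is the punctured statement: one must notice that discarding the base point is harmless precisely because $\calm\,S(\bar{x}\mid\bar{y})\leq\lip\,S(\bar{x}\mid\bar{y})$, and that the resulting loss of calmness information at $(\bar{x},\bar{y})$ is repaired exactly by the outer semicontinuity hypothesis through conditions~(1a)--(1b).
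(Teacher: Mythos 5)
Your proposal is correct and takes essentially the same route the paper intends: the paper states this corollary without any explicit proof, leaving it as an immediate consequence of Theorem \ref{thm:strict-T-from-T}, and your argument is exactly that instantiation with $T(w)=\kappa|w|\mathbb{B}$, together with the bookkeeping the paper leaves implicit (localizing to a convex piece of $D$, the containment of the $X$-inner limit in the $D$-inner limit, the first inequality directly from Definition \ref{def:calm-aubin}, and the punctured case via conditions (1a)--(1b) and outer semicontinuity). I see no gaps.
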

In the single-valued case, we have the following corollary.
\begin{cor}
\label{cor:Single-valued-T-diff}(Single-valued functions) Let $f:D\rightarrow\mathbb{R}^{n}$
be continuous, where $D\subset X$ is convex. 
\begin{enumerate}
\item Let $T:X\rightrightarrows\mathbb{R}^{n}$ be a closed convex valued
positively homogeneous map such that $|T|^{+}$ is finite. The function
$f$ is strictly $T$-differentiable at $\bar{x}$ if and only if
for all $\delta>0$, there is a convex neighborhood $U$ of $\bar{x}$
such that $f$ is $(T+\delta)$-differentiable at all points in $U\cap D$.
\item For any $\bar{x}\in D$, \[
\lip f(\bar{x})=\limsup_{{x\xrightarrow[D]{}\bar{x}\atop x\neq\bar{x}}}\calm\, f(x).\]

\end{enumerate}
\end{cor}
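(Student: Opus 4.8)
The plan is to derive both parts by specializing the general set-valued results already established, namely Theorem \ref{thm:strict-T-from-T} and Corollary \ref{cor:lip=00003Dlimsup_calm}, and to check that all the auxiliary hypotheses are automatic for a continuous single-valued map. Two facts I would record at the outset are: (i) for a single-valued map the notions of pseudo outer, pseudo strict, outer and strict $T$-differentiability collapse to ordinary (strict) $T$-differentiability, because any window $W$ around $\bar{y}=f(\bar{x})$ contains $f(x)$ for all $x$ close to $\bar{x}$ by continuity, so the intersection $S(x)\cap W$ is harmless; and (ii) a continuous single-valued $f$ is both inner and outer semicontinuous, and is closed-valued since singletons are closed.

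For part (1), I would treat the two implications separately. The forward implication is immediate from the definitions: if $f$ is strictly $T$-differentiable at $\bar{x}$, then for each $\delta>0$ there is a neighborhood $V$ with $f(x)\in f(x')+T(x-x')+\delta|x-x'|\mathbb{B}$ for all $x,x'\in V$; freezing $x'$ at any point of $V$ and recalling $(T+\delta)(w)=T(w)+\delta|w|\mathbb{B}$ shows $f$ is $(T+\delta)$-differentiable at that point, hence throughout $V\cap D$. For the reverse implication I would invoke Theorem \ref{thm:strict-T-from-T} with $S=f$. Its standing hypotheses, $D=\dom(f)$ convex and $|T|^{+}\le\kappa$, are exactly those assumed here; condition (2) of that theorem (the $\liminf$ inclusion) holds by inner semicontinuity of the continuous $f$; and condition (1) is precisely the given statement that $f$ is $(T+\delta)$-differentiable, i.e.\ by fact (i) pseudo outer $(T+\delta)$-differentiable at each $x$ for $f(x)$, on $U\cap D$. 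The theorem then yields pseudo strict $T$-differentiability at $\bar{x}$ for $f(\bar{x})$, which by fact (i) is just strict $T$-differentiability of $f$.

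For part (2), I would apply Corollary \ref{cor:lip=00003Dlimsup_calm} to the closed-valued map $S=f$. Since $D$ is convex (hence locally convex at $\bar{x}$) and $f$ is inner semicontinuous, the equality $\lip S(\bar{x}\mid\bar{y})=\limsup_{(x,y)\xrightarrow[\scriptsize\gph(f)]{}(\bar{x},\bar{y})}\calm\, S(x\mid y)$ holds; and since $f$ is in addition outer semicontinuous, the sharper version excluding the base point $(\bar{x},\bar{y})$ also holds. It then remains to translate these modulus identities into single-valued language: because $f(\bar{x})$ is the only point of $S(\bar{x})$, one has $\lip f(\bar{x})=\lip f(\bar{x}\mid f(\bar{x}))$ and $\calm\, f(x)=\calm\, f(x\mid f(x))$, and convergence $(x,y)\xrightarrow[\scriptsize\gph(f)]{}(\bar{x},\bar{y})$ with $y=f(x)$ reduces to $x\xrightarrow[D]{}\bar{x}$ by continuity, giving the stated formula with $x\ne\bar{x}$.

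The substantive content has all been carried out in the general theorems, so I expect no genuine obstacle; the only care needed is bookkeeping. Specifically, I would double-check that the window restriction in the pseudo notions really is vacuous for single-valued maps (fact (i)), so that ``$(T+\delta)$-differentiable at $x$'' and ``pseudo outer $(T+\delta)$-differentiable at $x$ for $f(x)$'' may be used interchangeably, and that the two moduli $\lip f(\bar{x})$ and $\lip f(\bar{x}\mid f(\bar{x}))$ (respectively the two calmness moduli) genuinely agree. A minor additional point, flagged in the remark preceding the corollary, is the degenerate case $\lip f(\bar{x})=\infty$: there the inequality $\ge$ is trivial and the reverse can be argued by the same scheme, but this does not affect the main argument.
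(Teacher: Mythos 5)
Your proposal is correct and takes essentially the same route as the paper: the paper states this corollary without a separate proof, presenting it as the direct specialization of Theorem \ref{thm:strict-T-from-T} and Corollary \ref{cor:lip=00003Dlimsup_calm} to single-valued continuous maps, which is precisely your argument. Your two bookkeeping facts — that the pseudo notions collapse to the plain ones for continuous single-valued maps, and that closedness, inner and outer semicontinuity are automatic — are exactly the observations the paper implicitly relies on.
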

\begin{acknowledgement*}
I thank the associate editor and the two anonymous referees for their
careful reading and insightful comments which have made the paper
much better than it was. I also thank Asen Dontchev, Diethard Klatte,
Bernd Kummer, Adrian Lewis and Boris Mordukhovich for comments on
a previous version of this paper which have led to improvements, and
on the bibliography on the subject. In particular, I thank Lionel
Thibault for bringing to my attention the papers \cite{Ioffe79,Iof81,Thi82},
and to Alexander Kruger for several suggestions. Much of this paper
was written while I was in the Fields Institute in Toronto, which
has provided a wonderful environment for working on this paper.\end{acknowledgement*}

\end{document}